\newcommand{\PP}{{\mathbb{P}}}
\newcommand{\QQ}{{\mathbb{Q}}}
\DeclareMathOperator{\dom}{dom}
\DeclareMathOperator{\supp}{supp}
\def\MPB{{\mathbb{P}}}
\def\MQB{{\mathbb{Q}}}
\def\k{\kappa}
\newtheorem{theorem}{Theorem}[section]
\newtheorem{lemma}[theorem]{Lemma}
\newtheorem{corollary}[theorem]{Corollary}
\newtheorem{definition}[theorem]{Definition}
\newtheorem{remark}[theorem]{Remark}
\newtheorem{claim}[theorem]{Claim}
\numberwithin{equation}{section}
\def\MPB{{\mathbb{P}}}
\def\MQB{{\mathbb{Q}}}
\def\k{\kappa}
\def\rmark{\mbox{$\rm\bf\rule{0.06em}{1.45ex}\kern-0.05em R$}}
\def\pmark{\mbox{$\rm\bf\rule{0.06em}{1.45ex}\kern-0.05em P$}}
\def\nmark{\mbox{$\rm\bf\rule{0.06em}{1.45ex}\kern-0.05em N$}}
\def\vdash{\mbox{$\rm\| \kern-0.13em -$}}
\newcommand{\lusim}[1]{\smash{\underset{\raisebox{1.2pt}[0cm][0cm]{$\sim$}}
{{#1}}}}
\def\rmark{\mbox{$\rm\bf\rule{0.06em}{1.45ex}\kern-0.05em R$}}
\def\pmark{\mbox{$\rm\bf\rule{0.06em}{1.45ex}\kern-0.05em P$}}
\def\nmark{\mbox{$\rm\bf\rule{0.06em}{1.45ex}\kern-0.05em N$}}
\def\vdash{\mbox{$\rm\| \kern-0.13em -$}}
\begin{document}

\title[On $C^s_n(\kappa)$ and the Juhasz-Kunen question]{On $C^s_n(\kappa)$ and the Juhasz-Kunen question}

\author[M. Golshani and S. Shelah]{Mohammad Golshani and Saharon Shelah}

\thanks{The first author's research has been supported by a grant from IPM (No. 96030417). The second
author's research has been partially supported by European Research Council grant 338821. Publication 1125 on Shelah's list.}
\thanks{The authors thank Ashutosh Kumar  for  many useful comments and corrections.}
 \maketitle




\begin{abstract}
We generalize the combinatorial principles  $C_n(\kappa), C^s_n(\kappa)$ and $Princ(\kappa)$ introduced by various authors, and prove some of their properties and connections between them. We also answer a question asked by Juhasz-Kunen about the relation between these principles, by showing that $C_n(\kappa)$ does not imply $C_{n+1}(\k)$, for any $n>2$. We also show  the consistency of $C(\kappa)+\neg C^s(\kappa).$
\end{abstract}

\section{introduction}
In this paper, we consider some combinatorial principles introduced in  \cite{brendle-fuchino}, \cite{fuchino-geschke}, \cite{juhasz-kunen}, \cite{juhasz-soukup1}, \cite{juhasz-soukup2} and \cite{shelah}, present some generalization of them and prove some of their properties and connections between them. We also answer a question asked by Juhasz-Kunen \cite{juhasz-kunen} about the relation between these principles.

The work in this direction, has started by the work of Juhasz-Soukup-Szentmiklossy \cite{juhasz-soukup1}, where the authors introduced several combinatorial principles, which all hold in the Cohen-real generic extensions. Among other things, in particular they introduced the combinatorial principles $C^s(\kappa)$, $C(\kappa),$ and their restrictions $C_m^s(\kappa)$ and $C_m(\kappa),$ for  $m<\omega.$ The work of Juhasz-Kunen \cite{juhasz-kunen} has continued the work, by introducing some extra principles, like $SEP$, and discussing their relations. In particular, Juhasz and Kunen showed that $SEP  \Rightarrow C^s_2(\aleph_2)$, while $C^s(\aleph_2) \nRightarrow SEP.$
On the other hand, in \cite{shelah}, Shelah introduced a new combinatorial principle $Princ(\kappa),$ which is weaker than $SEP$, but still enough strong to imply $C^s(\kappa).$

It turned out that these combinatorial principles are very useful, and have many applications, in particular in topology and the study of cardinal invariants, see \cite{brendle-fuchino}, \cite{fuchino-geschke} and \cite{juhasz-soukup2}.

The question of the difference between $C^s_n(\kappa)$ and $C^s_{n+1}(\kappa)$ remained open by Juhasz-Kunen \cite{juhasz-kunen} , and was also  asked  by Juhasz during the Beer-Sheva 2001 conference.

In this paper we consider some of these combinatorial principles, and present a natural generalization of them. We discuss the relation between them and also their consistency. We also address the above mentioned question of Juhasz-Kunen in section 5, and give a complete solution to it. In the last section, we discuss the relation between $C^s(\kappa)$ and $C(\kappa),$ and show the consistency of $C(\kappa)+\neg C^s(\kappa).$

\section{On $Princ(\kappa)$ and its generalizations}
In this section, we consider the combinatorial principle $Prin(\kappa)$ introduced by Shelah \cite{shelah}, and present some of its generalizations.
\begin{definition}
Let $\kappa$ be regular uncountable, $A \supseteq \kappa$, and let $D$ be a filter on $[A]^{<\kappa}.$ $D$ is called  normal  if
\begin{itemize}
\item [$(1)$] For all $a\in [A]^{<\kappa}, \{b\in [A]^{<\kappa}: a \subseteq b  \}\in D,$

\item [$(2)$] If for $x\in A, A_x\in D,$ then $\bigtriangleup_{x\in A}A_x\in D,$ where
\begin{center}
$\bigtriangleup_{x\in A}A_x=\{a\in [A]^{<\kappa}: \forall x\in a, a\in A_x \}.$
\end{center}
\end{itemize}
\end{definition}
It is easily seen that if $D$ is a normal filter on $[A]^{<\kappa}, X\neq \emptyset$ mod $D$ and if $F:X \rightarrow A$ is regressive, i.e.,  for all non-empty $a\in [A]^{<\kappa}, F(a)\in a,$ then there are $Y \subseteq X, Y \neq \emptyset$ mod $D$ and  $x\in A$ such that for all $a\in Y, F(a)=x.$ To see this, assume on the contrary that for each $x\in A,$ there exists $Y_x \in D$ such that $Y_x \cap \{a\in X: F(a)=x    \}=\emptyset.$ Let $Y=\bigtriangleup_{x\in A}Y_x.$ Then $Y\in D$ and so $Y\cap X \neq \emptyset$ (as $X\neq \emptyset$ mod $D$). Let $a\in Y\cap X$ and $F(a)=x.$ Then $a\in Y_x \cap \{a\in X: F(a)=x    \},$ a contradiction.
\begin{definition}
Let $\kappa$ be regular uncountable. $\mathfrak{D}$ is a $\kappa$-definition of normal filters, if
\begin{itemize}
\item [$(1)$] For each $A \supseteq \kappa, \mathfrak{D}(A)$ is a normal filter on $[A]^{<\kappa},$

\item [$(2)$] If $\kappa \subseteq A_1 \subseteq A_2,$ then $\mathfrak{D}(A_1)=\{ \{a\cap A_1: a\in X \}: X\in \mathfrak{D}(A_2) \}.$
\end{itemize}
\end{definition}
\begin{definition}
Let $\kappa$ be regular uncountable, $\theta< \lambda \leq \kappa$ and let  $\chi > \kappa$ be large enough regular. Then
\begin{itemize}
\item [$(a)$] $\mathbf{N}^1_{\kappa, \lambda, \chi }$ consists of those $N \prec (H(\chi), \in)$ such that:
\begin{enumerate}
\item $|N|\leq N\cap \kappa\in \kappa,$
\item For all $a\in P(\omega),$ there exists $P\in N,$ such that $P \subseteq P(\omega), |P|< \min\{|N|^+, \lambda\},$ and for all $b\in P(\omega)\cap N, a \subseteq b \Rightarrow \exists c\in P, a \subseteq c \subseteq b$ (such a $P$ is called an $N$-witness for $a$).
\end{enumerate}
\item [$(b)$] $\mathbf{N}^2_{\kappa, \lambda, \theta,  \chi }$ consists of those $N \in \mathbf{N}^1_{\kappa, \lambda, \chi }$ such that for any $\theta$-sequence $\langle a_\xi: \xi<\theta \rangle$ of subsets of $\omega,$ there is some $P\in N,$ $P \subseteq P(\omega), |P|< \min\{|N|^+, \lambda\},$ such that $P$ is an $N$-witness for all $a_\xi, \xi<\theta$ simultaneously.

\item [$(c)$] $\mathbf{N}^3_{\kappa, \lambda, \theta,  \chi }$ consists of those $N \in \mathbf{N}^1_{\kappa, \lambda, \chi }$ such that for each $Y\in [N]^{\theta},$ there exists some $Z\in N, |Z|< \min\{|N|^+, \lambda  \}$ such that $Y \subseteq Z.$
\end{itemize}
\end{definition}
We now state our generalization of $Princ(\kappa)$
\begin{definition}
Let $\theta<\lambda\leq \kappa$ and $\mathfrak{D}$ be as above.
\begin{itemize}
\item [$(a)$] $Princ_1(\kappa, \lambda, \mathfrak{D})$ sates: for all large enough $\chi> \kappa,$
\begin{center}
$\mathbf{N}^1_{\kappa, \lambda, \chi } \neq \emptyset$ mod $\mathfrak{D}(H(\chi))$.
\end{center}

\item [$(b)$] $Princ_{l,\theta}(\kappa, \lambda, \mathfrak{D})$ (for $l=2,3$) sates: for all large enough $\chi> \kappa,$
\begin{center}
$\mathbf{N}^l_{\kappa, \lambda, \theta, \chi } \neq \emptyset$ mod $\mathfrak{D}(H(\chi))$.
\end{center}
\end{itemize}
\end{definition}
\begin{remark}
\begin{itemize}
\item [$(a)$] Let $\kappa$ be regular uncountable, and for $A \supseteq \kappa$ let $\mathfrak{D}(A)$ be the club filter on $[A]^{<\kappa}.$ Then our $Princ_1(\kappa, \lambda, \mathfrak{D})$ is just $Princ(\kappa, \lambda)$ from \cite{brendle-fuchino}. Also note that Shelah's $Princ(\kappa)$ is $Princ(\kappa, \kappa).$

\item [$(b)$] If $\theta<\lambda \leq \lambda' \leq \kappa,$ then $Princ_1(\kappa, \lambda, \mathfrak{D}) \Rightarrow Princ_1(\kappa, \lambda', \mathfrak{D})$ and
$Princ_{l,\theta}(\kappa, \lambda, \mathfrak{D}) \Rightarrow$

$\hspace{0.5cm}$$Princ_{l,\theta}(\kappa, \lambda',  \mathfrak{D})$ (for $l=2,3$).

\item [$(c)$] If $\theta \leq \theta' < \lambda \leq \kappa,$ then
$Princ_{l,\theta'}(\kappa, \lambda, \mathfrak{D}) \Rightarrow Princ_{l,\theta}(\kappa, \lambda, \mathfrak{D})$ (for $l=2,3$).

\item [$(d)$] If $\lambda=\mu^+$ is a successor cardinal, then we can replace $\min\{|N|^+, \lambda  \}$ by $\lambda.$
\end{itemize}
\end{remark}
The next lemma follows from the definition, and the fact that we can code an $\omega$-sequence of subsets of $\omega$ into a subset of $\omega$.
\begin{lemma}
Let $\theta<\lambda\leq \kappa$ and $\mathfrak{D}$ be as above. Then
\begin{itemize}
\item [$(a)$] $Princ_{3,\theta}(\kappa, \lambda, \mathfrak{D}) \Rightarrow Princ_{2,\theta}(\kappa, \lambda, \mathfrak{D}) \Rightarrow Princ_1(\kappa, \lambda, \mathfrak{D}).$

\item [$(b)$] $Princ_{2,\omega}(\kappa, \lambda, \mathfrak{D}) \Leftrightarrow Princ_1(\kappa, \lambda, \mathfrak{D}).$
\end{itemize}
\end{lemma}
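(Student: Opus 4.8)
The plan is to reduce each implication between the principles to a statement about the witness classes $\mathbf N^1,\mathbf N^2,\mathbf N^3$, using the trivial observation that if $X\subseteq X'$ and $X$ is $\mathfrak D(H(\chi))$-positive then so is $X'$. Thus $Princ_{2,\theta}(\kappa,\lambda,\mathfrak D)\Rightarrow Princ_1(\kappa,\lambda,\mathfrak D)$ is immediate, since $\mathbf N^2_{\kappa,\lambda,\theta,\chi}\subseteq\mathbf N^1_{\kappa,\lambda,\chi}$ literally by the defining clause, so positivity of the former forces positivity of the latter; and the forward direction of $(b)$ is the case $\theta=\omega$ of the chain in $(a)$. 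It remains to prove $Princ_{3,\theta}\Rightarrow Princ_{2,\theta}$ and the reverse implication of $(b)$. For both I first record, by normality of $\mathfrak D(H(\chi))$, that on a $\mathfrak D$-large set I may assume good behaviour of parameters: applying clause $(1)$ in the definition of a normal filter to the finite set $\{\theta,\lambda,\kappa\}$ and to a fixed $A_0\in[H(\chi)]^{\theta}$ shows that $B:=\{N:\{\theta,\lambda,\kappa\}\subseteq N\text{ and }|N|\ge\theta\}\in\mathfrak D(H(\chi))$, so intersecting with $B$ preserves positivity.

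For $Princ_{3,\theta}\Rightarrow Princ_{2,\theta}$ I will show $\mathbf N^3_{\kappa,\lambda,\theta,\chi}\cap B\subseteq\mathbf N^2_{\kappa,\lambda,\theta,\chi}$. Fix $N$ in this intersection and a sequence $\langle a_\xi:\xi<\theta\rangle$ of subsets of $\omega$. Since $N\in\mathbf N^1$, for each $\xi$ choose an $N$-witness $P_\xi\in N$ for $a_\xi$, so $P_\xi\subseteq P(\omega)$ and $|P_\xi|<\mu:=\min\{|N|^+,\lambda\}$. Now $\{P_\xi:\xi<\theta\}$ is a subset of $N$ of size $\le\theta\le|N|$; padding it to size exactly $\theta$ and applying the $\mathbf N^3$-property yields some $Z\in N$ with $|Z|<\mu$ and $P_\xi\in Z$ for all $\xi$. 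The simultaneous witness is then obtained by amalgamating the small families of reals inside $Z$: put $P=\bigcup\{Q\in Z:Q\subseteq P(\omega)\text{ and }|Q|<\mu\}$, which is definable from $Z$ (using $\lambda\in N$, and Remark 2.5(d) when $\lambda$ is a successor), hence $P\in N$. Each $P_\xi$ is among the amalgamated $Q$'s, so $P\supseteq\bigcup_{\xi}P_\xi$; consequently, for every $\xi$ and every $b\in P(\omega)\cap N$ with $a_\xi\subseteq b$, the witness $c\in P_\xi$ with $a_\xi\subseteq c\subseteq b$ lies in $P$, i.e.\ $P$ witnesses all the $a_\xi$ simultaneously.

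For the reverse implication of $(b)$ I use coding, as the remark preceding the lemma suggests. Fix a recursive pairing $\langle\cdot,\cdot\rangle$ and, given $N\in\mathbf N^1$ and $\langle a_n:n<\omega\rangle$, set $a=\{\langle n,m\rangle:n<\omega,\ m\in a_n\}\subseteq\omega$, so that the $n$-th section $(x)_n=\{m:\langle n,m\rangle\in x\}$ recovers $a_n=(a)_n$. Let $P\in N$ be an $N$-witness for $a$ and define $P^*=\{(c)_n:c\in P,\ n<\omega\}$; then $P^*\in N$, $P^*\subseteq P(\omega)$, and $|P^*|\le|P|\cdot\aleph_0<\mu$ (here $\mu>\omega$ because $\lambda>\theta=\omega$). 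To see that $P^*$ witnesses $a_n$, take any $b\in P(\omega)\cap N$ with $a_n\subseteq b$ and form $\hat b=\{\langle k,m\rangle:k\ne n\}\cup\{\langle n,m\rangle:m\in b\}\in N$; then $a\subseteq\hat b$, so the witness property of $P$ gives $c\in P$ with $a\subseteq c\subseteq\hat b$, whence $a_n=(a)_n\subseteq(c)_n\subseteq(\hat b)_n=b$ with $(c)_n\in P^*$. Hence $\mathbf N^2_{\kappa,\lambda,\omega,\chi}$ is $\mathfrak D$-positive, giving $Princ_{2,\omega}$.

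The one delicate point, and the step I expect to require the most care, is the cardinality bookkeeping in the second paragraph: verifying simultaneously that the amalgamated set $P$ lies in $N$ and has size $<\mu$. The reduction to $\theta\le|N|$ guarantees $|\bigcup_\xi P_\xi|<\mu$ in the first place, and the union defining $P$ stays of size $<\mu$ by regularity of $\mu$ once the defining size-threshold is chosen as a parameter available in $N$. This is cleanest when $\lambda$ is a successor cardinal, where Remark 2.5(d) replaces $\min\{|N|^+,\lambda\}$ by $\lambda$ uniformly and the threshold $|Q|<\lambda$ both captures every $P_\xi$ and keeps $|P|<\lambda$; the general case requires tracking the two regimes $\mu=|N|^+$ and $\mu=\lambda$ separately, and is where any genuine work lies. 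Everything else is a routine transfer of positivity along the inclusions $\mathbf N^3\cap B\subseteq\mathbf N^2\subseteq\mathbf N^1$.
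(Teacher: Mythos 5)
Your treatment of part (b) --- coding the $\omega$-sequence of reals into a single $a^*\subseteq\omega$ via a pairing function, taking an $N$-witness $P$ for the code, and decoding sectionwise using the auxiliary set $\hat b$ --- is exactly the paper's argument (there the pairing is the G\"odel function $\Gamma$, your $\hat b$ is called $b^{[n]}$ and your $(c)_n$ is $c^{[n]}$), and reading $Princ_{2,\theta}(\kappa,\lambda,\mathfrak{D})\Rightarrow Princ_1(\kappa,\lambda,\mathfrak{D})$ off the literal inclusion $\mathbf{N}^2_{\kappa,\lambda,\theta,\chi}\subseteq\mathbf{N}^1_{\kappa,\lambda,\chi}$ is likewise what the paper does. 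On that substantive content you and the paper coincide.

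Where you diverge is the implication $Princ_{3,\theta}\Rightarrow Princ_{2,\theta}$, which the paper dismisses as ``by definition'' and for which you supply an amalgamation argument; it is precisely there that your write-up has a genuine gap, which you half-acknowledge but do not close. Your amalgam $P=\bigcup\{Q\in Z: Q\subseteq P(\omega),\ |Q|<\mu\}$ with $\mu=\min\{|N|^+,\lambda\}$ is only known to belong to $N$ if the threshold $\mu$ is definable from parameters of $N$. When $\lambda\le|N|^+$ this is fine since $\mu=\lambda\in N$; but when $|N|^+<\lambda$ the cardinal $|N|^+$ is in general \emph{not} an element of $N$ (typically $|N|=|N\cap\kappa|$ with $N\cap\kappa\notin N$), so ``definable from $Z$'' fails and $P\in N$ is unjustified --- while raising the threshold to $\lambda$ admits elements $Q\in Z$ of cardinality between $|N|$ and $\lambda$ and destroys the bound $|P|<|N|^+$. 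There is also the secondary issue you name yourself: for singular $\mu$ a union of $|Z|<\mu$ sets each of size $<\mu$ need not have size $<\mu$. Since the whole content of $Princ_{3,\theta}\Rightarrow Princ_{2,\theta}$ beyond trivial inclusions is exactly this bookkeeping, the sentence ``the general case requires tracking the two regimes\dots and is where any genuine work lies'' locates the problem without solving it. You need either an extra standing hypothesis (e.g.\ $\lambda$ regular, or $\lambda$ a successor as in Remark 2.5(d), which covers every application made of the lemma in the paper) or a choice of amalgam whose size cut-off is provably an element of $N$ and provably above every $|P_\xi|$, before this step can be considered proved.
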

\begin{proof}
$(a)$ is by definition, let's prove $(b)$. It suffices to show that
\[
\mathbf{N}^2_{\kappa, \lambda, \omega,  \chi } = \mathbf{N}^1_{\kappa, \lambda, \chi }.
\]
Let $\Gamma: \omega \times \omega \rightarrow \omega$ be the Godel pairing function.  Let $N \in \mathbf{N}^1_{\kappa, \lambda, \chi },$
 and suppose that $\langle a_n: n<\omega \rangle$ is a sequence of subsets of $\omega.$ Let
 \[
 a^*=\{ \Gamma(i, n): n<\omega, i \in a_n    \}.
 \]
Let $P^* \in N$ be an $N$-witness for $a^*.$ Let
\[
P=\{ \{i: \Gamma(i, n) \in b   \}: n<\omega, b \in P^*                        \}.
\]
We show that $P$ is an $N$-witness for all $a_n, n<\omega,$ simultaneously. Clearly $P\in N, P \subseteq P(\omega) \cap N$ and $|P| < \min\{|N|^+, \lambda  \}$. Now let $n< \omega, ~ b \in P(\omega) \cap N$ and assume $a_n \subseteq b.$ Let
$b^{[n]}=\{ \Gamma(i, m): i, m < \omega$ and $m=n \Rightarrow i\in b        \}.$
Clearly $b^{[n]}  \in P(\omega) \cap N$ and $a^* \subseteq b^{[n]}.$ Hence by the choice of $P^*,$ there is $c \in P^*$ such that $a^* \subseteq c \subseteq b^{[n]}.$ Let $c^{[n]}=\{ i: \Gamma(i, n)\in c   \}$. Then $c^{[n]} \in P,$ and we can easily see that $a_n \subseteq c^{[n]} \subseteq b.$
We are done.
\end{proof}

\section{On $C^s(\kappa)$ and its generalizations}
Recall that for a filter $D$ on a set $I$,  $D^+$  is defined by
\begin{center}
$D^+=\{X \subseteq I: I\setminus X \notin D \}$.
\end{center}
It is clear that $D \subseteq D^+.$
\begin{definition}
Suppose $\kappa$ is regular uncountable, $D$ is a filter on $\kappa, J$ is an ideal on $\omega$ and $T$ is a subtree of $\theta^{<\omega}.$
\begin{itemize}
\item [$(a)$] The combinatorial principle
$C^D_T(\kappa, J)$ states: for any $(\kappa\times\theta)$-matrix $\bar{A}=\langle a_{\alpha, \xi}: \alpha<\kappa, \xi<\theta   \rangle$ of subsets of $\omega,$ one of the following holds:
\begin{itemize}
\item [$(\alpha)$]: There exists $S\in D^+$ such that for all $t\in T\cap \theta^n$ and all distinct $\alpha_0, \dots, \alpha_{n-1}\in$
$S, \bigcap_{i<n}a_{\alpha_i, t(i)} \neq \emptyset$ mod $J$.

\item [$(\beta)$]: There are $t\in T\cap \theta^n,$ for some $0<n<\omega,$ and $S_0, \dots, S_{n-1}\in D^+$ such that for
 all distinct $\alpha_i\in S_i, i<n$ we have $\bigcap_{i<n}a_{\alpha_i, t(i)} = \emptyset$ mod $J$.
\end{itemize}
\item [$(b)$] $C^D(\kappa, J)$ is $C^D_{T}(\kappa, J)$ for all trees $T \subseteq \theta^{<\omega}.$

\item [$(c)$] For $m<\omega,$ the combinatorial principles
$C^D_{T,m}(\kappa, J)$ and $C^D_{m}(\kappa, J)$ are defined similarly, where we require $T \subseteq \theta^{\leq m}.$
\end{itemize}
\end{definition}
\begin{remark} Suppose that $\kappa$ is regular uncountable and $m<\omega$.
\begin{itemize}
\item [$(a)$] If $D$ is the club filter on $\kappa,$ and $J=\{\emptyset\}$, then $C^D(\kappa, J), C^D_{m}(\kappa, J)$ are respectively the principles
 $C^s(\kappa), C^s_m(\kappa)$  from \cite{juhasz-soukup1}.

\item [$(b)$] If $D$ is the filter of co-bounded subsets of $\kappa,$ and $J=\{\emptyset\}$, then $C^D(\kappa, J), C^D_{m}(\kappa, J)$
 are respectively the principles $C(\kappa), C_m(\kappa)$ from \cite{juhasz-soukup1}.
\end{itemize}
\end{remark}
\begin{theorem}
Assume $\theta< \kappa \leq 2^{\aleph_0}, \kappa$ is regular, $J$ is an ideal on $\omega, T$ is a subtree of $\theta^{<\omega},$ and suppose that $Prin_{2,\theta}(\kappa, \kappa, \mathfrak{D})$ holds, where $\mathfrak{D}$ is a definition of $\kappa$-normal filters. Then $C^D_T(\kappa, J)$ holds, where $D$ is any filter on $\kappa$ satisfying: for $X\in D$ and $N\in \mathbf{N}^2_{\kappa, \lambda, \theta,\mathfrak{D}, \chi }$ with $D\in N$, $X\in N \Rightarrow \delta(N)= N \cap \kappa\in X.$
\end{theorem}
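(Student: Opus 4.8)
The plan is to prove the dichotomy by fixing the matrix $\bar A$ and producing, from $Princ_{2,\theta}(\kappa,\kappa,\mathfrak D)$, a single positive set $S\in D^+$ that will witness clause $(\alpha)$ unless a ``bad'' configuration appears, in which case that configuration gets spread out to witness $(\beta)$. First I would fix a large regular $\chi$ and set $\mathbf N^2=\mathbf N^2_{\kappa,\kappa,\theta,\chi}$, which by hypothesis is $\neq\emptyset$ mod $\mathfrak D(H(\chi))$. Intersecting with the $\mathfrak D(H(\chi))$-large set $\{N:\bar A,D,T,J,\theta\in N\}$ (using normality axiom $(1)$) leaves a positive family $\mathcal N$, and I put $S=\{\delta(N):N\in\mathcal N\}$ where $\delta(N)=N\cap\kappa$. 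That $S\in D^+$ is exactly where the stated hypothesis on $D$ enters: given $X\in D$, normality provides $N\in\mathcal N$ with $X\in N$, and then the hypothesis forces $\delta(N)\in X$, so $S$ meets every member of $D$, i.e.\ $\kappa\setminus S\notin D$.

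Next I would show that $S$ witnesses $(\alpha)$ unless $(\beta)$ already holds. Suppose $(\alpha)$ fails for this $S$: there are $t\in T\cap\theta^n$ and distinct $\alpha_0<\dots<\alpha_{n-1}$ in $S$ with $\bigcap_{i<n}a_{\alpha_i,t(i)}\in J$. I will derive $(\beta)$, and the whole argument is an induction on the length $n$ of $t$. Let $N\in\mathcal N$ with $\delta(N)=\alpha_{n-1}$; since $\theta<\kappa$ and $\alpha_0,\dots,\alpha_{n-2}<\delta(N)$, all of $t,\alpha_0,\dots,\alpha_{n-2}$ lie in $N$, hence so does $b^*:=\bigcap_{i<n-1}a_{\alpha_i,t(i)}$. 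The top row $\langle a_{\delta(N),\xi}:\xi<\theta\rangle$ need not be in $N$, but the bad configuration gives the \emph{exact} inclusion $a_{\delta(N),t(n-1)}\setminus b^*\subseteq\omega\setminus b^*$ into a member of $N$. Here is the crucial use of $\mathbf N^2$: apply the witness property of $N$ to the $\theta$-sequence $\langle a_{\delta(N),\xi}\setminus b^*:\xi<\theta\rangle$ to get $P\in N$ and then $c\in P\subseteq N$ with $a_{\delta(N),t(n-1)}\setminus b^*\subseteq c\subseteq\omega\setminus b^*$. This single $c\in N$ records the top row mod $J$: it satisfies $c\cap b^*=\emptyset$ and $a_{\delta(N),t(n-1)}\setminus c\subseteq a_{\delta(N),t(n-1)}\cap b^*\in J$. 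Passing through $\setminus b^*$ to an exact inclusion is what lets an arbitrary ideal $J$ be handled uniformly, not only $J=\{\emptyset\}$.

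Now I would free the top coordinate by setting $S_{n-1}:=\{\beta<\kappa:a_{\beta,t(n-1)}\setminus c\in J\}$; this is defined with parameters $\bar A,t,c,J\in N$, so $S_{n-1}\in N$, and $\delta(N)\in S_{n-1}$ by the previous line, whence $S_{n-1}\in D^+$ exactly as for $S$. Folding $c$ into the matrix by $a^c_{\alpha,\xi}:=a_{\alpha,\xi}\cap c$, the relation $c\cap b^*=\emptyset$ becomes $\bigcap_{i<n-1}a^c_{\alpha_i,t(i)}\in J$, a length-$(n-1)$ bad tuple for the matrix $\bar A^c\in N$ and the node $t\restriction(n-1)\in T$. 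Applying the inductive hypothesis to $\bar A^c$ inside $N$ should yield $S_0,\dots,S_{n-2}\in D^+$ (membership in $D^+$ being absolute between $N$ and $V$ since $D\in N$) with every cross-intersection $\bigcap_{i<n-1}a^c_{\beta_i,t(i)}\in J$; combined with $S_{n-1}$ and $a_{\beta,t(n-1)}\setminus c\in J$ for $\beta\in S_{n-1}$, the inclusion $\bigcap_{i<n}a_{\beta_i,t(i)}\subseteq\big(\bigcap_{i<n-1}a^c_{\beta_i,t(i)}\big)\cup\big(a_{\beta_{n-1},t(n-1)}\setminus c\big)$ and the ideal axioms give $\bigcap_{i<n}a_{\beta_i,t(i)}\in J$ for all distinct $\beta_i\in S_i$, which is precisely $(\beta)$.

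The step I expect to be the main obstacle is making this recursion legitimate: to invoke the inductive hypothesis on $\bar A^c$ I need the reduced bad tuple $\alpha_0<\dots<\alpha_{n-2}$ to consist of $\delta$'s of reflecting models that \emph{also} contain the freshly manufactured parameter $c$, i.e.\ I need $\{\alpha_0,\dots,\alpha_{n-2}\}\subseteq S(\bar A^c)$ and not merely $\subseteq S(\bar A)$. Since $c\in N$ and $\delta(N)>\alpha_{n-2}$, this is a coherence question — whether every $\gamma\in S(\bar A)$ below $\delta(N)$ survives in $S(\bar A^c)$ after absorbing $c$ — and it is not delivered by bare positivity of $\mathbf N^2$, because adjoining a real to a reflecting model can in principle raise its $\delta$. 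I would address it by carrying out the construction along a coherent, $\in$-increasing system of models from $\mathbf N^2$ chosen closed enough that adjoining a real of $N$ lying below $\delta(N)$ does not move the $\delta$ of the lower models, so that the reduced reflecting set absorbs $c$; verifying that such a coherent system can be extracted from a $\mathfrak D$-positive family — and that it is exactly the simultaneous witnessing of $\mathbf N^2$ (rather than $\mathbf N^1$) that survives the folding — is the technical heart of the argument.
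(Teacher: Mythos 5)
You have correctly identified where the difficulty lies, but your proposal does not resolve it, and the unresolved step is precisely where the paper's proof uses an idea you are missing. Your recursion on $n$ requires the reduced bad tuple $\alpha_0<\dots<\alpha_{n-2}$ to consist of $\delta$'s of models that contain the freshly manufactured parameter $c$; since $c$ is extracted from a model $N$ with $\delta(N)=\alpha_{n-1}$, there is no reason any model reflecting the matrix at a lower $\delta$ should contain $c$, and a $\mathfrak{D}$-positive family need not admit the coherent $\in$-increasing subsystem you propose to extract (positivity is preserved under intersection with filter sets, not under thinning to chains with extra closure). You flag this as ``the technical heart'' and leave it open, so as written the argument is incomplete.

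The paper avoids the recursion altogether by one extra use of normality that your proposal omits: since each $N$ in the positive family $\mathcal{N}$ has, by the simultaneous witnessing built into $\mathbf{N}^2$, a \emph{single} $P_N\in N$ witnessing all rows $a_{\delta(N),\xi}$, $\xi<\theta$, at once, the map $N\mapsto P_N$ is regressive, and pressing down yields a positive subfamily $\mathcal{N}_*$ on which $P_N=P_*$ is constant. Taking $S=\{\delta(N):N\in\mathcal{N}_*\}$ and a bad tuple $\alpha_0<\dots<\alpha_{n-1}\in S$ with models $N_i$, one runs a single downward induction $i=n-1,\dots,0$, choosing $c_i\in P_*$ with $a_{\alpha_i,t(i)}\subseteq c_i\subseteq \omega\setminus\bigl(\bigcap_{j<i}a_{\alpha_j,t(j)}\cap\bigcap_{i<j<n}c_j\bigr)$ mod $J$; the complemented set lies in $N_i$ because $\alpha_j<\delta(N_i)$ for $j<i$ and because every $c_j$ already chosen comes from $P_*\in N_i$. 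This is exactly the coherence you were trying to manufacture --- all auxiliary reals produced along the way live in the one set $P_*$ belonging to every model in play --- and it is obtained by a Fodor-style pressing-down rather than by building a coherent tower. The sets $S_i=\{\alpha<\kappa: a_{\alpha,t(i)}\subseteq c_i \text{ mod } J\}\in N_i$ are then in $D^+$ because $\delta(N_i)=\alpha_i\in S_i$, and they give clause $(\beta)$ much as in your final combination step. If you replace your recursion on $n$ by this single pressing-down step, your argument becomes essentially the paper's proof.
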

\begin{remark}
If $D$ is the club filter on $\kappa$ or the filter of co-bounded subsets of $\kappa,$ then $D$ has the above mentioned property.
\end{remark}
\begin{proof}
Let $\bar{A}=\langle a_{\alpha, \xi}: \alpha<\kappa, \xi<\theta   \rangle$ be a $(\kappa\times\theta)$-matrix of subsets of $\omega.$
 Let $\chi >  2^{\aleph_0} $ be large enough regular. By our assumption
\begin{center}
$\mathbf{N}^2_{\kappa, \kappa, \theta, \chi } \neq \emptyset$ mod $(\mathfrak{D}(H(\chi)))^+$.
\end{center}
Hence by normality of the filter,
\begin{center}
$\mathcal{N}=\{ N\in \mathbf{N}^2_{\kappa, \kappa, \theta, \chi }: D, \bar{A}\in N   \}\in \mathfrak{D}(H(\chi))^+.$
\end{center}
For $N\in \mathcal{N},$ set $\delta(N)=N\cap \kappa \in \kappa.$ By our assumption, for each $N\in \mathcal{N},$ we can find $P_N\in N$ such that $P_N$ is an $N$-witness for each $a_{\delta(N), \xi}, \xi<\theta,$ simultaneously. Then the map
$N \mapsto P_N$
is regressive on $\mathcal{N},$ so
by the normality of the filter $\mathfrak{D}(H(\chi))$, we can find $\mathcal{N_*} \subseteq \mathcal{N}$ and $P_*$ such that
 $\mathcal{N_*}\in \mathfrak{D}(H(\chi))^+,$ and
 for all $N\in \mathcal{N_*}, P_N=P_*.$
Let
\begin{center}
$S=\{\delta(N): N\in \mathcal{N_*}\}.$
\end{center}
\begin{claim}
$S\in D^+.$
\end{claim}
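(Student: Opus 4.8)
The plan is to prove the contrapositive: assuming $\kappa\setminus S\in D$, I would derive a contradiction by producing a single model $N$ whose trace $\delta(N)$ is forced to lie both inside and outside $S$. So set $X=\kappa\setminus S$ and suppose toward a contradiction that $X\in D$.

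First I would record the elementary observation that for any fixed parameter $X\in H(\chi)$, the collection $\{N : X\in N\}$ belongs to the normal filter $\mathfrak{D}(H(\chi))$. This is immediate from clause $(1)$ of the definition of normality applied to the singleton $a=\{X\}\in[H(\chi)]^{<\kappa}$, which gives $\{N : \{X\}\subseteq N\}=\{N : X\in N\}\in\mathfrak{D}(H(\chi))$; recall that each $N\in\mathbf{N}^2_{\kappa,\kappa,\theta,\chi}$ satisfies $|N|\leq N\cap\kappa\in\kappa$, so $N$ is legitimately an element of $[H(\chi)]^{<\kappa}$.

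Next, since $\mathcal{N}_*\in\mathfrak{D}(H(\chi))^+$ while $\{N : X\in N\}\in\mathfrak{D}(H(\chi))$, and since the intersection of a $\mathfrak{D}(H(\chi))$-positive set with a $\mathfrak{D}(H(\chi))$-set is again positive, we get $\mathcal{N}_*\cap\{N : X\in N\}\in\mathfrak{D}(H(\chi))^+$; in particular this intersection is nonempty. Fix any $N$ in it. On the one hand, $N\in\mathcal{N}_*\subseteq\mathcal{N}$, so $D\in N$ and $\delta(N)\in S$ by the very definition of $S$. On the other hand, we have $X\in N$, $X\in D$, and $D\in N$, so the hypothesis on $D$ applies and yields $\delta(N)\in X=\kappa\setminus S$, i.e. $\delta(N)\notin S$. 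This contradiction shows $\kappa\setminus S\notin D$, that is, $S\in D^+$.

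I do not anticipate a genuine obstacle: the argument is a single application of normality together with the special property of $D$. The one point requiring care is purely bookkeeping, namely that $X=\kappa\setminus S$, although it is defined from the entire construction (through $\mathcal{N}_*$ and $P_*$), is nonetheless a single fixed element of $H(\chi)$, so that the membership observation of the second paragraph applies to it verbatim. The role of the hypothesis on $D$ is precisely to convert the statement ``$X\in N$'' into ``$\delta(N)\in X$'', which is what forces the contradiction.
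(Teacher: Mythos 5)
Your argument is correct and is essentially the same as the paper's: the paper likewise assumes $\kappa\setminus S\in D$, uses normality to see that $\mathcal{N}_{**}=\{N\in\mathcal{N}_*:\kappa\setminus S\in N\}$ is $\mathfrak{D}(H(\chi))$-positive (hence nonempty), and then applies the hypothesis on $D$ to a member $N$ of $\mathcal{N}_{**}$ to get $\delta(N)\in\kappa\setminus S$ while $N\in\mathcal{N}_*$ forces $\delta(N)\in S$. Your explicit justification via clause $(1)$ of normality applied to $a=\{\kappa\setminus S\}$ is just a spelled-out version of the same step.
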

\begin{proof}
Suppose not; so $\kappa\setminus S \in D.$ But then for all $N\in \mathcal{N_*},$
\begin{center}
$\kappa\setminus S \in N \Rightarrow \delta(N)\in \kappa\setminus S.$
\end{center}
On the other hand by normality of the filter $\mathfrak{D}(H(\chi))$, we have
\begin{center}
$\mathcal{N_{**}}=\{N\in \mathcal{N_*}: \kappa\setminus S\in N \} \in \mathfrak{D}(H(\chi))^+,$
\end{center}
in particular $\mathcal{N_{**}} \neq \emptyset$. Let $N\in \mathcal{N_{**}}.$ Then we have
 $\kappa\setminus S \in D$, which implies $\delta(N)\in \kappa\setminus S.$ But on the other hand
$ N\in \mathcal{N_{*}}$ (as $\mathcal{N_{**}} \subseteq \mathcal{N_{*}}$), which implies $ \delta(N)\in S,$
 a contradiction.
\end{proof}
If for all $t\in T\cap \theta^n$ and all distinct $\alpha_0, \dots, \alpha_{n-1}\in S,$ we have $\bigcap_{i<n}a_{\alpha_i, t(i)} \neq \emptyset$ mod $J,$ then case $(\alpha)$ of Definition 3.1$(a)$ holds and we are done. Otherwise, we can find $t\in T\cap \theta^n$ and distinct $\alpha_0, \dots, \alpha_{n-1}\in S,$ such that $\bigcap_{i<n}a_{\alpha_i, t(i)} = \emptyset$ mod $J.$

For each $i<n,$ set $N_i\in \mathcal{N_*}$ be such that $\alpha_i=\delta(N_i).$ We also assume w.l.o.g. that $\alpha_0 < \dots < \alpha_{n-1}.$
\begin{claim}
There are $c_0, \dots, c_{n-1}\in P_*$ such that:
\begin{enumerate}
\item $\bigcap_{i<n}c_i =\emptyset$ mod $J$,
\item $i< n \Rightarrow a_{\alpha_i, t(i)} \subseteq c_i$ mod $J$.
\end{enumerate}
\end{claim}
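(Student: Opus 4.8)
The plan is to construct $c_0,\dots,c_{n-1}$ by a finite downward recursion on $j=n-1,n-2,\dots,0$, exploiting that the relevant data sit inside the models $N_j$. First I would record three structural facts. Since $P_*=P_{N_j}$ is an $N_j$-witness, we have $|P_*|<\min\{|N_j|^+,\kappa\}$ and $P_*\in N_j$; as $|N_j|\le\delta(N_j)=N_j\cap\kappa$ and $\delta(N_j)\subseteq N_j$, enumerating $P_*$ inside $N_j$ in order type $\le\delta(N_j)$ shows $P_*\subseteq N_j$, so every member of $P_*$ lies in each $N_j$. Next, for $i<j$ we have $\alpha_i<\alpha_j=\delta(N_j)$, hence $\alpha_i\in N_j$; and since $\bar A\in N_j$ forces $\theta\in N_j$, so $\theta\subseteq\delta(N_j)\subseteq N_j$ and thus $t(i)\in N_j$; together with $\bar A\in N_j$ this gives $a_{\alpha_i,t(i)}\in N_j$. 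Finally, applying the witness to $b=c\cap c'\in N_j$ for $c,c'\in P_*$ above $a_{\alpha_j,t(j)}$ shows that the family $\{c\in P_*:a_{\alpha_j,t(j)}\subseteq c\}$ is downward directed, and applying it to $b=\omega\setminus\{x\}$ for each $x\notin a_{\alpha_j,t(j)}$ shows that its intersection is exactly $a_{\alpha_j,t(j)}$.

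I would then run the recursion maintaining the invariant $(\bigcap_{i\ge j}c_i)\cap(\bigcap_{i<j}a_{\alpha_i,t(i)})\in J$, whose base case $j=n$ is the hypothesis $\bigcap_{i<n}a_{\alpha_i,t(i)}\in J$. At stage $j$, set $R_j=(\bigcap_{i>j}c_i)\cap(\bigcap_{i<j}a_{\alpha_i,t(i)})$; by the first paragraph $R_j\in N_j$, and the invariant at $j+1$ reads exactly $a_{\alpha_j,t(j)}\cap R_j\in J$. What is needed is a single $c_j\in P_*$ with $a_{\alpha_j,t(j)}\subseteq c_j$ mod $J$ and $c_j\cap R_j\in J$; granting this, $c_j\cap R_j=(\bigcap_{i\ge j}c_i)\cap(\bigcap_{i<j}a_{\alpha_i,t(i)})\in J$, so the invariant descends, and at $j=0$ it yields $\bigcap_{i<n}c_i\in J$, which is $(1)$, while the individual choices give $(2)$.

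The heart of the matter is producing $c_j$. If one first secures an $N_j$-set $b_j$ with $a_{\alpha_j,t(j)}\subseteq b_j$ and $b_j\cap R_j\in J$, then the witness applied to $b_j$ delivers $c_j\in P_*$ with $a_{\alpha_j,t(j)}\subseteq c_j\subseteq b_j$, whence $c_j\cap R_j\subseteq b_j\cap R_j\in J$. When $J=\{\emptyset\}$ this is immediate: $a_{\alpha_j,t(j)}\cap R_j=\emptyset$ gives $a_{\alpha_j,t(j)}\subseteq b_j:=\omega\setminus R_j\in N_j$ outright, which is exactly the classical $C^s$ argument. For a general ideal $J$ the inclusion $a_{\alpha_j,t(j)}\subseteq\omega\setminus R_j$ holds only mod $J$, and the genuine obstacle is to absorb the $J$-small overlap $a_{\alpha_j,t(j)}\cap R_j$ inside $N_j$: equivalently, to find $g\in J\cap N_j$ with $a_{\alpha_j,t(j)}\cap R_j\subseteq g$, for then $b_j:=(\omega\setminus R_j)\cup g$ works. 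This is the step I expect to cost the most effort, precisely because $a_{\alpha_j,t(j)}\notin N_j$. The tool I would reach for is reflection at the critical point $\delta(N_j)$: the set $Z=\{\gamma<\kappa:a_{\gamma,t(j)}\cap R_j\in J\}$ belongs to $N_j$ (as $\bar A,R_j,J\in N_j$) and contains $\delta(N_j)$, so it is unbounded in $\kappa$; combining this with the downward-directed approximation of $a_{\alpha_j,t(j)}$ by members of $P_*\subseteq N_j$ from the first paragraph is what should furnish the cover $g$, and hence the desired $c_j$.
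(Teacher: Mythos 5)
Your proposal is correct and follows essentially the same route as the paper: the same downward recursion with the identical invariant $\bigcap_{i<j}a_{\alpha_i,t(i)}\cap\bigcap_{j\le i<n}c_i=\emptyset$ mod $J$, choosing $c_j\in P_*$ at each stage by applying the $N_j$-witness property of $P_*$ to the complement $b_j=\omega\setminus R_j\in N_j$ (and the same observations that $P_*\subseteq N_j$ and $b_j\in N_j$). The only divergence is your final paragraph: the paper simply invokes the witness property ``mod $J$'' without the absorption step you describe, which is harmless for the ideals actually used in the paper ($J=\{\emptyset\}$ and $J=[\omega]^{<\omega}$, where $a_{\alpha_j,t(j)}\setminus b_j$ is finite and hence already an element of $N_j$), so the reflection machinery you sketch for a general ideal $J$ --- which you leave incomplete --- is not needed to match the paper's argument.
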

\begin{proof}
We construct the sets $c_i, i<n,$ by downward induction on $i,$ so that for all $i<n,$

$(*)_i$ $\hspace{3.cm}$ $\bigcap_{j<i}a_{\alpha_j, t(j)} \cap \bigcap_{i\leq j< n}c_j =\emptyset$ mod $J$.

For $i=n,$ there is nothing to prove; thus suppose that $i<n$ and $c_{i+1}\in P_*$ is defined, so that $(*)_{i+1}$ is satisfied. It then follows that
\begin{center}
$a_{\alpha_i, t(i)} \subseteq b_i= \omega \setminus (\bigcap_{j<i}a_{\alpha_j, t(j)} \cap \bigcap_{i+1 \leq j <n}c_j)$ mod $J$.
\end{center}
It is easily seen that $b_i\in N_i,$ so as $P_*$ is an $N_i$-witness for $a_{\alpha_i, t(i)},$ we can find $c_i\in P_*$ so that
\begin{center}
$a_{\alpha_i, t(i)} \subseteq c_i \subseteq b_i$ mod $J$.
\end{center}
It is easily seen that $c_0, \dots, c_{n-1}$ are as required.
\end{proof}
For $i<n,$ set
\begin{center}
$S_i=\{ \alpha\in \kappa: a_{\alpha, t(i)} \subseteq c_i$ mod $J   \} \in N_i.$
\end{center}
\begin{claim}
For each $i<n, S_i\in D^+.$
\end{claim}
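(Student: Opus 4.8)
The plan is to reproduce, now localized at the single model $N_i$, exactly the reflection argument that was used to prove the earlier claim $S\in D^+$. The two ingredients I would isolate are: (i) the diagonal ordinal $\delta(N_i)=\alpha_i$ lies in $S_i$, and (ii) $S_i$ is an element of $N_i$. Granted these, the defining property of $D$ from the theorem's hypothesis, applied to the complement $\kappa\setminus S_i$, immediately rules out $\kappa\setminus S_i\in D$.

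First I would verify (i). Since $\alpha_i=\delta(N_i)$ and part $(2)$ of the preceding claim gives $a_{\alpha_i,t(i)}\subseteq c_i$ mod $J$, the very definition $S_i=\{\alpha<\kappa: a_{\alpha,t(i)}\subseteq c_i \text{ mod } J\}$ yields $\alpha_i\in S_i$. For (ii), I would note that $S_i$ is definable in $H(\chi)$ from the parameters $\bar{A}, t(i), c_i, J$, all of which belong to $N_i$; here $c_i\in N_i$ because $c_i\in P_*$ and $P_*\in N_i$ has true cardinality strictly below $\delta(N_i)=N_i\cap\kappa$ (by the bound $|P_*|<\min\{|N_i|^+,\kappa\}$ together with $|N_i|\le N_i\cap\kappa$), which forces $P_*\subseteq N_i$. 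Hence $S_i\in N_i$ — this is exactly the membership already recorded in the statement of the claim — and, since $\kappa\in N_i$, also $\kappa\setminus S_i\in N_i$.

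Then I would argue by contradiction. Assume $S_i\notin D^+$, i.e. $\kappa\setminus S_i\in D$. Recalling that $D\in N_i$ and $N_i\in\mathbf{N}^2_{\kappa,\kappa,\theta,\chi}$, I apply the property of $D$ with $X=\kappa\setminus S_i$ and $N=N_i$: since $X\in D$, $D\in N_i$ and $X\in N_i$, it gives $\delta(N_i)\in X=\kappa\setminus S_i$, that is $\alpha_i\notin S_i$. This contradicts step (i), so $\kappa\setminus S_i\notin D$, i.e. $S_i\in D^+$, as desired.

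There is no serious obstacle here; the argument is a one-model version of the proof that $S\in D^+$, and its whole force comes from the reflection property imposed on $D$ in the theorem. The only point deserving a little care is confirming that all parameters defining $S_i$ — in particular $c_i$ — genuinely lie in $N_i$ so that $S_i\in N_i$; but this is exactly what the cardinality bound on the witness $P_*$ secures, and the membership $S_i\in N_i$ is in any case already granted when $S_i$ is introduced.
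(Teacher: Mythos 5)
Your argument is exactly the paper's: assume $\kappa\setminus S_i\in D$, note $S_i$ (hence $\kappa\setminus S_i$) lies in $N_i$, apply the reflection hypothesis on $D$ to get $\alpha_i=\delta(N_i)\in\kappa\setminus S_i$, and contradict $\alpha_i\in S_i$, which holds by clause $(2)$ of the preceding claim. The only difference is that you spell out why $c_i\in N_i$ and hence $S_i\in N_i$ (via $P_*\subseteq N_i$ from the cardinality bound), a point the paper simply asserts; this is a correct and welcome elaboration, not a different route.
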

\begin{proof}
Suppose not; then $\kappa\setminus S_i\in D.$ But as $\kappa\setminus S_i \in N_i,$ we have $\alpha_i=\delta(N_i)\in \kappa\setminus S_i,$ which is a contradiction.
\end{proof}
Now if $\beta_i\in S_i$ are distinct, then
\begin{center}
$\bigcap_{i<n}a_{\beta_i, t(i)} \subseteq \bigcap_{i<n}c_i =\emptyset$ mod $J$,
\end{center}
and hence case $(\beta)$ of Definition 3.1$(a)$ holds and we are done.
The theorem follows.
\end{proof}
\begin{corollary}
 Assume $\kappa \leq 2^{\aleph_0}$ is regular uncountable. Then $Princ(\kappa)$ implies $C^s(\kappa).$
\end{corollary}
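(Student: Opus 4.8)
The plan is to read the corollary off Theorem 3.3 after specializing all the parameters to those that define $C^s(\kappa)$. Throughout, let $\mathfrak{D}$ be the $\kappa$-definition of normal filters given by the club filter, so that $\mathfrak{D}(A)$ is the club filter on $[A]^{<\kappa}$ for each $A \supseteq \kappa$; let $D$ be the club filter on $\kappa$, and let $J = \{\emptyset\}$. That the club filter does yield a $\kappa$-definition of normal filters in the sense of Definition 2.2 is routine, and is already invoked in Remark 2.5(a).

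First I would translate the hypothesis $Princ(\kappa)$ into the form demanded by Theorem 3.3. By Remark 2.5(a), Shelah's $Princ(\kappa)$ is $Princ(\kappa, \kappa) = Princ_1(\kappa, \kappa, \mathfrak{D})$ for this $\mathfrak{D}$. By Lemma 2.6(b) with $\lambda = \kappa$ we have $Princ_1(\kappa, \kappa, \mathfrak{D}) \Leftrightarrow Princ_{2,\omega}(\kappa, \kappa, \mathfrak{D})$. Hence $Princ(\kappa)$ gives $Princ_{2,\omega}(\kappa, \kappa, \mathfrak{D})$, and by the monotonicity in $\theta$ of Remark 2.5(c) also $Princ_{2,\theta}(\kappa, \kappa, \mathfrak{D})$ for every $\theta \leq \omega$.

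Next I would verify the remaining hypotheses of Theorem 3.3 for these choices, taking $\theta = \omega$. Since $\kappa$ is regular uncountable we have $\theta = \omega < \kappa$, and $\kappa \leq 2^{\aleph_0}$ is assumed; $J = \{\emptyset\}$ is an ideal on $\omega$, and any subtree $T \subseteq \omega^{<\omega}$ is permitted. The only substantive hypothesis is the absorption property required of $D$, namely that $\delta(N) = N \cap \kappa \in X$ whenever $X \in D$ and $N \in \mathbf{N}^2_{\kappa, \kappa, \theta, \chi}$ has $D, X \in N$. For $D$ the club filter this is precisely Remark 3.4. Theorem 3.3 then delivers $C^D_T(\kappa, J)$ for every subtree $T \subseteq \omega^{<\omega}$.

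Finally I would assemble the conclusion. By Definition 3.1(b), asserting $C^D_T(\kappa, J)$ for all trees $T \subseteq \theta^{<\omega}$ is exactly the principle $C^D(\kappa, J)$, so the previous step yields $C^D(\kappa, J)$; and by Remark 3.2(a), for $D$ the club filter and $J = \{\emptyset\}$ this principle is $C^s(\kappa)$. If the index intended for $C^s(\kappa)$ is some $\theta < \omega$ rather than $\theta = \omega$, one instead applies Theorem 3.3 with that $\theta$, which is legitimate by Remark 2.5(c); alternatively the $\theta = \omega$ conclusion already specializes to it. I do not anticipate a genuine obstacle here: the corollary is a bookkeeping consequence of Theorem 3.3, and the only points requiring care are the correct identification of the index $\theta$ underlying $C^s(\kappa)$ — handled by Lemma 2.6(b) together with Remark 2.5(c) — and the verification that the club filter satisfies the absorption property, which is exactly Remark 3.4.
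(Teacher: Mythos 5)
Your proposal is correct and is exactly the intended derivation: the paper states the corollary without proof as an immediate specialization of Theorem 3.3, and your specialization (club filter for $\mathfrak{D}$ and $D$, $J=\{\emptyset\}$, $\theta=\omega$, with $Princ(\kappa)\Leftrightarrow Princ_{2,\omega}(\kappa,\kappa,\mathfrak{D})$ via Lemma 2.6(b) and the absorption property via Remark 3.4) is the right bookkeeping. No gaps.
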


\section{Forcing $Princ_1(\kappa, \kappa, \mathfrak{D})$}
In this section we consider the  principles $Princ_1(\kappa, \lambda, \mathfrak{D})$ and $Princ_{2,\theta}(\kappa, \lambda, \mathfrak{D}),$ where $\theta<\lambda\leq \kappa=cf(\kappa)$ and $\mathfrak{D}$ is a $\kappa$-definition of normal filters, and discuss their consistency.
In fact, we will show that  in the generic extension by the Cohen forcing   $Add(\omega, \kappa)$ the above principles hold. We prove the result
for $Princ(\kappa)$, as the other cases can be proved similarly.

Recall that the Cohen forcing $Add(\omega, I)$ for adding $|I|$-many new Cohen subsets of $\omega$ is defined as
\begin{center}
$Add(\omega, I)=\{p: \omega \times I \to 2: |p| < \aleph_0           \}$,
\end{center}
ordered by reverse inclusion.

For a nice name $\lusim{a} = \bigcup_{n<\omega} \{\check{n}\} \times A_n$, where each $A_n$ is  a maximal antichain in $Add(\omega, \lambda),$ set
\[
\supp(\lusim{a})=\{\alpha \in \lambda: \exists n<\omega, \exists p \in A_n, \exists k \in \omega, (k, \alpha) \in \dom(p)     \}.
\]
Note that, by the countable chain condition property of $Add(\omega, \lambda),$ $\supp(\lusim{a})$ is a countable set, and $\lusim{a}$ can be considered as an $Add(\omega, \supp(\lusim{a}))$-name.
The following lemma follows easily by an absoluteness argument.
\begin{lemma}
Assume $U \subseteq \lambda$, $\lusim{a}_1, \dots, \lusim{a}_n$ are $Add(\omega, U)$-names,   $\phi(v_1, \dots, v_n)$ is a $\Delta^{\text{ZFC}}_1$-formula
and $p \in Add(\omega, \lambda)$. Then
\begin{center}
$p \Vdash_{Add(\omega, \lambda)}$``$\phi(\lusim{a}_1, \dots, \lusim{a}_n)$'' $\iff$ $p\upharpoonright \omega \times U \Vdash_{Add(\omega, U)}$``$\phi(\lusim{a}_1, \dots, \lusim{a}_n)$''.
\end{center}
\end{lemma}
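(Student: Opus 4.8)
The plan is to exploit the product structure of Cohen forcing together with the absoluteness of $\Delta^{\text{ZFC}}_1$-formulas. The decisive structural fact is that the restriction map $p \mapsto (p \upharpoonright \omega \times U,\; p \upharpoonright \omega \times (\lambda \setminus U))$ is an isomorphism
\[
Add(\omega, \lambda) \;\cong\; Add(\omega, U) \times Add(\omega, \lambda \setminus U),
\]
since a condition of $Add(\omega,\lambda)$ is merely a finite partial function $\omega \times \lambda \to 2$, it is determined by its two restrictions, and these two restrictions can be specified independently. Throughout I write $p_0 = p \upharpoonright \omega \times U$ and $p_1 = p \upharpoonright \omega \times (\lambda \setminus U)$.

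First I would invoke the product (Solovay) lemma under this identification: a filter $G$ is $Add(\omega,\lambda)$-generic over $V$ precisely when it corresponds to a pair $(G_U, G')$, where $G_U$ is $Add(\omega,U)$-generic over $V$ and $G'$ is $Add(\omega, \lambda \setminus U)$-generic over $V[G_U]$, and then $V[G] = V[G_U][G']$. Since each $\lusim{a}_i$ is by hypothesis an $Add(\omega,U)$-name, its interpretation depends only on the first coordinate, so $\lusim{a}_i[G] = \lusim{a}_i[G_U] \in V[G_U]$. Hence all parameters fed to $\phi$ lie in the inner model $V[G_U]$, and $V[G_U] \subseteq V[G]$ are both models of ZFC with the same ordinals, the latter a forcing extension of the former.

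Second I would apply absoluteness. Fix a $\Sigma_1$-formula $\sigma$ and a $\Pi_1$-formula $\pi$ such that ZFC proves the equivalences $\phi \leftrightarrow \sigma$ and $\phi \leftrightarrow \pi$. As $\Sigma_1$-formulas are upward absolute and $\Pi_1$-formulas downward absolute between $V[G_U]$ and its extension $V[G]$, chasing through these equivalences yields
\[
V[G_U] \models \phi(\lusim{a}_1[G_U], \dots, \lusim{a}_n[G_U]) \iff V[G] \models \phi(\lusim{a}_1[G], \dots, \lusim{a}_n[G]).
\]
With this in hand both directions of the lemma are immediate. For the implication from $p_0 \Vdash_{Add(\omega,U)} \phi$: given any $Add(\omega,\lambda)$-generic $G \ni p$, the factor $G_U$ contains $p_0$, so $V[G_U] \models \phi$, and absoluteness lifts this to $V[G]$; as $G$ was arbitrary, $p \Vdash_{Add(\omega,\lambda)} \phi$. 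For the converse, assume $p \Vdash_{Add(\omega,\lambda)} \phi$ and let $G_U \ni p_0$ be $Add(\omega,U)$-generic; choosing $G'$ to be $Add(\omega,\lambda \setminus U)$-generic over $V[G_U]$ with $p_1 \in G'$ produces a $G \ni p$, whence $V[G] \models \phi$, and absoluteness brings it down to $V[G_U]$; as $G_U$ was arbitrary, $p_0 \Vdash_{Add(\omega,U)} \phi$.

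I do not anticipate a genuine obstacle: the whole content is the marriage of the product decomposition with standard L\'evy--Shoenfield absoluteness. The only points that require care are verifying that the two restrictions of a condition may be chosen independently (so that both the product lemma applies and $p_1$ can be freely extended into $G'$ over $V[G_U]$), and confirming that the interpretation of each $\lusim{a}_i$ genuinely factors through $G_U$. Both follow at once from the finiteness of conditions and from the hypothesis that the $\lusim{a}_i$ are $Add(\omega,U)$-names.
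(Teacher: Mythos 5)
Your argument is correct and is exactly the absoluteness argument the paper alludes to (the paper gives no details, stating only that the lemma ``follows easily by an absoluteness argument''): you supply the standard product decomposition $Add(\omega,\lambda)\cong Add(\omega,U)\times Add(\omega,\lambda\setminus U)$, note that the names are interpreted in $V[G_U]$, and use upward/downward absoluteness of $\Sigma_1$ and $\Pi_1$ formulas between $V[G_U]$ and $V[G]$. Nothing is missing.
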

We also need the following simple observation.
\begin{lemma}
Let $\mathfrak{D}$ be defined by $\mathfrak{D}(A)=$the club filter on $[A]^{<\kappa}$. The following are equivalent:
\begin{itemize}
\item [$(a)$] $Princ(\kappa)$.
\item [$(b)$] For all large enough $\chi > \kappa$ and $x \in H(\chi)$, there exists $N \in  N^1_{\kappa, \kappa,  \chi}$ such that $x \in N.$
\end{itemize}
\end{lemma}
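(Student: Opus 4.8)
The plan is to prove the two implications separately, the forward one being an immediate consequence of stationarity and the reverse one requiring a reflection argument between two levels $H(\chi)\in H(\chi')$. For $(a)\Rightarrow(b)$, fix a large enough $\chi$ and some $x\in H(\chi)$, and observe that
$C_x=\{a\in [H(\chi)]^{<\kappa}: x\in a\}$
is a club subset of $[H(\chi)]^{<\kappa}$: it is clearly $\subseteq$-unbounded (adjoin $x$ to any $a$) and closed under increasing unions of length $<\kappa$. Since $Princ(\kappa)$ asserts that $\mathbf{N}^1_{\kappa,\kappa,\chi}$ is $\mathfrak{D}(H(\chi))$-positive, i.e.\ stationary, it must meet the club $C_x$, and any $N$ in the intersection is the desired member of $\mathbf{N}^1_{\kappa,\kappa,\chi}$ with $x\in N$.

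For $(b)\Rightarrow(a)$, I would use the standard characterization that $\mathbf{N}^1_{\kappa,\kappa,\chi}$ is stationary in $[H(\chi)]^{<\kappa}$ iff for every $F\colon [H(\chi)]^{<\omega}\to H(\chi)$ there is some $N\in \mathbf{N}^1_{\kappa,\kappa,\chi}$ closed under $F$. So fix such an $F$, choose $\chi'>\chi$ large enough that $(b)$ applies at $\chi'$ and $H(\chi),F\in H(\chi')$, and let $x\in H(\chi')$ code the pair $(H(\chi),F)$. By $(b)$ there is $N'\in \mathbf{N}^1_{\kappa,\kappa,\chi'}$ with $x\in N'$, hence $H(\chi),F\in N'$. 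I would then set $N=N'\cap H(\chi)$. Since $H(\chi)\in N'\prec H(\chi')$, a Tarski--Vaught argument gives $N\prec H(\chi)$; and since $F\in N'$ maps into $H(\chi)$, the set $N$ is closed under $F$.

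The crux, and the step I expect to require genuine care, is verifying that $N\in \mathbf{N}^1_{\kappa,\kappa,\chi}$, where the key fact is that the two relevant cardinalities coincide. Writing $\delta=N'\cap\kappa=N\cap\kappa$ (equal because $\kappa\subseteq H(\chi)$), clause $(1)$ for $N'$ forces $|N'|=|\delta|$, and since $\delta\subseteq N\subseteq N'$ we also get $|N|=|\delta|=|N'|$; thus clause $(1)$ holds for $N$ and, crucially, $\min\{|N|^+,\kappa\}=\min\{|N'|^+,\kappa\}$. For the witness clause $(2)$, given $a\in P(\omega)$ take an $N'$-witness $P\in N'$ for $a$; as $P\subseteq P(\omega)$ has size $<\kappa$ we have $P\in H(\chi)$, whence $P\in N$, and because $P(\omega)\cap N=P(\omega)\cap N'$ (as $P(\omega)\subseteq H(\chi)$) the witnessing property of $P$ transfers verbatim, with the size bound preserved by the cardinality computation above. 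Hence $N\in \mathbf{N}^1_{\kappa,\kappa,\chi}$ is closed under $F$; as $F$ was arbitrary, $\mathbf{N}^1_{\kappa,\kappa,\chi}$ meets every club and so is stationary, which is exactly $Princ(\kappa)$.
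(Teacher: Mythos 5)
Your forward direction and the skeleton of your reverse direction --- reflecting from $H(\chi')$ down to $N = N' \cap H(\chi)$ and verifying $N \in \mathbf{N}^1_{\kappa,\kappa,\chi}$ via $|N| = |N'| = |N' \cap \kappa|$ together with the transfer of witnesses $P$ --- coincide with the paper's proof and are correct; the cardinality computation you single out as the crux is indeed fine. The step you treat as routine is the one that does not hold up as stated: you invoke ``the standard characterization'' that $\mathbf{N}^1_{\kappa,\kappa,\chi}$ is stationary in $[H(\chi)]^{<\kappa}$ iff it meets every closure set $C_F$ of a function $F \colon [H(\chi)]^{<\omega} \to H(\chi)$. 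That characterization is Kueker's theorem and is standard only for $\kappa = \aleph_1$. For a general regular uncountable $\kappa$ (and the lemma is stated for general $\kappa$), the club filter on $[A]^{<\kappa}$ with $|A| > \kappa$ is generated by the closure sets of functions $F \colon [A]^{<\omega} \to [A]^{<\kappa}$ (Menas), not of point-valued functions: the closure of a finite set under a point-valued $F$ is countable, so $C_F$ always contains countable sets and hence is never contained in, say, the club of all uncountable $M$ when $\kappa \geq \aleph_2$. Your argument can be repaired, either by working with set-valued $F$ (and then checking $F(e) \subseteq N$ from $F(e) \in N'$, $|F(e)| \in N' \cap \kappa$ and $N' \cap \kappa \in \kappa$), or by carrying out the genuinely nontrivial reduction from set-valued to point-valued functions, which must exploit the restriction to models with $N \cap \kappa \in \kappa$; but as written this is a gap, not a citation of a standard fact.

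The paper sidesteps the issue entirely: given a club $C \subseteq [H(\chi)]^{<\kappa}$, it puts $C$ itself (rather than a function) into $N'$, observes that $N = N' \cap H(\chi) = \bigcup (N' \cap C)$ because each $M \in N' \cap C$ has $|M| \in N' \cap \kappa$ and is therefore a subset of $N'$, and then concludes $N \in C$ from the closedness of $C$ applied to the directed subfamily $N' \cap C$ of size $< \kappa$. You may want to adopt that route; it requires no characterization of the club filter at all.
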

\begin{proof}
It is clear that $(a) \implies (b)$. To show that $(b)$ implies $(a)$, let $\chi > \kappa$ be large enough regular, $x \in H(\chi)$ and let
$C \subseteq [H(\chi)]^{< \kappa}$ be a club set. We need to show that $N^1_{\kappa, \kappa,  \chi} \cap C \neq \emptyset.$ We assume $|M| \leq M\cap \kappa$
for all $M \in C.$ Take $\chi' > \chi$ large enough regular. By the assumption, we can find $N' \in N^1_{\kappa, \kappa,  \chi'}$
such that $x, C \in N'.$ Let $N= N' \cap H(\chi)$. Then by elementarity, $N \in N^1_{\kappa, \kappa,  \chi}$
and $N= \bigcup (N' \cap C).$ Since $C$ is closed, $N \in C$ and so $N \in N^1_{\kappa, \kappa,  \chi} \cap C,$
as required.
\end{proof}

We are now ready to show that  $Princ(\kappa)$ holds in the generic extension by Cohen forcing. We follow the proof in \cite{fuchino}.
\begin{theorem}
\label{forcign with add special case}
Assume $\lambda \geq \kappa= cf(\kappa) > 2^{\aleph_0}$.
Then $\Vdash_{Add(\omega, \lambda)}$``$Princ(\kappa)$''.
\end{theorem}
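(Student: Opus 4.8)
The plan is to reduce to the criterion of the lemma immediately preceding the theorem: it suffices to show that in $V[G]$ (with $G$ generic for $Add(\omega,\lambda)$) every $x\in H(\chi)$, $\chi$ large regular, lies in some $N\in\mathbf{N}^1_{\kappa,\kappa,\chi}$. Fix $x$ with a name $\lusim{x}\in V$. Working in $V$, I would build by a routine $\omega_1$-chain construction a model $M\prec H(\chi)$ with $\lusim{x},Add(\omega,\lambda)\in M$, with $[M]^{\aleph_0}\subseteq M$, with $|M|=2^{\aleph_0}$ and with $M\cap\kappa\in\kappa$ (possible since $2^{\aleph_0}<\kappa$ and $\kappa\ge\aleph_2$); then set $N=M[G]$. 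Because every name $\lusim{y}\in M$ has countable support, and a countable subset of $M$ is a subset of $M$, each $\supp(\lusim{y})\subseteq M\cap\lambda$, so $N$ depends only on $G_0:=G\upharpoonright(\omega\times(M\cap\lambda))$, i.e. $N=M[G_0]$, and $N\prec H(\chi)^{V[G]}$ by the standard theorem on generic extensions of elementary submodels. A ccc counting argument (a name for an ordinal has only countably many possible values) gives $N\cap\kappa=M\cap\kappa\in\kappa$ and $|N|=2^{\aleph_0}\le N\cap\kappa$, so clause $(1)$ of $\mathbf{N}^1$ holds and $x\in N$. The entire work is clause $(2)$, the witness property.

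The pivotal preliminary, and the one place where $[M]^{\aleph_0}\subseteq M$ is used, is the absorption fact: \emph{every real of $V[G_0]$ lies in $N$}. Indeed, a nice $Add(\omega,M\cap\lambda)$-name $\lusim{z}$ for a real has countable support $w\subseteq M\cap\lambda$ and countable antichains; by $\omega$-closure $w\in M$, each relevant finite condition lies in $M$, each antichain lies in $[M]^{\aleph_0}\subseteq M$, and the $\omega$-sequence of antichains lies in $M$, so $\lusim{z}\in M$ and $z\in N$. I would then fix an arbitrary real $a\in V[G]$ with nice name $\lusim{a}=\bigcup_n\{\check n\}\times A_n$ and support $u$, put $u_1=u\setminus M$, and for each $q\in Add(\omega,u_1)$ define in $V[G_0]$
\[
c_q=\{\,n<\omega:\ \exists r\in A_n\ \bigl(r\upharpoonright(\omega\times(M\cap\lambda))\in G_0\ \wedge\ r\upharpoonright(\omega\times u_1)\compatible q\bigr)\,\},
\]
and $\mathcal{C}=\{c_q:q\in Add(\omega,u_1)\}$, a countable family. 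To see $\mathcal{C}\in N$, note that the sequence $\langle\{\,r\upharpoonright(\omega\times u_1):r\in A_n,\ r\upharpoonright(\omega\times(M\cap\lambda))\in G_0\,\}:n<\omega\rangle$, transported along any bijection $u_1\leftrightarrow\omega$, is coded by a single real $\rho\in V[G_0]$; by the absorption fact $\rho\in N$, and $\mathcal{C}$ is definable in $N$ from $\rho$ and $Add(\omega,\omega)$, since each $c_q$ depends only on the compatibility pattern and not on the chosen bijection. Thus $\mathcal{C}\in N$ and $|\mathcal{C}|=\aleph_0<\min\{|N|^+,\lambda\}$.

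It remains to verify that $\mathcal{C}$ is an $N$-witness for $a$. Let $b\in P(\omega)\cap N$ with $a\subseteq b$, and pick a name $\lusim{b}\in M$ for $b$, so $\supp(\lusim{b})\subseteq M\cap\lambda$. Choose $p\in G$ with $\supp(p)\subseteq u\cup\supp(\lusim{b})$ and $p\Vdash_{Add(\omega,\lambda)}\lusim{a}\subseteq\lusim{b}$, and set $q=p\upharpoonright(\omega\times u_1)\in G$. Then $a\subseteq c_q$, because any $r\in A_n\cap G$ has $r\upharpoonright(\omega\times u_1)\in G$ compatible with $q\in G$; and $c_q\subseteq b$, because if $n\in c_q$ is witnessed by $r$, then $r\cup p\le p$ forces $\check n\in\lusim{a}\subseteq\lusim{b}$, so by the $\Delta^{\text{ZFC}}_1$-absoluteness lemma its restriction to $\omega\times(M\cap\lambda)$, which lies in $G_0\subseteq G$, already forces $\check n\in\lusim{b}$, whence $n\in b$. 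Since $q\in Add(\omega,u_1)$ we have $c_q\in\mathcal{C}$, and $a\subseteq c_q\subseteq b$, so $\mathcal{C}$ witnesses $a$; this completes clause $(2)$ and, by the preceding lemma, the theorem.

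The main obstacle is the tension between the two demands in clause $(2)$: the witness family must be an \emph{element} of $N$ yet have size $<\min\{|N|^+,\lambda\}$, which, since $\lambda\ge\kappa>|N|$, forces it to be a \emph{subset} of $N$ and rules out the naive candidates $\{b\in N:a\subseteq b\}$ or the full interval of sets squeezed between $a$'s $V[G_0]$-decided part and its upper approximation, both of which have size $2^{\aleph_0}\ge\kappa$ in $V[G]$. The device that resolves this is to index the approximations by the \emph{countable} forcing $Add(\omega,u_1)$ rather than by $b$, so that a single countable $\mathcal{C}$ serves all $b\in N$ above $a$ simultaneously, combined with the absorption fact that pulls the defining code $\rho$ back into $N$. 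Getting both $[M]^{\aleph_0}\subseteq M$ and $M\cap\kappa\in\kappa$ for $M$ of size $2^{\aleph_0}$, and checking that $\mathcal{C}$ is genuinely coded inside $N$, are the delicate points I expect to require the most care.
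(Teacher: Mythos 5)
Your proof is correct, but it takes a genuinely different route from the paper's. The paper builds $N$ as the union of a continuous chain $\langle N_i : i<\delta\rangle$ of countably closed models precisely so that, given a real $a$ with support $U$, it can choose an intermediate $M=N_{i+1}$ and an automorphism $\pi$ of $Add(\omega,\lambda)$ fixing $U\cap N$ pointwise and mapping $U$ and $N\cap\lambda$ into $M$; the witness family is then $\{ \lusim{c}_r[G] : r\in Add(\omega,\lambda\cap M\setminus U)\}$, of size $|M|<\kappa$, where $\lusim{c}_r$ approximates $\pi(\lusim{a})$ from above using the generic on $U\cap N$. You dispense with the automorphism (and hence with any appeal to homogeneity) entirely: your key observation is the absorption fact $P(\omega)^{V[G_0]}\subseteq N$ for $G_0=G\upharpoonright(\omega\times(M\cap\lambda))$, which follows from $[M]^{\aleph_0}\subseteq M$ together with the ccc, and which lets you pull the code $\rho$ --- computed from the antichains of $\lusim{a}$ and from $G_0$, even though no name for it need lie in $M$ in advance --- back into $N$ after the fact. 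Indexing the approximations by the countable forcing $Add(\omega,u\setminus M)$ on the new part of the support then yields a countable witness family, which is even quantitatively stronger than what the paper's argument produces (it gives $Princ(\kappa,\aleph_1)$ rather than just $Princ(\kappa,\kappa)$ in the notation of Section 2). One imprecision should be repaired: the identification $N=M[G_0]$ is false in general for $N=M[G]$ (the canonical name for the full generic filter lies in $M$ and has unbounded support), and $M[G_0]$ by itself need not be elementary in $H(\chi)^{V[G]}$; what is true, and is all your argument actually uses, is that $P(\omega)\cap M[G]=P(\omega)\cap M[G_0]=P(\omega)^{V[G_0]}$, since every real of $M[G]$ has a nice name in $M$ whose countable support is contained in $M\cap\lambda$. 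With that restatement the argument is complete.
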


\begin{proof}
Let $\chi  > \lambda$ and $p \in Add(\omega, \lambda)$ be such that
\begin{center}
$p \Vdash_{Add(\omega, \lambda)}$``$\lusim{X}$ has transitive closure of cardinality $< \chi$''.
\end{center}
Let $\langle N_i : i < \delta     \rangle$ be a sequence of elementary submodels of $(H(\chi), \in)$ such that:
\begin{enumerate}
\item $\delta < \kappa, cf(\delta) > \aleph_0,$
\item  $\langle N_i : i < \delta     \rangle$  is increasing continuous,
\item $N_i \cap \kappa \in \kappa,$
\item  $\langle N_j : j \leq i     \rangle \in N_{i+1},$
\item Each $N_{i+1}, i < \delta,$ is closed under countable sequences,
\item $|N_i| < \kappa,$
\item $p, \lambda, \lusim{X} \in N_0.$
\end{enumerate}
Let $N=\bigcup_{i<\delta}N_i.$ As $cf(\delta)> \aleph_0$, it follows from clause $(5)$ that $N$ is closed under countable sequences.

We show that $p \Vdash$`` $\bigcup_{i<\delta}N_i[\dot{G}] \in N^1_{\kappa, \kappa, \chi}$''.
Let  $G$ be $Add(\omega, \lambda)$-generic over $V$ with $p \in G$,  $N^*_i=N_i[G], i < \delta,$ and $N^*= \bigcup_{i<\delta}N^*_i.$ Note that $N^*$ is closed under countable sequences and $\lusim{x}[G] \in N^*$. We show that $N^* \in (N^1_{\kappa, \kappa,  \chi})^{V[G]}$. This will complete the proof by the previous lemma.

Thus assume that $a \in P(\omega)^{V[G]}$ and let $\lusim{a}$ be a nice name for $a$. Let $U=\supp(\lusim{a})$.

Set $U_1=U \cap N$ and $U_2=U \setminus N.$ Then $U_1 \in N.$
Let $i<\delta$ be sufficiently large such that $U_1 \subseteq N_i$ and $|N \cap \lambda|=|N_i \cap \lambda|$\footnote{Such an $i$ exists as $U=\supp(\lusim{a})$ is a countable set
and $cf(\delta)> \aleph_0$.} and set $M= N_{i+1}.$ It follows from $(5)$ that $U_1 \in M.$

Let $\pi: \lambda \simeq \lambda$ be a bijection such that  $\pi[U] \subseteq M$, $\pi \upharpoonright U_1 = id \upharpoonright U_1$ and $\pi[\lambda \cap N] \subseteq M$. Using the homogeneity of the forcing $Add(\omega, \lambda)$,
  extend $\pi$
to an isomorphism $\pi: Add(\omega, \lambda) \simeq Add(\omega, \lambda)$. Note that this also induces an isomorphism of the class of all $Add(\omega, \lambda)$-names, $V^{Add(\omega, \lambda)}$, that we still denote it  by $\pi.$
Note that $\pi[U], \pi(\lusim{a}) \in M$, as $M$ is closed under countable sequences.
Let
\begin{center}
$P= \{ \lusim{c}_r[G]: r \in Add(\omega, \lambda \cap M \setminus U) \},$
\end{center}
where for $r \in Add(\omega, \lambda \cap M \setminus U)$,
\begin{center}
$\Vdash_{\MPB}$``$\lusim{c}_r = \omega \setminus \{n \in \omega: \exists q \in \dot{G} \cap Add(\omega, U_1),~ r \cup q \Vdash$``$n \notin \pi(\lusim{a})$''$            \}$''.
\end{center}
Note that $\lambda \cap M \setminus U = \lambda \cap M \setminus U \cap M \in N,$ so $ Add(\omega, \lambda \cap M \setminus U) \in N.$ Also $U_1 \in N$
so $Add(\omega, U_1) \in N.$ It easily follows that
 $P \in N^*$. It is also clear that $P \subseteq P(\omega)^{V[G]}$ and $|P| \leq |Add(\omega, \lambda \cap M \setminus U)| = |M| < \kappa.$

 To show that $P$ is an $N^*$-witness for $a$,
let $b \in P(\omega)^{V[G]} \cap N^*$ and $b \supseteq a.$ Let $\lusim{b} \in N$ be a nice name for $b$ and let $W=\supp(\lusim{b}).$  Let $p^* \leq p$ be such that $p^* \Vdash$``$\lusim{b} \supseteq \lusim{a}$''.
By Lemma 4.1, we may suppose that
$p^* \in Add(\omega, U \cup W).$
Let
\begin{center}
 $r=\pi(p^* \upharpoonright \omega \times (\lambda  \setminus U_1))$.
 \end{center}
 Then $r \in Add(\omega, \lambda \cap M \setminus U).$ We complete the proof by showing that
 $a \subseteq \lusim{c}_r[G] \subseteq b$.
 \\
{\bf \underline{$a \subseteq \lusim{c}_r[G]$} :} Assume by contradiction that $n \in a \setminus \lusim{c}_r[G]$. Let $q \in G, q \leq p^*$ and
\[
q \Vdash\text{~``~}n \in \lusim{a}\text{~''~}.
\]
Again we can suppose that $q \in Add(\omega, U \cup W).$ As $n \notin \lusim{c}_r[G],$ we can find $q^* \in G \cap Add(\omega, U_1)$ such that
\[
r \cup q^* \Vdash\text{~``~} n \notin \pi(\lusim{a})   \text{~''.~}
\]
This implies
\[
\pi^{-1}(r) \cup \pi^{-1}(q^*) \Vdash\text{~``~} n \notin \lusim{a}   \text{~''.~}
\]
Note that $\pi^{-1}(r) \cup \pi^{-1}(q^*)=p^* \upharpoonright \omega \times (\lambda  \setminus U_1) \cup q^*$. As $q^*, q \upharpoonright \omega \times U_1 \in G \cap Add(\omega, U_1),$ they are compatible, and we can easily conclude that $q$ and $p^* \upharpoonright \omega \times (\lambda  \setminus U_1) \cup q^*$
are compatible, which is a contradiction, as they decide the statement ``$n \in \lusim{a}$'' in different ways.
\\
{\bf \underline{$\lusim{c}_r[G] \subseteq b$} :} Suppose by contradiction that there is some $n \in \lusim{c}_r[G] \setminus b$.
Let $q \in G, q \leq p^*,$ be such that $q \Vdash$``$n \notin \lusim{b}$''. We can suppose that $q \in Add(\omega, U \cup W)$
and $q \restriction U_2 = p^* \restriction U_2.$
As  $p^* \Vdash$``$\lusim{b} \supseteq \lusim{a}$'', we have  $q \Vdash$``$n \notin \lusim{a}$'',
and hence
  $q \upharpoonright U \Vdash$``$n \notin \lusim{a}$''. Applying $\pi,$ we have
\begin{center}
$\pi(q \upharpoonright U) \Vdash$``$n \notin \pi(\lusim{a})$''.
\end{center}
Hence
\begin{center}
$q \restriction U_1 \cup \pi(q \restriction U_2) \Vdash$``$n \notin \pi(\lusim{a})$'',
\end{center}
which implies
\begin{center}
$q \restriction U_1 \cup \pi(p^* \restriction U_2) \Vdash$``$n \notin \pi(\lusim{a})$'',
\end{center}
Now observe that $r \leq \pi(p^* \restriction U_2)$ and $r$ is compatible with $q \restriction U_1$, so
\begin{center}
$r \cup q \restriction U_1 \Vdash$``$n \notin \pi(\lusim{a})$''.
\end{center}
Thus, $r \cup q \upharpoonright U_1$  witnesses  $n \notin \lusim{c}_r[G]$,
which is a contradiction.

The theorem follows.
\end{proof}
The next theorem can be proved as in Theorem \ref{forcign with add special case}.
\begin{theorem}
Assume $\theta<\lambda\leq \kappa=cf(\kappa)$  and $\mathfrak{D}$ is a $\kappa$-definition of normal filters. Then $\Vdash_{Add(\omega, \kappa)}$``$Princ_1(\kappa, \lambda, \mathfrak{D})+Princ_{2,\theta}(\kappa, \lambda, \mathfrak{D})$''. The same result holds with $\mathfrak{D}^+$ replaced with $\mathfrak{D}$.
\end{theorem}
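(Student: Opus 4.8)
The plan is to follow the proof of Theorem~\ref{forcign with add special case} verbatim in its overall shape, making three substitutions: the club filter is replaced by the given normal filter $\mathfrak{D}$, the cardinal bound $\kappa$ on the size of the witness is replaced by $\min\{|N|^+,\lambda\}$, and for $Princ_{2,\theta}$ a whole $\theta$-sequence of reals is witnessed at once. Work in $V[G]$ for $G$ generic over $Add(\omega,\kappa)$, fix a large regular $\chi$, and note that $\mathfrak{D}(H(\chi))$ is a fine normal filter on $[H(\chi)]^{<\kappa}$. As before I would build an increasing continuous tower $\langle N_i:i<\delta\rangle$ of elementary submodels of $(H(\chi),\in)$ obeying clauses $(1)$--$(7)$ of Theorem~\ref{forcign with add special case}, now calibrated to the new parameters: $\delta<\kappa$ with $\cf(\delta)>\theta$, each $N_{i+1}$ closed under $\le\theta$-sequences, $|N_i|<\lambda$, and $p,\kappa,\mathfrak{D}$ together with the relevant names in $N_0$. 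Setting $N^*=\bigcup_{i<\delta}N_i[G]$, the choice $\cf(\delta)>\theta$ with $\theta$-closed stages makes $N^*$ closed under $\le\theta$-sequences, with $|N^*|\le N^*\cap\kappa\in\kappa$. The goal is to show $N^*\in\mathbf{N}^1_{\kappa,\lambda,\chi}$ (resp.\ $\mathbf{N}^2_{\kappa,\lambda,\theta,\chi}$) and that the family of such $N^*$ is $\mathfrak{D}(H(\chi))$-positive (for the principles as stated) and even $\mathfrak{D}(H(\chi))$-large (for the version with $\mathfrak{D}^+$ replaced by $\mathfrak{D}$).

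For the witness, given a $\theta$-sequence $\langle a_\xi:\xi<\theta\rangle$ in $V[G]$ I would fix a single nice name in $V$ for it, extract coordinate names $\lusim{a}_\xi$, and set $U=\bigcup_{\xi<\theta}\supp(\lusim{a}_\xi)$, so that $U\in V$ and $|U|\le\theta<\lambda$. Split $U=U_1\cup U_2$ with $U_1=U\cap N$; since $\cf(\delta)>\theta$ there is $i<\delta$ with $U_1\subseteq N_i$, hence $U_1=U\cap N_i\in M$ where $M=N_{i+1}$. Using the homogeneity of $Add(\omega,\kappa)$, choose an automorphism $\pi$ with $\pi\restriction U_1=\id$, $\pi[U]\subseteq M$ and $\pi[\kappa\cap N]\subseteq M$; since $M$ is $\theta$-closed, $\pi[U]$ and each $\pi(\lusim{a}_\xi)$ lie in $M$. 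Define
\[
P=\{\,\lusim{c}_{r,\xi}[G]:r\in Add(\omega,\kappa\cap M\setminus U),\ \xi<\theta\,\},
\]
where $\lusim{c}_{r,\xi}$ is given from $\pi(\lusim{a}_\xi)$ by the very formula defining $\lusim{c}_r$ in Theorem~\ref{forcign with add special case}. Then $P\in N^*$, $P\subseteq P(\omega)^{V[G]}$, and $|P|\le|M|\cdot\theta<\min\{|N^*|^+,\lambda\}$. That $P$ is an $N^*$-witness for every $a_\xi$ simultaneously reduces, for each fixed $\xi$ and each $b\in P(\omega)^{V[G]}\cap N^*$ with $a_\xi\subseteq b$, to producing $r$ with $a_\xi\subseteq\lusim{c}_{r,\xi}[G]\subseteq b$; the two inclusions are verified by the identical compatibility-of-conditions argument of the special case, applied to $\pi(\lusim{a}_\xi)$, since the action of $\pi$ on a single coordinate name is exactly as there.

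The hard part is keeping the auxiliary model $M$ simultaneously small and closed enough. Because the support $U$ now has size up to $\theta$ rather than $\aleph_0$, absorbing $\pi[U]$ and the images $\pi(\lusim{a}_\xi)$ into $M$ requires $M$ to be $\theta$-closed, while the witness bound forces $|M|<\lambda$; these two demands coexist only when $2^\theta<\lambda$, and the boundary case $\lambda=\kappa=\theta^+$ (where no $\theta$-closed model of size $<\lambda$ exists) must be handled by a separate device, e.g.\ by witnessing the $\theta$-sequence through the $\omega$-closed pieces of its name rather than through a single $\theta$-closed $M$. A secondary point is to upgrade club-stationarity to $\mathfrak{D}$-positivity for the \emph{arbitrary} normal $\mathfrak{D}$: for the stated principles this follows by running the tower construction inside a prescribed $\mathfrak{D}(H(\chi))$-positive set, using the coherence clause of Definition~2.2 to pass between $\chi$ and a larger $\chi'$ and the pressing-down property recorded after Definition~2.1; for the strengthening one instead notes that a fine normal filter contains the club filter, so it suffices that every sufficiently closed $N^*$ be good, which is precisely what the construction yields.
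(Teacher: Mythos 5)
Your overall route is the one the paper intends: its entire proof of this theorem is the remark that it ``can be proved as in Theorem 4.3,'' and your adaptation (an increasing tower of length of uncountable cofinality, plus the observation that a fine normal $\mathfrak{D}(H(\chi))$ contains the club filter so that club-many good $N^*$ suffice) is fine for the $Princ_1$ clause. The trouble is concentrated in the $Princ_{2,\theta}$ clause, and the difficulty you flag there is misdiagnosed. Note first that your two demands on $M$ already conflict before any closure enters: you require $\pi[\kappa\cap N]\subseteq M$ (so that the twist of an arbitrary $\lusim{b}\in N$ lands in $M$), which forces $|M|\geq |N\cap\kappa|$, while your bound $|P|\leq |M|\cdot\theta$ forces $|M|<\lambda$; in the only genuinely new case of the theorem, namely $\lambda\leq |N\cap\kappa|$, these cannot coexist. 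But this tension is spurious: $r$ need not range over all of $Add(\omega,\kappa\cap M\setminus U)$. In the verification only $\pi(p^*\restriction \omega\times U_2)$ matters (the part of $p^*$ on $W\setminus U$ contributes nothing to forcing statements about $\pi(\lusim{a}_\xi)$), so one may let $r$ range over $Add(\omega,\pi[U\setminus N])$, a set of size $\leq\theta+\aleph_0<\lambda$, giving $|P|<\min\{|N|^+,\lambda\}$ with no smallness demand on $M$ whatsoever. Hence $2^\theta<\lambda$ is not the relevant threshold and no ``boundary case $\lambda=\theta^+$'' device is needed for the size of $P$.

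The genuine gap lies elsewhere. For $P=\{\lusim{c}_{r,\xi}[G]: r,\ \xi<\theta\}$ to be an \emph{element} of $N^*=N[G]$ you must exhibit a name for it lying in $N$, and that name is built from the $\theta$-sequence $\langle \pi(\lusim{a}_\xi):\xi<\theta\rangle$ (equivalently from a single master name of hereditary size $\theta$). Putting each individual $\pi(\lusim{a}_\xi)$ into $M$ costs only countable closure, but putting the sequence itself into $N$ as one object requires $N$ to be closed under $\theta$-sequences in $V$, hence $2^\theta\leq |N|<\kappa$ --- a ground-model cardinal-arithmetic hypothesis that neither the theorem statement nor your proposal supplies, and one about $\kappa$, not $\lambda$. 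Your proposed escape (``witnessing the $\theta$-sequence through the $\omega$-closed pieces of its name'') is not an argument: some single set of size $\theta$ coding the twisted names must end up as an element of $N$, and you have not said how. As written, the $Princ_{2,\theta}$ half of your proof is incomplete without something like $2^{\theta}<\kappa$; to be fair, the paper's one-line proof has the same lacuna.
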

\begin{remark}
In $V[G], \mathfrak{D}$ is defined as follows: For any large enough $\chi > \kappa, \mathfrak{D}(H^{V[G]}(\chi))$ is the filter generated by $\{     N[G]: N\in X\},$ where $X\in \mathfrak{D}(H(\chi)).$ Note that $N \prec H(\chi) \Rightarrow N[G] \prec H(\chi)[G]=H(\chi)^{V[G]},$ and so the above definition is well-defined.
\end{remark}

\section{On a question of Juhasz- Kunen}
In this section we answer a question of Juhasz- Kunen \cite{juhasz-kunen}, by showing that for $n\geq 2,$ $C_n(\aleph_2) \nRightarrow C_{n+1}(\aleph_2).$ In fact we prove the following  stronger result.
\begin{remark}
The results of this section are stated and proved for the ideal $J=[\omega]^{<\omega}$; but all of them are valid if we also assume $J=\{\emptyset\}.$
\end{remark}
\begin{theorem}
Assume:

$(1)$ $\aleph_0 < \theta=\theta^{<\theta} < \kappa = cf(\kappa)< \chi$ and $\forall \alpha < \chi (|\alpha|^{<\theta} < \chi),$

$(2)$ $D$ is a $\kappa$-complete filter on $\kappa$ which satisfies the $\Delta$-system $\theta$-property (see below for

$\hspace{0.5cm}$the definition) and contains the co-bounded subsets of $\kappa$, and $J=[\omega]^{<\omega}$,

$(3)$ $2 < n(*) < \omega.$

Then there is a cofinality preserving generic extension of the universe in which  $C^D_n(\kappa, J)$ holds if $n<n(*),$
and   fails if $n=n(*).$\footnote{When working in a forcing extension, we use $D$ to denote  the filter generated by $D$ in that extension.}
\end{theorem}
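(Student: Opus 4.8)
The plan is to force with a single, bespoke poset $\mathbb{P}=\mathbb{P}_{n(*)}$ that generically adds one $(\kappa\times\theta)$-matrix $\bar A=\langle a_{\alpha,\xi}:\alpha<\kappa,\ \xi<\theta\rangle$ of subsets of $\omega$ engineered to kill the dichotomy of Definition 3.1 at level $n(*)$, while leaving \emph{every} matrix well behaved for trees of height below $n(*)$. I fix once and for all the single-branch tree $T_*=\{t_*\restriction i : i\le n(*)\}$ with $t_*(i)=i$; this is legitimate since $n(*)<\aleph_0<\theta$, and because $C^D_{n(*)}(\kappa,J)$ quantifies over all $T\subseteq\theta^{\le n(*)}$ it suffices to make the dichotomy fail for this one $T_*$ and for $\bar A$. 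Writing $b_\tau=\bigcap_{i<n(*)}a_{\tau(i),i}$ for an injective tuple $\tau=(\alpha_0,\dots,\alpha_{n(*)-1})$, I want the generic $\bar A$ to satisfy: (a$'$) for every $S\in D^+$ there is an injective $\tau$ into $S$ with $b_\tau$ finite, which negates clause $(\alpha)$ for $T_*$; and (b$'$) for all $S_0,\dots,S_{n(*)-1}\in D^+$ there is an injective $\tau$ with $\tau(i)\in S_i$ and $b_\tau$ infinite, which negates clause $(\beta)$ for the top node of $T_*$. The lower nodes of $T_*$ are then automatically non-obstructing, since no finiteness is ever committed on a proper initial segment and the same eviction freedom used to honour the ``infinite'' promises keeps those shorter intersections infinite.

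A condition $p\in\mathbb{P}$ will be a finite object of two kinds: a finite partial matrix (a finite function deciding membership of finitely many integers in finitely many $a_{\alpha,\xi}$) together with a finite set of \emph{typed promises}, each promise being an injective $n(*)$-tuple $\tau$ tagged either ``$b_\tau$ infinite'' or ``$b_\tau\subseteq N$'' for some $N<\omega$; a condition is required to be internally consistent and extension must honour all promises. This two-typed promise device is exactly what lets finite conditions nonetheless force an intersection finite: a ``$b_\tau\subseteq N$'' promise commits every extension, for each $k>N$, to evict $k$ from at least one coordinate of $\tau$, and such evictions can always be arranged while respecting the finitely many ``infinite'' promises present, precisely because an ``infinite'' promise on a tuple $\tau'\ne\tau$ constrains the \emph{intersection} over $\tau'$, leaving a free coordinate to absorb the eviction. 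The generic matrix is read off from the partial-matrix parts.

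Three things then have to be proved. For \emph{cofinality preservation}, since conditions are finite and mention only finitely many ordinals below $\kappa$, a $\Delta$-system argument shows $\mathbb{P}$ is ccc (indeed of precaliber $\aleph_1$), hence $\kappa$-cc and cofinality preserving; the $\Delta$-system $\theta$-property of $D$ together with its $\kappa$-completeness is what guarantees that the ground-model $D^+$-sets remain positive and that names for positive sets reflect to ground-model $D^+$-sets, so that the quantifier ``$\in D^+$'' means the same after forcing. For \emph{failure at} $n(*)$: one shows (b$'$) is dense by, given names $\dot S_0,\dots,\dot S_{n(*)-1}$ for positive sets, reflecting them to ground-model positive sets, choosing an injective $\tau$ with $\tau(i)\in S_i$, and adding the promise ``$b_\tau$ infinite''; and one shows (a$'$) is dense by, given a name $\dot S$ and a condition $p$, finding an injective $\tau$ into $\dot S$ that is \emph{not} among the finitely many tuples $p$ already promised infinite, and adding ``$b_\tau\subseteq N$''. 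Both density claims hinge only on the finiteness of the promise sets against the unboundedness of the reflected $S$'s, so they are mutually compatible.

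The last and hardest part is \emph{success below} $n(*)$: for every $n<n(*)$, every $\mathbb{P}$-name $\dot{\bar B}$ for a matrix, and every $T\subseteq\theta^{\le n}$, the dichotomy must hold. The plan is a reflection argument in the spirit of the proof of Theorem 3.3: using $D$ and a suitable elementary submodel one produces a diagonal ordinal $\delta$ and a positive set $S$, then analyses the $n$-fold intersection pattern over $S$ and tries to run the downward induction that builds the witnessing sets $c_i$. The essential point, and the main obstacle, is to establish a genuine \emph{dimension gap}: any simultaneous failure of $(\alpha)$ and $(\beta)$ for a height-$\le n$ tree would require finiteness to be forced coherently across $n$ mutually independent coordinates, whereas all of $\mathbb{P}$'s finiteness promises are carried by full $n(*)$-tuples; since $n<n(*)$, two conditions sharing a $\Delta$-system root but differing on their at most $n$ active coordinates can always be amalgamated into a single condition forcing an infinite intersection, which drives the induction through and yields clause $(\alpha)$ or $(\beta)$. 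Turning this amalgamation heuristic into a rigorous fusion, uniformly over all names $\dot{\bar B}$ and all trees $T$, is the crux of the whole argument, and it is here that the hypotheses $\theta=\theta^{<\theta}$, the $\Delta$-system $\theta$-property of $D$, and $2<n(*)$ are all consumed.
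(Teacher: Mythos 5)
Your construction of the negative direction is recognizably close in spirit to the paper's: the paper also uses a finite-condition c.c.c.\ poset $\QQ_{\kappa,\mathcal{A}}$ adding a matrix $\langle \lusim{a}_{\alpha,n}\rangle$ in which $\bigcap_{i<n}\lusim{a}_{\alpha_i,m}$ is finite exactly when the tuple lies in a prescribed family $\mathcal{A}^+$, and it negates both clauses by reflecting names for $D^+$-sets to ground-model positive sets and amalgamating via the $\Delta$-system property (Lemma 5.11). The essential difference is that in the paper the ``promise set'' $\mathcal{A}$ is \emph{not} generic for the same poset: it is added beforehand by a separate $(<\theta)$-support preparatory iteration $\PP_\chi$ of length $\chi$, whose whole purpose is to arrange, by bookkeeping over all names, that $\mathcal{A}$ is \emph{free} over every $D$-positive $\Delta$-system of countable sets: for any positive $Y$ and any $\langle w_\alpha:\alpha\in Y\rangle$ with $w_\alpha\in[\kappa]^{\le\aleph_0}$ there is a positive refinement on which every $u\in\mathcal{A}$ contained in a union of $n(*)$ of the $w_\alpha$'s is contained in a single one. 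This freeness is exactly hypothesis $(4)$ of the paper's Lemma 5.12, and it is what makes the automorphism/amalgamation argument for the positive direction go through \emph{for arbitrary matrices} $\bar B$ of the extension, whose $\QQ$-names have countable supports $w_\alpha$ that need not respect your promised $n(*)$-tuples.

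This is precisely where your proposal has a genuine gap, and you concede it yourself: the ``dimension gap'' paragraph is a heuristic, not a proof. The difficulty is not merely technical. A matrix $\bar B$ in the extension is read off from names whose supports are countable sets $w_\alpha$, and a single $b_{\alpha,\xi}$ can depend on infinitely many of the generic columns $a_{\gamma,\zeta}$; nothing in your single-step forcing prevents a promised $n(*)$-tuple from being distributed across only $n<n(*)$ of the supports $w_{\alpha_0},\dots,w_{\alpha_{n-1}}$, in which case your amalgamation of two conditions agreeing on a $\Delta$-system root does \emph{not} produce a condition forcing the $n$-fold intersection infinite, and the downward induction of Theorem~3.3 stalls. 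Ruling this out for every name is not a fusion one can do inside a single c.c.c.\ poset chosen in advance: the paper needs the full iteration of length $\chi$, and the hypothesis $\forall\alpha<\chi\,(|\alpha|^{<\theta}<\chi)$ exists solely to make that bookkeeping and its $\theta^+$-c.c.\ work. Tellingly, your proposal never uses $\chi$ at all. In addition, making the promises generic creates a tension you do not resolve: the density argument that plants a finiteness promise inside every positive set must be shown compatible, globally, with the freeness needed for the positive direction, whereas the paper decouples the two by fixing $\mathcal{A}$ (with its freeness certified) before the matrix forcing is ever applied. As it stands, the proof of ``$C^D_n(\kappa,J)$ holds for $n<n(*)$'' is missing, and with it the half of the theorem that answers the Juhasz--Kunen question.
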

The rest of this section is devoted to the proof of the above theorem. The forcing notion we define is of the form $\PP_\chi * \lusim{\QQ}_{\kappa, \mathcal{A}},$ where $\PP_\chi$ is a suitable iteration of length $\chi,$ which adds a set $\mathcal{A} \subseteq [\kappa]^{< \aleph_0},$ which has nice enough properties. Then we use this added set $\mathcal{A}$ to define the forcing notion $\QQ_{\kappa, \mathcal{A}}.$

In subsection 5.1 we define the notion of having the $\Delta$-system $\theta$-property for a filter $D$, and show that under suitable conditions, some filters have this property. In subsection 5.2 we define the forcing notion $\PP_\chi$ and prove its basic properties. Subsection 5.3 is devoted to the definition of the forcing notion $\QQ_{\kappa, \mathcal{A}}.$ Finally in subsection 5.4 we complete the proof of the above theorem.
\subsection{Filters with the $\Delta$-system $\theta$-property}
In this subsection we prove a generalized version of $\Delta$-system lemma that will be used several times later.
\begin{definition}
Let $D$ be a filter on $\kappa,$ and $\theta < \kappa$ be a cardinal. $D$ has the $\Delta$-system $\theta$-property, if for any $Y \subseteq \kappa, Y\neq \emptyset$ mod $D$, and any sequence $\langle B_\alpha: \alpha \in Y  \rangle$ of sets of cardinality $<\theta,$ there exists $Z \subseteq Y, Z\neq \emptyset$ mod $D$ such that $\langle B_\alpha: \alpha \in Z  \rangle$ forms a $\Delta$-system, i.e., there is $B^*$ such that for all $\alpha \neq \beta,$ both in $Z$, $B_\alpha \cap B_\beta =B^*.$
\end{definition}
The following is essentially due to Erdos and Rado; we will present a proof for completeness.
\begin{lemma}
Suppose $\kappa$ is regular uncountable and $\forall \alpha<\kappa (|\alpha|^{<\theta}<\kappa).$

$(a)$ If $D$ is a normal filter on $\kappa$ and $\{\delta<\kappa: cf(\delta) \geq \theta \}\in D,$ then $D$ has the the $\Delta$-system $\theta$-property.

$(b)$ If $D$ is the filter of co-bounded subsets of $\kappa,$ then $D$ has the $\Delta$-system $\theta$-property.

$(c)$ If $D=\{S \subseteq \kappa: S\cup \{ cf(\delta) < \theta  \}$ contains a club$   \},$ then $D$ has the $\Delta$-system $\theta$-property.
\end{lemma}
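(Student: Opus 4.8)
The plan is to prove all three parts from a single pressing-down argument carried out along a chain of elementary submodels; the only thing that changes between the parts is the filter with respect to which one presses down.

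\emph{Setup.} I would fix a large regular $\chi' > \kappa$ and build a continuous increasing chain $\langle M_\delta : \delta < \kappa \rangle$ of elementary submodels of $(H(\chi'), \in)$ with $|M_\delta| < \kappa$, $\theta \subseteq M_0$, and $\langle B_\alpha : \alpha \in Y\rangle, Y, D \in M_0$. Let $C = \{\delta < \kappa : M_\delta \cap \kappa = \delta\}$; this is a club, for every $\delta \in C$ one has $\theta \subseteq M_0 \subseteq M_\delta$, so $\theta \le \delta$, and if moreover $\cf(\delta) \ge \theta$ then $\delta$ is a limit and $M_\delta = \bigcup_{\gamma<\delta} M_\gamma$ by continuity. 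For part $(a)$ the relevant index set is $Y^* = Y \cap C \cap \{\delta : \cf(\delta) \ge \theta\}$, which lies in $D^+$: indeed $Y \in D^+$, while $C \in D$ since $D$ is normal (hence contains the club filter) and $\{\delta : \cf(\delta)\ge\theta\} \in D$ by hypothesis, and the intersection of a $D^+$-set with sets in $D$ is again in $D^+$.

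\emph{Core construction.} Let $\delta \in Y^*$. Since $|B_\delta| < \theta \le \cf(\delta)$ and $M_\delta = \bigcup_{\gamma < \delta} M_\gamma$, every element of $B_\delta \cap M_\delta$ already lies in some $M_{\gamma}$ with $\gamma<\delta$, and as there are fewer than $\cf(\delta)$ of them they are all captured below a single $\gamma^*(\delta) < \delta$; hence $B_\delta \cap M_\delta = B_\delta \cap M_{\gamma^*(\delta)}$. The map $\delta \mapsto \gamma^*(\delta)$ is regressive on $Y^*$, so by Fodor's lemma for the normal filter $D$ I would obtain $Z_1 \subseteq Y^*$ with $Z_1 \in D^+$ and a fixed $\gamma^* < \kappa$ with $\gamma^*(\delta) = \gamma^*$ for all $\delta \in Z_1$. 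Now $\delta \mapsto B_\delta \cap M_{\gamma^*}$ takes values among the subsets of $M_{\gamma^*}$ of size $<\theta$, of which there are at most $|M_{\gamma^*}|^{<\theta} < \kappa$ by the arithmetic hypothesis (as $|M_{\gamma^*}| < \kappa$); since $D$ is $\kappa$-complete (being normal) this map is constant, with some value $B^*$, on a set $Z_2 \subseteq Z_1$ with $Z_2 \in D^+$. I then claim $\langle B_\delta : \delta \in Z_2\rangle$ is a $\Delta$-system with root $B^*$: for $\delta < \delta'$ in $Z_2$ one has $B^* = B_\delta \cap M_\delta \subseteq B_\delta$ and likewise $B^* \subseteq B_{\delta'}$, so $B^* \subseteq B_\delta \cap B_{\delta'}$; conversely, as $\delta \in M_{\delta'}\cap\kappa$ we get $B_\delta \in M_{\delta'}$ by elementarity, and since $|B_\delta| < \theta \subseteq M_{\delta'}$ a surjection of $|B_\delta|$ onto $B_\delta$ inside $M_{\delta'}$ shows $B_\delta \subseteq M_{\delta'}$, whence any $x \in B_\delta \cap B_{\delta'}$ lies in $B_{\delta'} \cap M_{\delta'} = B^*$. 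This settles $(a)$.

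\emph{Main obstacle and the remaining parts.} The crux, and the step I expect to require the most care, is that the $B_\alpha$ are arbitrary sets, not subsets of $\kappa$, so there is no a priori correlation between an index $\alpha$ and the members of $B_\alpha$ on which to press down; the elementary chain supplies exactly this correlation, with $\cf(\delta)\ge\theta$ being precisely what forces a $(<\theta)$-sized trace on $M_\delta$ to sit boundedly below $\delta$. Part $(c)$ then follows immediately, since the filter defined there is a normal filter containing $\{\delta : \cf(\delta) \ge \theta\}$ (it is the club filter relativized to that set), so $(a)$ applies. For part $(b)$ the co-bounded filter is not normal, but here a positive set is simply an unbounded one: given unbounded $Y$, I would enumerate it as $\langle y_i : i < \kappa\rangle$ and apply the classical Erdos--Rado $\Delta$-system lemma to $\langle B_{y_i} : i < \kappa\rangle$ --- which is the argument above run with Fodor's lemma on the stationary set $\{i : \cf(i) \ge \theta\}$ in place of $D$ --- obtaining $I \in [\kappa]^\kappa$ with $\langle B_{y_i} : i \in I\rangle$ a $\Delta$-system; then $Z = \{y_i : i \in I\}$ has size $\kappa$, hence is unbounded and so $D$-positive.
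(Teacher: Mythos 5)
Your proof is correct. At bottom it is the same Erdos--Rado pressing-down argument as the paper's, but implemented differently: the paper first observes that $|\bigcup_{\alpha\in Y}B_\alpha|\le\kappa$, so one may assume every $B_\alpha$ is a subset of $\kappa$, and then presses down on the regressive function $\alpha\mapsto\sup(B_\alpha\cap\alpha)$, counts the fewer than $\kappa$ possible traces $B_\alpha\cap\xi$, and finally intersects with the club of points above which all earlier $B_\xi$ are bounded. You avoid the coding step by running a chain of elementary submodels and pressing down on the level $\gamma^*(\delta)$ at which $B_\delta\cap M_\delta$ stabilizes; the price is the extra observation that $B_\delta\subseteq M_{\delta'}$ for $\delta<\delta'$ (via a surjection from $|B_\delta|<\theta\subseteq M_{\delta'}$), which plays exactly the role of the paper's separating club. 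The two arguments are step-for-step parallel (two applications of Fodor/$\kappa$-completeness followed by a separation step), so the submodel machinery buys no extra generality, and the ``obstacle'' you flag --- that the $B_\alpha$ are arbitrary sets --- is answered by the paper in one line via the coding. Where your write-up is genuinely more informative is in parts $(b)$ and $(c)$: the paper dismisses these with ``follow from $(a)$'', whereas you correctly note that the co-bounded filter is not normal and supply the needed re-indexing of an unbounded $Y$ along $\kappa$, so that the stationary (hence size-$\kappa$) output of the club-filter case yields an unbounded, hence $D$-positive, subset of $Y$; and you observe that the filter in $(c)$ is itself normal with $\{\delta: cf(\delta)\ge\theta\}$ as a member, so $(a)$ applies verbatim.
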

\begin{proof}
$(a)$ Let  $Y \subseteq \kappa, Y\neq \emptyset$ mod $D$, and suppose that $\langle B_\alpha: \alpha \in Y  \rangle$ is a sequence of sets of cardinality $<\theta.$ As $|\bigcup_{\alpha\in Y}B_\alpha|\leq \kappa,$ we can assume that all $B_\alpha$'s, $\alpha\in Y,$ are subsets of $\kappa.$ Also as $\{\delta<\kappa: cf(\delta) \geq \theta \}\in D,$ we can assume that $Y \subseteq\{\delta<\kappa: cf(\delta) \geq \theta \}.$ Define the function $g$ on $Y$ by $g(\alpha)=\sup(B_\alpha\cap\alpha).$ Then for all $\alpha\in Y,$ $g(\alpha) < \alpha$ (as $|B_\alpha|<\theta$ and $cf(\alpha) \geq \theta$), so by normality of $D$, we can find $Y_1\subseteq Y, Y_1 \neq \emptyset$ mod $D$, and $\xi<\kappa$ such that for all $\alpha\in Y_1, g(\alpha)=\xi.$ Then
\begin{center}
$\alpha\in Y_1 \Rightarrow B_\alpha \cap \alpha = B_\alpha \cap \xi.$
\end{center}
As there are only $|\xi|^{<\theta} < \kappa$ many subset of  $\xi$ of cardinality $<\theta,$ and since $D$ is normal, there are $Y_2\subseteq Y_1, Y_2 \neq \emptyset$ mod $D$ and a set $B^*$ such that for all $\alpha\in Y_2, B_\alpha \cap \alpha=B_\alpha\cap \xi=B^*.$ Let
\begin{center}
$X=\{\alpha < \kappa: \forall \xi\in Y_2\cap \alpha$ $(\sup(B_\xi)<\alpha)\}.$
\end{center}
$X$ is a club of $\kappa,$ and hence $X\in D$ (as $D$ contains the club filter by its normality). Set $Z=X\cap Y_2.$ Then $Z \subseteq Y, Z \neq \emptyset$ mod $D$, and
$\langle B_\alpha: \alpha \in Z  \rangle$ forms a $\Delta$-system with root $B^*$.

$(b)$ and $(c)$ follow from $(a).$
\end{proof}
The following lemma will be used in the proof of Theorem 5.2.
\begin{lemma}
The $\Delta$-system $\theta$-property is preserved under $\theta$-closed $\theta^+$-c.c. forcing notions.
\end{lemma}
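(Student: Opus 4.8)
The plan is to reduce, by routine closure and chain-condition arguments, to an application of the ground-model $\Delta$-system $\theta$-property of $D$ to a single ground-model sequence. Let $\PP$ be $\theta$-closed and $\theta^+$-c.c., let $G$ be $\PP$-generic, and (following the footnote convention) write $D$ also for the filter generated in $V[G]$. Work in $V[G]$ and fix $Y\in D^+$ and $\langle B_\alpha:\alpha\in Y\rangle$ with each $|B_\alpha|<\theta$; applying a bijection of $\bigcup_{\alpha\in Y}B_\alpha$ onto an ordinal (which preserves all intersection patterns, hence the property of being a $\Delta$-system) we may assume $B_\alpha\subseteq\Ord$. First I record the preservation facts. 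Since $\PP$ is $\theta$-closed it adds no $<\theta$-sequences, so each $B_\alpha$, being a $<\theta$-sized set of ordinals, already lies in $V$; likewise $\theta^{<\theta}=\theta$, the hypothesis $\forall\alpha(|\alpha|^{<\theta}<\kappa)$, and ``$\cf(\delta)\ge\theta$'' are preserved. Using $\theta^+$-c.c. to cover sequences of length $<\kappa$ one checks that $D$ stays $\kappa$-complete in $V[G]$ (here I use, as in the setting of Theorem 5.2, that $D$ is $\kappa$-complete), and that for $X\in V$ the assertions ``$X\in D^+$'' and ``$X=\emptyset \bmod D$'' are absolute between $V$ and $V[G]$.

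Next I reflect the data into $V$. Fix names $\dot Y,\langle\dot B_\alpha\rangle$ and a condition $p\in G$ forcing the hypotheses. In $V$, for each $\alpha<\kappa$ choose a maximal antichain $A_\alpha$ below $p$ each of whose conditions decides ``$\check\alpha\in\dot Y$'' and, when it forces this, decides $\dot B_\alpha$ to a ground-model value; by $\theta^+$-c.c., $|A_\alpha|\le\theta$, giving a ground-model array $\langle B_{\alpha,j}:\alpha<\kappa,\ j<\sigma_\alpha\rangle$ with $\sigma_\alpha\le\theta$ enumerating the possible values of $\dot B_\alpha$ on $\dot Y$. Put $Y'=\{\alpha: \text{some condition of }A_\alpha\text{ forces }\check\alpha\in\dot Y\}$. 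As $p\Vdash\dot Y\subseteq\check{Y'}$ and $Y\in D^+$, absoluteness yields $Y'\in D^+$ in $V$. Back in $V[G]$ define $c\colon Y\to\theta$ by letting $c(\alpha)$ be the unique $j$ for which the relevant condition of $A_\alpha$ belongs to $G$, so that $B_\alpha=B_{\alpha,c(\alpha)}$ for $\alpha\in Y$. Since $\theta<\kappa$ and $D$ is $\kappa$-complete in $V[G]$, the partition $Y=\bigcup_{j<\theta}c^{-1}(j)$ has a positive block: there is $j^*<\theta$ with $W:=c^{-1}(j^*)\in D^+$. On $W$ the sequence coincides with the \emph{ground-model} column $g:=\langle B_{\alpha,j^*}:\alpha\in Y'_{j^*}\rangle$, where $Y'_{j^*}=\{\alpha\in Y':\sigma_\alpha>j^*\}\supseteq W$ and $Y'_{j^*}\in D^+$.

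It remains to thin the forcing-positive set $W$ to a positive $\Delta$-subsystem of the fixed ground-model column $g$. My plan is to build, in $V$, a maximal family $\langle Z_s:s<\eta\rangle$ of pairwise disjoint positive subsets of $Y'_{j^*}$ with each $\langle g(\alpha):\alpha\in Z_s\rangle$ a $\Delta$-system: maximality together with the $\Delta$-system $\theta$-property forces $Y'_{j^*}\setminus\bigcup_s Z_s$ to be non-positive, i.e. $\bigcup_s Z_s=Y'_{j^*}\bmod D$. Passing to $V[G]$ (absolute for ground-model sets), $W=\bigcup_s(W\cap Z_s)$ modulo a null set; if every $W\cap Z_s$ were null, intersecting witnessing $D$-sets and using $\kappa$-completeness of $D$ in $V[G]$ would make $W$ null, contradicting $W\in D^+$. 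Hence some $Z:=W\cap Z_s\in D^+$, and $\langle B_\alpha:\alpha\in Z\rangle$, a subfamily of the $\Delta$-system indexed by $Z_s$, is again a $\Delta$-system, as required. The \textbf{main obstacle} is exactly the $\kappa$-completeness step: it needs $\eta<\kappa$, whereas a maximal family of disjoint positive $\Delta$-systems may a priori have length $\kappa$, so bounding $\eta$ is where the fine structure of $D$ must enter. For the filters actually relevant here (the club, co-bounded, and $\cf<\theta$ filters of Lemma 5.6) this obstacle is best avoided altogether: one simply re-runs the proof of Lemma 5.6 in $V[G]$, since a $\theta$-closed $\theta^+$-c.c. forcing preserves all of its hypotheses — the cardinal arithmetic, ``$\{\delta:\cf(\delta)\ge\theta\}\in D$'', normality or co-boundedness of $D$, and (via $\theta^+$-c.c.) the agreement of the club filter on $\kappa$ with its ground-model version — thereby re-deriving the $\Delta$-system $\theta$-property directly.
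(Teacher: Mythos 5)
Your opening reductions are sound (and in places more careful than the paper's), but the argument does not close, and the gap is exactly the one you flag. Compare with the paper's route: from $\tilde Y\neq\emptyset$ mod $\tilde D$ it extracts a \emph{ground-model} set $Y\subseteq\tilde Y$ with $Y\neq\emptyset$ mod $D$ and $\langle B_\alpha:\alpha\in Y\rangle\in V$ (each $B_\alpha$ lies in $V$ by $\theta$-closedness), applies the ground-model $\Delta$-system $\theta$-property to this single ground-model object, and observes that the resulting $Z$ stays positive for the generated filter. Your antichain/colouring step produces something strictly weaker: only a $V[G]$-positive set $W$ sitting inside a ground-model positive set $Y'_{j^*}$ carrying a ground-model column $g$, with $W$ itself not in $V$. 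The device you then use to bridge this --- a maximal pairwise disjoint family $\langle Z_s:s<\eta\rangle$ of $D$-positive $\Delta$-subsystems of $g$, followed by $\kappa$-completeness to find $s$ with $W\cap Z_s\in D^+$ --- fails for the reason you yourself identify: nothing bounds $\eta$ below $\kappa$ (already for the co-bounded filter, $\kappa$ splits into $\kappa$ pairwise disjoint positive sets), so $\kappa$-completeness of the dual ideal gives nothing about the decomposition $W=\bigcup_{s<\eta}(W\cap Z_s)$.

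The fallback does not repair this in the required generality. Re-running Lemma 5.6 in $V[G]$ (using that $\theta$-closedness and the $\theta^+$-c.c.\ preserve its hypotheses) is correct and handles the club, co-bounded and $\cf<\theta$ filters; but Lemma 5.5 is invoked in the proof of Theorem 5.2 for the filter $D$ of hypothesis $(2)$ there, which is only assumed to be $\kappa$-complete, to contain the co-bounded sets, and to have the $\Delta$-system $\theta$-property --- it need not be one of the filters of Lemma 5.6. You also quietly import $\kappa$-completeness of $D$, which the lemma as stated does not assume. The missing idea, relative to the paper's proof, is the production of a $D$-positive $Y\in V$ with $Y\subseteq\tilde Y$ and $\langle B_\alpha:\alpha\in Y\rangle\in V$; that extraction (not the colouring, and not the re-derivation for special filters) is the crux where the forcing hypotheses have to be brought to bear, and your proposal never supplies it.
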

\begin{proof}
Suppose $\MPB$ is a $\theta$-closed $\theta^+$-c.c. forcing notion, $G$ is $\MPB$-generic over $V$ and let $D \in V$ be a filter on $\kappa > \theta$
 which has the  $\Delta$-system $\theta$-property. Let $\tilde D$ be the filter generated by $D$ in $V[G].$ We are going to show that $\tilde D$ has the $\Delta$-system $\theta$-property.

Thus suppose that  $\tilde Y \in V[G], \tilde Y \subseteq \kappa, \tilde Y\neq \emptyset$ mod $\tilde D$, and let $\langle B_\alpha: \alpha \in \tilde Y  \rangle \in V[G]$ be a sequence of sets of cardinality $<\theta$. By the properties of the forcing notion, each $B_\alpha \in V$ and  we can find $Y \in V$ with $Y \subseteq \tilde Y$
and $Y \neq \emptyset$ mod $D$ such that $\langle B_\alpha: \alpha \in Y  \rangle \in V$.

As $D$ has the  $\Delta$-system $\theta$-property,  we can find $Z \subseteq Y, Z\neq \emptyset$ mod $D$ such that $\langle B_\alpha: \alpha \in Z  \rangle$ forms a $\Delta$-system. Then $Z\neq \emptyset$ mod $\tilde D$
and so in $V[G], \tilde D$ has the  $\Delta$-system $\theta$-property, as requested.
\end{proof}

\subsection{On the forcing notion $\PP_\chi$}
Fix $n(*), \theta, \kappa, \chi$ and $D$  as in Theorem 5.2. We describe a cofinality preserving forcing notion $\PP_\chi$ which adds a set $\mathcal{A} \subseteq [\lambda]^{<\aleph_0}$ which has some nice properties.
\begin{definition}
 $\PP_\chi=\langle \langle \PP_i: i \leq \chi \rangle, \langle \lusim{\QQ}_i: i<\chi  \rangle \rangle$ is defined as a $(<\theta)$-support iteration of forcing notions such that:
\begin{enumerate}
\item $(\QQ_0, \leq )$ is defined by:
\begin{enumerate}
\item [(1-1)]$p\in \QQ_0$ iff $p=(w^p, \mathcal{A}^p),$ where $w^p \in [\kappa]^{<\theta}$ and $\mathcal{A} \subseteq [w]^{n(*)}.$
\item [(1-2)] $p \leq q \Leftrightarrow w^q \subseteq w^p$ and $\mathcal{A}^q = \mathcal{A}^p \cap [w^q]^{n(*)}.$
\end{enumerate}
Also let $\lusim{\mathcal{A}}=\bigcup \{\mathcal{A}^p: p\in \dot{G}_{\QQ_0}\}.$
\item Assume $0<i<\chi,$ and $\PP_i$ is defined. Then for some $\PP_i$-names $\lusim{Y}_i$ and $\langle \lusim{w}_i^*, \langle \lusim{w}^i_\alpha: \alpha\in \lusim{Y}_i \rangle \rangle$ we have:
\begin{enumerate}
\item [(2-1)] $\Vdash_{\PP_i}$``$\lusim{Y}_i$ is a subset of $\kappa, \lusim{Y}_i \neq \emptyset$ mod $D$'',
\item [(2-2)]  $\Vdash_{\PP_i}$``$\langle  \lusim{w}^i_\alpha: \alpha\in \lusim{Y}_i \rangle$ is a $\Delta$-system of subsets of $\kappa$ with root $\lusim{w}_i^*$, each of cardinality $\leq \aleph_0$'',
\item [(2-3)] $\Vdash_{\PP_i}$``$\lusim{\QQ}_i=\{u \subseteq \lusim{Y}_i: |u|<\theta$ and if $m<n(*), \alpha_0, \dots, \alpha_{m-1}\in u$ are distinct, then for all $y\in \lusim{\mathcal{A}}, y \subseteq \bigcup_{l<m}\lusim{w}^i_{\alpha_l} \Rightarrow \exists l<m, y\subseteq \lusim{w}^i_{\alpha_l}   \}$''.
\item [(2-4)] $\Vdash_{\PP_i}$``$\leq_{\lusim{\QQ}_i} = \supseteq,$
\end{enumerate}

\item If $\lusim{Y}$ and $\langle \lusim{w}^*, \langle \lusim{w}_\alpha: \alpha\in \lusim{Y} \rangle \rangle$  are $\PP_i$-names of objects as above, then for some $j\in (i,\chi),$ they are of the form $\lusim{Y}_j$ and $\langle \lusim{w}_j^*, \langle \lusim{w}^j_\alpha: \alpha\in \lusim{Y}_j \rangle \rangle$.
\end{enumerate}
\end{definition}
\begin{remark}
$(a)$ $(3)$ can be achieved by a bookkeeping argument, and using the fact that the forcing $\PP_\chi$ satisfies the $\theta^+$-c.c. (see below).

$(b)$ It also follows from the $\theta^+$-chain condition of the forcing that under the same assumptions as $(3), \Vdash_{\PP_j}$``$\lusim{Y} \neq \emptyset$ mod $D$'', so $\QQ_j$ is well-defined.
\end{remark}
\begin{lemma}
Let $\alpha \leq \chi.$

$(a)$ $\PP_\alpha^*$ is a dense subset of $\PP_\alpha,$ where $\PP_\alpha^*$ consists of those $p\in \PP_\alpha$ such that:

$\hspace{0.5cm}$$(1)$ $i\in \dom(p) \Rightarrow p(i)$ is an object (and not just a $\PP_i$-name),

$\hspace{0.5cm}$$(2)$ $0\in \dom(p)$ and for some $w$ we have $w^{p(0)}=w,$

$\hspace{0.5cm}$$(3)$ If $i\in \dom(p),$ then for all $j\in p(i), p \upharpoonright i$ decides $\lusim{w}^*_i, \lusim{w}^i_j.$

$(b)$ If $\alpha < \chi$, then $\Vdash_{\MPB_\alpha}$``$\lusim{\MQB}_\alpha$ is $\theta^+$-Knaster''.

$(c)$ Each $\PP_\alpha$ satisfies the $\theta^+$-c.c.
\end{lemma}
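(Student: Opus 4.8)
The plan is to prove the three clauses in order, using clause $(a)$ to supply the ``concrete'' conditions that make $(b)$ and $(c)$ manageable. For $(a)$, the essential preliminary is that the whole iteration is $\theta$-closed. I would first check each iterand is $\theta$-closed: for $\QQ_0$ one takes, along a descending sequence of length $<\theta$, the union of the $w^p$ and of the $\mathcal{A}^p$ (legitimate since $\theta=\theta^{<\theta}$ is regular and each $w^p$ has size $<\theta$); for $\lusim{\QQ}_i$ with $i>0$ one takes the union of a descending (i.e.\ $\subseteq$-increasing) chain of length $<\theta$, which again has size $<\theta$ and still lies in $\lusim{\QQ}_i$ because the defining requirement only quantifies over finite tuples $\alpha_0,\dots,\alpha_{m-1}$ with $m<n(*)$, and any such finite tuple already appears in a single member of the chain. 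Granting $\theta$-closure of $\PP_\alpha$, I would prove density of $\PP_\alpha^*$ by a recursion on the coordinates of $s=\dom(p)$ (a set of size $<\theta$): at coordinate $0$ strengthen to decide $p(0)$ to a genuine object $(w,\mathcal{A}^{p(0)})$, and at each $i>0$ strengthen $p\restriction i$ to decide $p(i)$ to be an actual set $u$ and to decide the finitely-indexed-by-$u$ data $\lusim{w}^*_i$ and $\lusim{w}^i_j$, $j\in u$. Each such step decides a $<\theta$-sized set of ordinals and $<\theta$ many countable sets, i.e.\ $<\theta$ pieces of information, which a single lower bound absorbs by $\theta$-closure; organizing all coordinates into one descending sequence of length $<\theta$ and taking a final lower bound produces $q\le p$ in $\PP_\alpha^*$. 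The interdependence of coordinates is handled exactly as in the standard fusion argument for $<\theta$-support iterations, or by a clean induction on $\alpha$.

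For $(b)$ I would pass to $V[G_\alpha]$, where $\lusim{Y}_\alpha,\lusim{w}^*_\alpha,\langle \lusim{w}^\alpha_\beta\rangle$ and $\mathcal{A}$ are interpreted, and use that $u,u'\in\QQ_\alpha$ are compatible exactly when $u\cup u'\in\QQ_\alpha$ (since $\QQ_\alpha$ is ordered by $\supseteq$ and is closed under subsets). Given $\theta^+$ conditions $\langle u_\eta:\eta<\theta^+\rangle$, each of size $<\theta$, the classical $\Delta$-system lemma (available since the hypothesis gives $\theta^{<\theta}=\theta$) thins them to a $\theta^+$-sized subfamily forming a $\Delta$-system with root $u^*$. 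Using that $\langle \lusim{w}^\alpha_\beta:\beta\in \lusim{Y}_\alpha\rangle$ is globally a $\Delta$-system with root $w^*$ (clause (2-2)), so that $w^\alpha_\beta\cap w^\alpha_\gamma=w^*$ for distinct $\beta,\gamma$, I would show that any violation of the defining condition for a union $u_\eta\cup u_{\eta'}$ must be a genuinely ``mixed'' configuration: a set $y\in\mathcal{A}$ covered by the $w$'s of a tuple drawn from both $u_\eta\setminus u^*$ and $u_{\eta'}\setminus u^*$, with $y$ meeting the pairwise disjoint private parts $w^\alpha_\beta\setminus w^*$ on both sides. Indeed, any violation not of this form collapses, via the disjointness of the private parts, to a violation already present inside $u_\eta$ or inside $u_{\eta'}$, contradicting $u_\eta,u_{\eta'}\in\QQ_\alpha$. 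The whole task therefore reduces to a further thinning to a $\theta^+$-sized subfamily in which no member of $\mathcal{A}$ meets the private parts of two distinct conditions; I would obtain this by passing to an indiscernible subfamily (there being only $2^{<\theta}=\theta$ relevant isomorphism types of the size-$<\theta$ structures $(u_\eta,\langle w^\alpha_\beta\rangle_{\beta\in u_\eta},\mathcal{A}\!\restriction\! W_\eta,w^*)$, where $W_\eta=\bigcup_{\beta\in u_\eta}w^\alpha_\beta$) and exploiting $n(*)<\omega$ to bound how a single finite $y$ can link distinct private parts.

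Clause $(c)$ then follows from $(a)$ and $(b)$ by the standard preservation of the $\theta^+$-chain condition under $<\theta$-support iterations of $\theta^+$-Knaster iterands when $\theta^{<\theta}=\theta$. Concretely, given $\theta^+$ conditions I would first replace them by conditions in the dense set $\PP_\alpha^*$ of $(a)$, apply the $\Delta$-system lemma to their supports to obtain a common root $r$ with $|r|<\theta$, and then use the $\theta^+$-Knaster property of the iterands (from $(b)$) at the $<\theta$ coordinates of $r$ to extract, inside a $\theta^+$-sized subfamily, conditions that are pairwise compatible on $r$ and have otherwise essentially disjoint supports; such conditions are compatible in $\PP_\alpha$. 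This is cleanest to run as an induction on $\alpha\le\chi$, with Knaster-ness at the single new coordinate supplied by $(b)$ and the preservation lemma of Subsection~5.1 guaranteeing that the relevant filter keeps its $\Delta$-system $\theta$-property in the intermediate extensions.

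I expect the main obstacle to be the compatibility step in $(b)$: after the $\Delta$-system reduction, showing that one can thin to $\theta^+$ conditions whose pairwise unions still satisfy the covering condition. The real danger is that the generic $\mathcal{A}$ links the private parts of different conditions, and the force of the argument is precisely that the global $\Delta$-system structure of the $w$'s, together with the finiteness of $n(*)$ and the indiscernibility thinning, rules these cross-links out on a $\theta^+$-sized subfamily. Getting this thinning exactly right, rather than the comparatively routine closure and iteration bookkeeping of $(a)$ and $(c)$, is where the substantive work lies.
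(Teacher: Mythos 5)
Your treatment of $(a)$ is essentially the paper's (establish $\theta$-closure of the iterands, then induct on $\alpha$, deciding $p(i)$, $\lusim{w}^*_i$ and the $\lusim{w}^i_j$ coordinate by coordinate), and your $(c)$ matches the paper's explicit amalgamation argument: pass to $\PP^*_\alpha$, take a $\Delta$-system of supports, and build a common extension by coordinatewise unions. Those parts are fine.

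The gap is in $(b)$, and it is exactly at the step you flag as the substantive one. After the $\Delta$-system reduction you correctly isolate the obstruction --- a $y\in\mathcal{A}$ meeting the private parts $w^\alpha_\beta\setminus w^*$ of indices from two different conditions --- but the tools you propose to kill it (thinning by isomorphism type of the individual structures $(u_\eta,\langle w^\alpha_\beta\rangle_{\beta\in u_\eta},\mathcal{A}\restriction W_\eta,w^*)$, plus the bound $n(*)<\omega$ on how many private parts one $y$ can meet) cannot succeed, because they use no property of $\mathcal{A}$ beyond its elements being finite, and for an \emph{arbitrary} such $\mathcal{A}$ the conclusion is false. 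Concretely: take $n(*)=3$, $w^*=\emptyset$, the $w^\alpha_\beta$ pairwise disjoint two-element sets, and $\mathcal{A}$ containing, for every pair $\beta\neq\gamma$ in $Y_\alpha$, some $3$-element subset of $w^\alpha_\beta\cup w^\alpha_\gamma$ meeting both. Every singleton $\{\beta\}$ is a condition, all singletons have the same isomorphism type (each $\mathcal{A}\restriction W_\eta$ is empty, since every $y$ straddles two $w$'s), yet any two singletons are incompatible, so $\QQ_\alpha$ is not even $\theta^+$-c.c. The linking sets live in $[W_\eta\cup W_{\eta'}]^{n(*)}$ and are simply not controlled by the isomorphism types of the individual conditions; and each $y$ contributing only boundedly many ``edges'' does not prevent $\mathcal{A}$ from supplying a different $y$ for every pair.

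What is missing is the genericity of $\mathcal{A}$: $(b)$ is not an internal statement about an arbitrary interpretation of $\lusim{\mathcal{A}}$ in $V[G_\alpha]$, and it has to be proved by a density argument over $\PP_\alpha$ in which the first coordinate is used to control $\mathcal{A}$. The shape of the correct argument is visible in the paper's proof of $(c)$: choose $p_\eta\in\PP^*_\alpha$ deciding $u_\eta$ and the relevant $w^\alpha_\beta$, arrange $W_\eta=\bigcup_{\beta\in u_\eta}w^\alpha_\beta\subseteq w^{p_\eta(0)}$ so that $\mathcal{A}\cap[W_\eta]^{n(*)}$ is already decided, run the $\Delta$-system over the $p_\eta$ (including the sets $w^{p_\eta(0)}$), and amalgamate two of them with
$q(0)=\langle w^{p_\eta(0)}\cup w^{p_{\eta'}(0)},\ \mathcal{A}^{p_\eta(0)}\cup\mathcal{A}^{p_{\eta'}(0)}\rangle$,
so that no $y\subseteq W_\eta\cup W_{\eta'}$ enters $\mathcal{A}$ beyond those already decided by one side --- and any such one-sided $y$ cannot meet the other side's private part. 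It is this freedom to \emph{withhold} cross-linking sets from $\mathcal{A}^{q(0)}$, not a combinatorial thinning of the $u_\eta$ themselves, that makes the Knaster property (and hence the chain condition) go through.
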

\begin{proof}
$(a)$ follows easily by induction on $\alpha$, and using the fact that $\Vdash_{\PP_i}$``$\lusim{\QQ_i}$ is $\theta$-closed''. Let's present a proof for completeness.
\begin{enumerate}
\item [Case 1.] {\bf $\alpha=0$:} There is noting to prove.

\item [Case 2.] {\bf $\alpha+1$ is a successor ordinal:} Thus assume that $\PP_\alpha^*$ is a dense subset of $\PP_\alpha,$ and let $p\in \PP_{\alpha+1}.$ Then $p \upharpoonright \alpha\in \PP_\alpha,$ so for some $p_1\in \PP_\alpha^*, p_1 \leq_{\PP_\alpha} p\upharpoonright \alpha.$ Since $\Vdash_{\PP_\alpha}$``$\lusim{\QQ_\alpha}$ is $\theta$-closed and $|p(\alpha)|<\theta$'', we can find $q_1$ such that $p_1\Vdash$``$p(\alpha)=q_1$''. As $|q_1|<\theta,$ and again using  $\Vdash_{\PP_\alpha}$``$\lusim{\QQ_\alpha}$ is $\theta$-closed'', we can find $p_2 \leq_{\PP_\alpha} p_1, p_2\in \PP_{\alpha}^*, q_2 \leq_{{\QQ_\alpha}} q_1$ and $w^*_\alpha, w^\alpha_j$ for  $j\in q_2$ such that for all $j\in q_2, p_2\Vdash$``$\lusim{w}^*_\alpha=w^*_\alpha$ and $\lusim{w}^\alpha_j=w^\alpha_j$''. Then $(p_2, q_2)\in \PP^*_{\alpha+1}$ and $(p_2, q_2)\leq_{\PP_{\alpha+1}} p.$

\item [Case 3.] {\bf $\alpha$ is a limit ordinal, $cf(\alpha)\geq \theta$:} Let $p\in \PP_\alpha.$ Then as $|\dom(p)|<\theta,$ we can find $\beta<\alpha$ such that $\dom(p) \subseteq \beta,$ so $p\in \PP_\beta,$ and the induction applies.

\item [Case 4.] {\bf $\alpha$ is a limit ordinal, $cf(\alpha)< \theta$:} Let $\langle \alpha_\xi: \xi<cf(\alpha) \rangle$ be a normal sequence cofinal in $\alpha.$ Let $p\in \PP_\alpha.$ By induction and the $\theta$-closure of forcings, we can find a decreasing sequence $\langle  q_\xi: \xi< cf(\alpha)  \rangle$ of conditions such that:
$q_\xi\in \PP^*_{\alpha_\xi}$ and $q_\xi \leq_{\PP_{\alpha_\xi}} p \upharpoonright \alpha_\xi.$ Let $p_1=\bigcup_{\xi<cf(\alpha)}q_\xi.$ Then $p_1\in \PP^*_\alpha$ and $p_1 \leq_{\PP_\alpha} p.$
\end{enumerate}

$(b)$ can be proved easily by a $\Delta$-system argument.
To prove $(c),$ it suffices, by $(a)$, to show that $\PP_\alpha^*$ satisfies the $\theta^+$-c.c. Let $\{p_\beta: \beta<\theta^+ \} \subseteq \PP_\alpha^*.$ We can assume that:

$(1)$ $\langle  \dom(p_\beta): \beta<\theta^+ \rangle$ forms a $\Delta$-system with root $\Delta,$

$(2)$ For each $i\in \Delta$, $\langle  p_\beta(i): \beta<\theta^+ \rangle$ are pairwise compatible in $\QQ_i$ (using the fact that

$\hspace{0.5 cm}$ $\Vdash_{\PP_i}$``$\lusim{\QQ_i}$ is $\theta^+$-Knaster'').

Now let $\beta_1 < \beta_2 < \theta^+.$ Let $q$ be defined as follows:
\begin{itemize}
\item $\dom(q)=\dom(p_{\beta_1}) \cup \dom(p_{\beta_2}),$
\item $q(0)= \langle w^{p_{\beta_1}} \cup w^{p_{\beta_2}}, \mathcal{A}^{p_{\beta_1}} \cup \mathcal{A}^{p_{\beta_2}}  \rangle,$
\item For all $i \in \dom(q), q(i)=p_{\beta_1}(i) \cup p_{\beta_2}(i)$ (where we assume $p_{\beta_k}(i)=\emptyset$, if $i \notin \dom(p_{\beta_k})$).
\end{itemize}
Clearly $q\in \PP^*_\alpha$, and it extends both $p_{\beta_1}, p_{\beta_2}.$ So  $\{p_\beta: \beta<\theta^+ \}$ is not an antichain.
\end{proof}

\subsection{On the forcing notion $\QQ_{\lambda, \mathcal{A}}$}

In this subsection we describe a forcing notion $\QQ_{\lambda, \mathcal{A}}$, which depends on a parameter $\mathcal{A} \subseteq [\lambda]^{<\aleph_0}.$  For $\mathcal{A} \subseteq [\lambda]^{<\aleph_0},$ set
\begin{center}
$\mathcal{A}^+=\{u\in [\lambda]^{<\aleph_0}: u$ includes some member of $\mathcal{A}    \}.$
\end{center}
\begin{definition}
Assume $\lambda$ is a cardinal and $\mathcal{A} \subseteq [\lambda]^{<\aleph_0}.$ We define the forcing notion $(\QQ_{\lambda, \mathcal{A}}, \leq )$ as follows:

$(a)$ $p\in \QQ_{\lambda, \mathcal{A}}$ iff $p$ is a finite partial function from $\lambda$ to $2^{n(p)},$ for some $n(p)<\omega.$

$(b)$ For $p, q \in \QQ_{\lambda, \mathcal{A}}, p \leq q$ ($p$ is stronger than $q$) iff:

$\hspace{0.5cm}$ $(b-1)$ $\dom(q) \subseteq \dom(p),$

$\hspace{0.5cm}$ $(b-2)$ $\alpha\in \dom(q) \Rightarrow q(\alpha) \unlhd p(\alpha),$

$\hspace{0.5cm}$ $(b-3)$ If $u\in \mathcal{A}$, $u \subseteq \dom(q)$ and $n(q) \leq k < n(p),$ then for some $\alpha\in u, p(\alpha)(k)=0.$
\end{definition}
We also define the following $\QQ_{\lambda, \mathcal{A}}$-names:
\begin{itemize}
\item [$(\alpha)$] $\lusim{\eta}_\alpha =\bigcup \{p(\alpha): \alpha\in \dom(p)$ and $p\in \dot{G}_{\QQ_{\lambda, \mathcal{A}}} \},$
\item [$(\beta)$] $\lusim{a}_\alpha =\{k<\omega: \lusim{\eta}_\alpha (k)=1 \},$
\item [$(\gamma)$] $\lusim{a}_{\alpha, n}=\lusim{a}_{\omega\cdot\alpha+n}.$
\end{itemize}
\begin{remark}
Given any $w \subseteq \lambda,$ let $\mathcal{A}\upharpoonright w =\{u\in \mathcal{A}: u \subseteq w \}.$ Then we  define $\QQ_{\lambda, \mathcal{A}} \upharpoonright w$  to be $\QQ_{\lambda, \mathcal{A} \upharpoonright w}$ which is defined in the natural way. Then for disjoint $w, v \subseteq \lambda$ if
\begin{center}
 $\forall u \in \mathcal{A} (u \subseteq w \cup v \Rightarrow u \subseteq w$ or $u \subseteq v),$
\end{center}
then we have a forcing isomorphism $\QQ_{\lambda, \mathcal{A}}\upharpoonright (w\cup v) \approx  \{(p,q) \in (\QQ_{\lambda, \mathcal{A}}\upharpoonright w) \times (\QQ_{\lambda, \mathcal{A}}\upharpoonright v): n(p)=n(q)\}.$
But  in general the above forcing isomorphism may not be true, if $w,v$ do not satisfy the above requirement, as $(b-3)$ may fail.
\end{remark}
We have the following easy lemma.
\begin{lemma}
Let $\QQ=\QQ_{\lambda, \mathcal{A}}.$ Then

$(a)$ $\QQ$ is a $c.c.c.$ forcing notion,

$(b)$ $\Vdash_{\QQ}$``$\lusim{\eta}_\alpha \in 2^\omega$ and $\lusim{a}_{\alpha, n} \subseteq \omega$'',

$(c)$ $\Vdash_{\QQ}$`` $\bigcap_{i<n} \lusim{a}_{\alpha_i, m}$ is finite'' iff $\{\omega\cdot\alpha_i+m: i<n   \}\in \mathcal{A}^+.$
\end{lemma}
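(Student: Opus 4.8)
The plan is to handle the three parts in increasing order of difficulty, part $(c)$ carrying the real content. Throughout I write $\beta_i = \omega\cdot\alpha_i + m$ and $u = \{\beta_i : i<n\}$, so that $\lusim{a}_{\alpha_i, m} = \lusim{a}_{\beta_i}$ and the combinatorial side of $(c)$ reads $u \in \mathcal{A}^+$. For $(a)$ I would run the standard $\Delta$-system plus counting argument: given $\{p_\xi : \xi < \omega_1\} \subseteq \QQ$, thin out so that the finite domains $\dom(p_\xi)$ form a $\Delta$-system with root $r$, then thin further so that $n(p_\xi)=n$ is constant (countably many choices) and $p_\xi \upharpoonright r$ is constant (only $(2^n)^{|r|}<\aleph_0$ choices). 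For two surviving conditions the union $p_\xi \cup p_\zeta$ is a well-defined finite partial function into $2^n$ satisfying clauses (b-1),(b-2); clause (b-3) is vacuous because $n(p_\xi\cup p_\zeta)=n$ leaves no index $k$ with $n\le k<n$. So the survivors are pairwise compatible, giving $c.c.c.$ (indeed precaliber $\aleph_1$). For $(b)$ it suffices that for each $\beta<\lambda$ and $N<\omega$ the set of $p$ with $\beta\in\dom(p)$ and $n(p)\ge N$ is dense: adding $\beta$ with value $0^{n(q)}$ keeps the length fixed so (b-3) is vacuous, and lengthening every coordinate by appending $0$'s satisfies (b-3) automatically since every new bit is $0$. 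By genericity each $\lusim{\eta}_\beta$ is then total in $2^\omega$, whence $\lusim{a}_{\alpha,n}=\{k:\lusim{\eta}_\beta(k)=1\}\subseteq\omega$.

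For the direction $u\in\mathcal{A}^+\Rightarrow$ finiteness, fix $v\in\mathcal{A}$ with $v\subseteq u$; since $\bigcap_{i<n}\lusim{a}_{\beta_i}\subseteq\bigcap_{\beta\in v}\lusim{a}_\beta$ it is enough to bound the latter. Working in $V[G]$, pick $q\in G$ with $v\subseteq\dom(q)$ (using the density from $(b)$). I claim no $k\ge n(q)$ lies in $\bigcap_{\beta\in v}\lusim{a}_\beta$: given such $k$, choose $p\in G$ with $p\le q$ and $n(p)>k$; applying clause (b-3) to the set $v\in\mathcal{A}$, $v\subseteq\dom(q)$, yields some $\beta\in v$ with $p(\beta)(k)=0$, i.e. $k\notin\lusim{a}_\beta$. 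Hence $\bigcap_{\beta\in v}\lusim{a}_\beta\subseteq n(q)$ is finite.

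For the converse, assume $u\notin\mathcal{A}^+$, so no member of $\mathcal{A}$ is contained in $u$, and establish infinitude by density. Given $q$ and $N$, first extend $q$ as in $(b)$ to $q'$ with $u\subseteq\dom(q')$ and $n(q')\ge N$. Now form $p$ by appending a single new bit at coordinate $k=n(q')$, setting $p(\beta)(k)=1$ for $\beta\in u$ and $p(\gamma)(k)=0$ for $\gamma\in\dom(q')\setminus u$. The only nontrivial point is clause (b-3) at $k$: for each $w\in\mathcal{A}$ with $w\subseteq\dom(q')$ we have $w\not\subseteq u$ (else $u\in\mathcal{A}^+$), so some $\gamma\in w\setminus u$ has $p(\gamma)(k)=0$. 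Thus $p\le q'\le q$ and $p\Vdash k\in\bigcap_{i<n}\lusim{a}_{\beta_i}$ with $k\ge N$, giving density and hence infinitude.

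I expect the main obstacle to be exactly this last density construction: one must simultaneously force every $\beta\in u$ to $1$ on the new coordinate while still meeting clause (b-3) for every $\mathcal{A}$-set sitting inside the current domain, and it is precisely the hypothesis $u\notin\mathcal{A}^+$ that supplies, for each such $\mathcal{A}$-set $w$, a witness $\gamma\in w\setminus u$ free to be set to $0$. Everything else, including the $c.c.c.$ and the genericity bookkeeping, is routine.
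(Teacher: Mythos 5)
Your proposal is correct and follows essentially the same route as the paper: the forward direction of $(c)$ reduces to a single $v\in\mathcal{A}$, $v\subseteq u$ and reads the bound $n(q)$ off clause $(b\text{-}3)$, and the converse is exactly the paper's density construction appending one new bit equal to $1$ on $u$ and $0$ elsewhere, with $u\notin\mathcal{A}^+$ supplying the witness $\gamma\in w\setminus u$ needed for $(b\text{-}3)$. Parts $(a)$ and $(b)$, which the paper dismisses as routine, you fill in correctly with the standard $\Delta$-system and density arguments.
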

\begin{proof}
$(a)$ follows by a simple $\Delta$-system argument and $(b)$ is clear. Let us prove $(c).$ First assume that  $\{\omega\cdot\alpha_i+m: i<n   \}\in \mathcal{A}^+.$ Then for some $u\in \mathcal{A}, u \subseteq \{\omega\cdot\alpha_i+m: i<n   \},$ and since $\Vdash_{\QQ}$`` $\bigcap_{i<n}\ \lusim{a}_{\alpha_i, m} \subseteq \bigcap_{\alpha\in u} \lusim{a}_{\alpha, m}$'', so we can assume w.l.o.g. that   $\{\omega\cdot\alpha_i+m: i<n   \}\in \mathcal{A}.$ Now let $p\in \QQ.$ By extending $p$, if necessary, we can assume that  $\{\omega.\alpha_i+m: i<n   \} \subseteq \dom(p).$ But then by clause $(b-3),$ any $q\leq p$ forces ``$\bigcap_{i<n} \lusim{a}_{\alpha_i, m} \subseteq n(p)$''. The result follows immediately.

Conversely suppose that
 $\{\omega\cdot\alpha_i+m: i<n   \}\notin \mathcal{A}^+.$ 
Let $p \in \QQ$ and $k<\omega.$ We find $q \leq p$ and $k'> k$ such that $q \Vdash$``$k' \in \bigcap_{i<n} \lusim{a}_{\alpha_i, m}$''.
By extending $p$ we may assume that $\dom(p) \supseteq \{\omega\cdot\alpha_i+m: i<n   \}$
and $n(p) > k.$ Now define $q \leq p$ as follows:
\begin{itemize}
\item $\dom(q)=\dom(p)$.
\item $n(q)=n(p)+1$.
\item If $\alpha \in \dom(p) \setminus \{\omega\cdot\alpha_i+m: i<n   \},$ then $q(\alpha)=p(\alpha)^{\frown}$$\langle (n(p), 0) \rangle$.
\item If $i<n,$ then $q(\omega\cdot\alpha_i+m)=p(\omega\cdot\alpha_i+m)^{\frown}$$\langle (n(p), 1) \rangle$.
\end{itemize}
$q$ is easily seen to be well-defined and clearly
\begin{center}
 $q \Vdash$``$k< k' \in \bigcap_{i<n} \lusim{a}_{\alpha_i, m}$'',
\end{center}
where $k'=n(p).$ Let us show that $q \leq p.$ It suffices to show that it satisfies clause $(b-3)$ of Definition 5.9.
Thus let $u\in \mathcal{A}$ be such that $u \subseteq \dom(p)$. We are going to find some $\alpha \in u$
such that $q(\alpha)(n_p)=0.$ As $\{\omega\cdot\alpha_i+m: i<n   \}\notin \mathcal{A}^+,$  $ u \setminus \{\omega\cdot\alpha_i+m: i<n   \} \neq \emptyset.$ Let $\alpha \in u \setminus \{\omega\cdot\alpha_i+m: i<n   \}$. Then by our definition,
 $q(\alpha)(n_p)=0,$ as requested.
\end{proof}
We now consider the combinatorial principle $C^D_T(\kappa, J)$ in the forcing extensions by $\QQ_{\lambda, \mathcal{A}},$ and  show that the truth or falsity of it depends on the choice of $D$ and $\mathcal{A}.$ For the rest of this subsection, let $J = [\omega]^{<\omega}$, the ideal of bounded subsets of $\omega.$
In the next lemma we discuss conditions on $D$ and $\mathcal{A}$ which imply  $\neg C^D_T(\kappa, J)$ in the forcing extensions by $\QQ_{\lambda, \mathcal{A}}.$
\begin{lemma}
Assume:

$(1)$ $\aleph_0 <\kappa=cf(\kappa) \leq \lambda,$

$(2)$ $D$ is a filter on $\kappa$ with the $\Delta$-system $\aleph_0$-property,

$(3)$ $\mathcal{A} \subseteq [\lambda]^{< \aleph_0}$ and $T$ is a subtree of $\omega^{<\omega},$

$(4)$ There exists some $Y^*\in D^+$ such that;

$\hspace{0.5cm}$$(a)$ If $Y \subseteq Y^*, Y \neq \emptyset$ mod $D$, then there are $t\in T\cap \omega^n$ and distinct $\alpha_0, \dots, \alpha_{n-1}\in Y$

$\hspace{0.9cm}$ such that $\{   \omega.\alpha_i+t(i): i<n \}\in \mathcal{A}.$

$\hspace{0.5cm}$$(b)$ If   $t\in T\cap \omega^n$ and for $i<n,$ $Y_i \subseteq Y^*, Y_i \neq \emptyset$ mod $D$, then there are $\alpha_i\in Y_i,$

$\hspace{0.9cm}$ for $i<n$ such that  $\{   \omega.\alpha_i+t(i): i<n \}\notin \mathcal{A}.$

Then $\Vdash_{\QQ_{\lambda, \mathcal{A}}}$``$\neg C^D_T(\kappa, J)$''.
\end{lemma}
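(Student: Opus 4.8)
The plan is to work in $V[G]$ for a $\QQ_{\lambda,\mathcal{A}}$-generic $G$ and to exhibit a single matrix witnessing the failure of both clauses $(\alpha)$ and $(\beta)$ of $C^D_T(\kappa,J)$. The natural candidate is the generic matrix $\bar{A}=\langle \lusim{a}_{\alpha,n}[G]:\alpha<\kappa,\ n<\omega\rangle$ from Definition~5.9 (here the index $\theta$ is $\omega$, since $T\subseteq\omega^{<\omega}$, and $\omega\cdot\alpha+n<\kappa\le\lambda$ for $\alpha<\kappa$, so every entry is well defined). Throughout I write $\tilde D$ for the filter generated by $D$ in $V[G]$; by Lemma~5.5 (applied with $\theta=\aleph_0$, noting that $\QQ_{\lambda,\mathcal{A}}$ is c.c.c.\ by Lemma~5.11$(a)$) it still has the $\Delta$-system $\aleph_0$-property, and for any $X\in V$ one has $X\in\tilde D$ iff $X\in D$, hence $X\in\tilde D^+$ iff $X\in D^+$.

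First I would record the exact dictionary between the combinatorics of $\bar A$ and membership in $\mathcal{A}^+$, which is the evident generalization of Lemma~5.11$(c)$. For distinct $\alpha_0,\dots,\alpha_{n-1}<\kappa$ and any $t\in\omega^n$ the ordinals $\omega\cdot\alpha_i+t(i)$ are pairwise distinct, so the proof of~5.11$(c)$ applied to them gives
\[
\Vdash_{\QQ_{\lambda,\mathcal{A}}}\text{``}\textstyle\bigcap_{i<n}\lusim{a}_{\alpha_i,t(i)}\text{ is finite''}\iff\{\omega\cdot\alpha_i+t(i):i<n\}\in\mathcal{A}^+ .
\]
Since $J=[\omega]^{<\omega}$, ``finite'' means ``$=\emptyset$ mod $J$''; thus in $V[G]$ the intersection $\bigcap_{i<n}a_{\alpha_i,t(i)}$ is $\emptyset$ mod $J$ exactly when $\{\omega\cdot\alpha_i+t(i):i<n\}\in\mathcal{A}^+$, and is $\neq\emptyset$ mod $J$ otherwise.

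With this dictionary, clause $(\alpha)$ is to be refuted by hypothesis $(4)(a)$ and clause $(\beta)$ by hypothesis $(4)(b)$. For $(\alpha)$: a witness would be some $S\in\tilde D^+$ with $\{\omega\cdot\alpha_i+t(i):i<n\}\notin\mathcal{A}^+$ for every $t\in T\cap\omega^n$ and all distinct $\alpha_i\in S$; but $(4)(a)$ produces, on any $D$-positive $Y\subseteq Y^*$, a $t$ and distinct $\alpha_i\in Y$ with $\{\omega\cdot\alpha_i+t(i):i<n\}\in\mathcal{A}\subseteq\mathcal{A}^+$, i.e.\ an intersection that is $\emptyset$ mod $J$, contradicting the defining property of $S$. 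For $(\beta)$: given $t\in T\cap\omega^n$ and $S_0,\dots,S_{n-1}\in\tilde D^+$, hypothesis $(4)(b)$ yields $\alpha_i$ with $\{\omega\cdot\alpha_i+t(i):i<n\}\notin\mathcal{A}$, and I would upgrade this to $\notin\mathcal{A}^+$ using that in the situation of interest $\mathcal{A}$ consists of sets of the single size $n(*)$: for an $n(*)$-element set $\notin\mathcal{A}$ coincides with $\notin\mathcal{A}^+$, while any configuration of size $<n(*)$ is never in $\mathcal{A}^+$ and so yields an automatically infinite intersection. This reconciliation of $\mathcal{A}$ with $\mathcal{A}^+$ is a routine but necessary bookkeeping point.

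The main obstacle is bridging the two worlds: clauses $(\alpha)$ and $(\beta)$ quantify over $S,S_i\in\tilde D^+$, which are named in $V[G]$, whereas $(4)(a)$–$(4)(b)$ speak of ground-model $D$-positive subsets of $Y^*$. One cannot in general pass from a $\tilde D$-positive set to a ground-model $D$-positive subset (a c.c.c.\ forcing may add positive sets with no positive ground-model subset), so I would not attempt such a reduction. Instead I would argue by density directly in $\QQ_{\lambda,\mathcal{A}}$: the statement ``$\{\omega\cdot\alpha_i+t(i):i<n\}\in\mathcal{A}$'' (or $\in\mathcal{A}^+$) is decided outright, being a fact about $\mathcal{A}\in V$, so the only forcing content is ``$\alpha_i\in\lusim S$'' (resp.\ ``$\alpha_i\in\lusim S_i$''). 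Using that a condition forcing $\lusim S\in\tilde D^+$ forces $\lusim S$ to meet every member of $D$, together with the largeness of $Y^*$ and the preserved $\Delta$-system $\aleph_0$-property of $\tilde D$ to thin the relevant index sets into a $\Delta$-system before locating the configuration, one should find below any given condition the distinct indices realizing $(4)(a)$ or $(4)(b)$ inside $\lusim S$ (resp.\ the $\lusim S_i$). Making this density argument go through—in particular arranging that the positive witness may be taken to meet $Y^*$ in a positive set—is the delicate heart of the proof; everything else is the dictionary above and the $\mathcal{A}$/$\mathcal{A}^+$ bookkeeping.
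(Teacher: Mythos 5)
There is a genuine gap, and you point at it yourself: the step you call ``the delicate heart of the proof'' is exactly the step you leave unexecuted, and the direction you sketch for it is not the one that works. The paper's argument \emph{does} reduce to ground-model $D$-positive sets --- just not via the reduction you rule out. Given $p$ forcing clause $(\alpha)$ to hold for the generic matrix via a name $\lusim{S}$ with $p\Vdash \lusim{S}\subseteq Y^*,\ \lusim{S}\neq\emptyset$ mod $D$, one sets $X^*=\{\alpha\in Y^*: p\nVdash\text{``}\alpha\notin\lusim{S}\text{''}\}$. This set lies in $V$, is a $D$-positive \emph{superset} of every interpretation of $\lusim{S}$ (if $\kappa\setminus X^*\in D$ then $p$ would force $\lusim{S}=\emptyset$ mod $D$), and for each $\alpha\in X^*$ one fixes $p_\alpha\le p$ forcing $\alpha\in\lusim{S}$. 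The $\Delta$-system $\aleph_0$-property is then applied \emph{in $V$, to $D$ and to the finite sets $\dom(p_\alpha)$}, with further thinning so that the $p_\alpha$ agree on the root and have constant shape; this produces a $D$-positive $X_1\subseteq Y^*$ on which any finitely many $p_\alpha$ are pairwise compatible. Hypothesis $(4)(a)$ applied to $X_1$ yields $t$ and distinct $\alpha_0,\dots,\alpha_{n-1}\in X_1$ with $\{\omega\cdot\alpha_i+t(i):i<n\}\in\mathcal{A}$; a common extension $q$ of the $p_{\alpha_i}$ forces all $\alpha_i\in\lusim{S}$ and forces the intersection finite, contradicting what $p$ forces. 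Case $(\beta)$ is the same with $n$ names $\lusim{S}_i$, a two-stage $\Delta$-system thinning across the $n$ families, and $(4)(b)$.

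Your plan instead invokes ``the preserved $\Delta$-system $\aleph_0$-property of $\tilde D$'' in $V[G]$ and a bare assertion that ``one should find below any given condition the distinct indices.'' But the objects that must form a $\Delta$-system are the domains of the conditions $p_\alpha$, which live in $V$ and are invisible to $\tilde D$; without that ground-model thinning you have no guarantee that the $n$ conditions witnessing $\alpha_i\in\lusim{S}$ (resp.\ $\alpha_i\in\lusim{S}_i$) are simultaneously compatible, which is the entire content of the step. Your worry that a c.c.c.\ forcing may add positive sets with no positive ground-model subset is moot, since the argument needs a positive ground-model \emph{superset} ($X^*$ above), which always exists. The rest of your write-up --- the choice of the generic matrix, the generalization of the finiteness criterion to distinct $t(i)$'s, and the $\mathcal{A}$ versus $\mathcal{A}^+$ bookkeeping in case $(\beta)$, where the paper is indeed silently using that $\mathcal{A}$ consists of sets of a single size --- is correct and consistent with the paper.
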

\begin{proof}
Let $\QQ=\QQ_{\lambda, \mathcal{A}},$ and suppose $\nVdash_{\QQ}$``$\neg C^D_T(\kappa, J)$''. By Lemma 5.4, $\Vdash_{\QQ}$``$\langle \lusim{a}_{\alpha, n}: \alpha \in Y^*, n<\omega \rangle$ is a $(\kappa\times \omega)$-matrix for $D$ (i.e., $Y^*\in D^+$)'',  so by our assumption one of the following holds:

{\bf Case 1.} There are $p\in \QQ $ and  $\lusim{X}$ such that:

$\hspace{0.5cm}$$p \Vdash$``$\lusim{X} \subseteq Y^*, \lusim{X} \neq \emptyset$ mod $D$'',

$\hspace{0.5cm}$$p \Vdash$``For every $t\in T\cap \omega^n$ and  distinct $\alpha_0, \dots , \alpha_{n-1}\in \lusim{X}, \bigcap_{i<n}\lusim{a}_{\alpha_i, t(i)} \neq \emptyset$ mod $J$''.

Let $X^*=\{\alpha\in Y^*: p \nVdash$``$\alpha\notin \lusim{X}$''$   \}.$ Then $X^*\in V$ and $p\Vdash$``$\lusim{X} \subseteq X^* \subseteq Y^*$'', so  $X^* \neq \emptyset$ mod $D$. For any $\alpha\in X^*,$ let $p_\alpha \leq p$ be such that $p_\alpha\Vdash$``$\alpha\in \lusim{X}$''. As $D$ has the $\Delta$-system $\aleph_0$-property, we can find $X_1 \subseteq X^*, X_1 \neq \emptyset$ mod $D$ such that $\{\dom(p_\alpha): \alpha\in X_1 \}$ forms a $\Delta$-system with some root, say, $\Delta.$ Let $\Delta=\{ \beta_0, \dots, \beta_{k^{**}-1}\},$ and for each $\alpha\in X_1,$ let
\begin{center}
$\dom(p_\alpha)=\{\beta_{\alpha, j}: j<k_\alpha \},$
\end{center}
where for $j< k^{**}, \beta_{\alpha, j}=\beta_j.$ By shrinking $X_1,$ and using the $\Delta$-system $\aleph_0$-property of $D$, we can further suppose that:
\begin{enumerate}
\item There is some $k^*<\omega$ such that $\alpha\in X_1 \Rightarrow k_\alpha=k^*,$
\item $\alpha, \beta\in X_1 \Rightarrow p_\alpha \upharpoonright \Delta = p_\beta \upharpoonright \Delta,$
\item $\{p_\alpha(\beta_{\alpha, j}): \alpha\in X_1  \}$ is constant, for each $j<k^*.$
\end{enumerate}
Now by $(4-a),$ there are $t\in T\cap \omega^n$ and distinct $\alpha_0, \dots, \alpha_{n-1}\in X_1$ such that $\{   \omega.\alpha_i+t(i): i<n \}\in \mathcal{A}.$ Let $q$ be a common extension of  $p_{\alpha_i}, i<n$, which exists by our above assumptions. Then $q\Vdash$``$\alpha_0, \dots, \alpha_{n-1}\in \lusim{X}$'', and by Lemma 5.10$(c),$
\begin{center}
$q\Vdash$``$ \bigcap_{i<n}\lusim{a}_{\alpha_i, t(i)}$ is finite'',
\end{center}
which is a contradiction.

{\bf Case 2.} There are $p\in \QQ, t\in T\cap \omega^n$ and $\lusim{X}_0, \dots, \lusim{X}_{n-1}$ such that

$\hspace{0.5cm}$$p \Vdash$``$\lusim{X}_i \subseteq Y^*, \lusim{X}_i \neq \emptyset$ mod $D$'' for all $i<n$,

$\hspace{0.5cm}$$p \Vdash$``If $\alpha_i\in \lusim{X}_i$ are distinct, then $\bigcap_{i<n}\lusim{a}_{\alpha_i, t(i)} = \emptyset$ mod $J$''.

For $i<n$ set $X_i^*= \{\alpha\in Y^*: p \nVdash$``$\alpha\notin \lusim{X}_i$''$   \}.$ Then $X_i^*\in V$ and $p\Vdash$``$\lusim{X}_i \subseteq X_i^* \subseteq Y^*$'', so  $X_i^* \neq \emptyset$ mod $D$.
We now proceed by induction on $i<n$ and find $p_{i,\alpha} \leq p$ for $\alpha\in X_i^*$ so that:
\begin{enumerate}
\item $p_{i,\alpha}\Vdash$``$\alpha\in \lusim{X}_i$'',
\item If $\alpha \in X_i^* \cap X_j^*,$ for $i<j<n,$ then $p_{i, \alpha}=p_{j, \alpha}.$
\end{enumerate}
Now proceed as in case 1, and shrink each $X_i^*$ to some $X_{i,1}$ so that:
\begin{enumerate}
\item [(3)]$X_{i,1} \neq \emptyset$ mod $D$,
\item [(4)]$\{\dom(p_{i,\alpha}): \alpha\in X_{i,1} \}$ forms a $\Delta$-system with  root, say, $\Delta_i=\{ \beta_{i,0}, \dots, \beta_{i,k_i^{**}-1}\}.$
\item [(5)]For some $k_i^*<\omega$, $\dom(p_{i, \alpha})=\{\beta_{i,\alpha, j}: j<k_i^* \},$ where for $j< k_i^{**}, \beta_{i,\alpha, j}=\beta_{i,j}.$
\item [(6)]$\alpha, \beta\in X_{i,1} \Rightarrow p_{i,\alpha} \upharpoonright \Delta_i = p_{i,\beta} \upharpoonright \Delta_i,$
\item [(7)]$\{p_{i,\alpha}(\beta_{i,\alpha, j}): \alpha\in X_{i,1}  \}$ is constant, for each $j<k_i^*.$
\end{enumerate}
We again use the $\Delta$-system argument successively on $X_{i, 1}$'s to shrink them to some $X_{i,2}, i<n,$ so that for all $i,j<n$:
\begin{enumerate}
\item [(8)] $X_{i,2}\neq \emptyset$ mod $D$,
\item [(9)] For all $\alpha\in X_{i,2}, \beta\in X_{j,2}, \dom(p_{i,\alpha})\cap \dom(p_{j, \beta})=\Delta_{i,j},$ for some fixed set $\Delta_{i,j},$
\item [(10)] For all $\alpha\in X_{i,2}, \beta\in X_{j,2}, p_{i,\alpha}\upharpoonright \Delta_{i,j}=p_{j,\beta}\upharpoonright \Delta_{i,j}.$
\end{enumerate}
Now by $(4-b),$ there are $\alpha_i\in X_{i,2}, i<n,$ such that $\{   \omega.\alpha_i+t(i): i<n \}\notin \mathcal{A}.$ Let $q$ be a common extension of  $p_{i,\alpha_i}, i<n$, which exists by our above assumptions. Then $q\Vdash$``$\alpha_0, \dots, \alpha_{n-1}\in \lusim{X}$'', and by Lemma 5.10$(c),$
\begin{center}
$q\Vdash$``$ \bigcap_{i<n}\lusim{a}_{\alpha_i, t(i)}$ is infinite'',
\end{center}
which is a contradiction.
The lemma follows.
\end{proof}
we now discuss conditions on $D$ and $\mathcal{A}$ which imply  $C^D_T(\kappa, J)$ in the forcing extensions by $\QQ_{\lambda, \mathcal{A}}.$
\begin{lemma}
Assume:

$(1)$ $D$ is a $\kappa$-complete filter on $\kappa,$ where $\kappa=cf(\kappa) > \aleph_0$ and $\forall \alpha<\kappa (|\alpha|^{\aleph_0} < \kappa),$

$(2)$ $T \subseteq \omega^{<n(*)}$ is a subtree, where $n(*)< \omega,$

$(3)$ $\lambda\geq \kappa$ and $\mathcal{A} \subseteq [\lambda]^{<\aleph_0},$

$(4)$ If $Y \subseteq \kappa, Y \neq \emptyset$ mod $D$, and if $\langle w_\alpha: \alpha\in Y \rangle$ is such that $w_\alpha  \in [\lambda]^{\leq \aleph_0},$ for $\alpha\in Y,$ then there exists $X \subseteq Y, X \neq \emptyset$ mod $D$  such that;

$\hspace{0.5cm}$$(a)$ $\langle w_\alpha: \alpha\in X \rangle$ form a $\Delta$-system with root, say, $w^*$ such that for all $\alpha \neq \beta$ in

$\hspace{0.9cm}$ $X, w_\alpha \cap w_\beta=w^*$, and for all $\gamma\in w^*, otp(w_\alpha\cap \gamma)=otp(w_\beta)\cap \gamma$,

$\hspace{0.5cm}$$(b)$ If $\alpha_0, \dots, \alpha_{n(*)-1}\in X$ are distinct, then
\begin{center}
$\forall u (u\in \mathcal{A}$ and $u \subseteq \bigcup_{i<n(*)}w_{\alpha_i} \Rightarrow \exists i<n(*), u \subseteq w_{\alpha_i}).$
\end{center}
Then $\Vdash_{\QQ_{\lambda, \mathcal{A}}}$``$C^D_T(\kappa, J)$''.
\end{lemma}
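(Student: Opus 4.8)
The goal is to verify that the combinatorial principle $C^D_T(\kappa, J)$ holds in the generic extension by $\QQ_{\lambda, \mathcal{A}}$, given the four hypotheses, the key one being assumption $(4)$ which provides, for any $D$-positive indexing of countable sets $\langle w_\alpha : \alpha \in Y\rangle$, a $D$-positive $\Delta$-system refinement satisfying the ``spreading'' property $(4\text{-}b)$. The strategy is to take an arbitrary $(\kappa \times \theta)$-matrix $\langle \lusim{b}_{\alpha, \xi} : \alpha < \kappa, \xi < \theta\rangle$ of names for subsets of $\omega$ in the extension, and show that in $V[G]$ one of the alternatives $(\alpha)$ or $(\beta)$ of Definition 3.1 must hold. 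Since $\QQ = \QQ_{\lambda, \mathcal{A}}$ is c.c.c.\ by Lemma 5.10$(a)$, each $\lusim{b}_{\alpha, \xi}$ is (essentially) a nice name whose support $w_{\alpha, \xi} \subseteq \lambda$ is countable; collecting these over $\xi < \theta$ I would set $w_\alpha = \bigcup_{\xi < \theta} w_{\alpha, \xi}$, which has size at most $\theta$. (Here I expect that one reduces to the case $\theta = \aleph_0$, or at least works with the countable supports one name at a time; this is where the precise bookkeeping lives.)

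\textbf{The reduction via the $\Delta$-system.} First I would apply hypothesis $(4)$ to the sequence $\langle w_\alpha : \alpha \in \kappa\rangle$ (or a $D$-positive subsequence thereof) to extract $X \subseteq \kappa$, $X \neq \emptyset$ mod $D$, whose supports form a $\Delta$-system with root $w^*$, with the order-type coherence in $(4\text{-}a)$ and the crucial spreading property $(4\text{-}b)$: for distinct $\alpha_0, \dots, \alpha_{n(*)-1} \in X$, every $u \in \mathcal{A}$ with $u \subseteq \bigcup_{i} w_{\alpha_i}$ already lies inside a single $w_{\alpha_i}$. The point of $(4\text{-}b)$, via the forcing isomorphism observed in Remark 5.11 together with Lemma 5.10$(c)$, is that on the index set $X$ the forcing restricted to the union of the supports \emph{factors} essentially as a product, so the sets $\lusim{a}_{\alpha_i, t(i)}$ indexed over distinct $\alpha_i \in X$ behave independently: their finite intersections are controlled precisely by whether the corresponding index tuple lands in $\mathcal{A}^+$. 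The order-type coherence in $(4\text{-}a)$ is what lets me invoke an automorphism of $\QQ$ that permutes the $w_\alpha$'s, transferring a statement proved for one tuple of indices to any other tuple of the same ``type'', thereby forcing homogeneity.

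\textbf{Deciding which alternative holds.} With $X$ and $w^*$ fixed, I would argue by a homogeneity/automorphism analysis in the style of the proof of Lemma 5.12, but run in the positive direction. For each $t \in T \cap \omega^n$ one examines whether, along $X$, the intersection $\bigcap_{i<n} \lusim{b}_{\alpha_i, t(i)}$ can be forced to be infinite (i.e.\ nonempty mod $J$) for distinct $\alpha_i$; by the $\Delta$-system coherence and the automorphisms of $\QQ$ permuting the root-coherent supports, the answer does not depend on the particular distinct tuple chosen, only on $t$. If for \emph{every} $t \in T$ and all distinct tuples from $X$ the intersection is forced $\neq \emptyset$ mod $J$, then taking $S = X \in D^+$ gives case $(\alpha)$. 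Otherwise, for some $t \in T \cap \omega^n$ the intersection is forced $= \emptyset$ mod $J$ along $X$; I would then set $S_i = X$ for each $i < n$ and verify case $(\beta)$, again using that the automorphism-homogeneity makes the emptiness uniform across all distinct choices $\alpha_i \in S_i$.

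\textbf{The main obstacle.} The delicate point is establishing the homogeneity that makes the dichotomy clean, namely that the truth value of ``$\bigcap_{i<n} \lusim{b}_{\alpha_i, t(i)} = \emptyset \bmod J$'' depends only on $t$ and not on the particular distinct $\alpha_0, \dots, \alpha_{n-1} \in X$. This requires combining three ingredients: the product/factoring behaviour of $\QQ_{\lambda, \mathcal{A}}$ on root-coherent supports from Remark 5.11, the automorphisms of $\QQ$ induced by support-permutations respecting the order-type condition in $(4\text{-}a)$, and the translation (Lemma 5.10$(c)$) between finiteness of intersections of the generic reals and membership of the corresponding index set in $\mathcal{A}^+$. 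Once this uniformity is in hand, the spreading property $(4\text{-}b)$ guarantees that at the positive stage no $\mathcal{A}$-configuration spoils the required nonemptiness, so the verification of whichever case arises is routine. I expect the bulk of the work, and the only genuinely subtle step, to be setting up this automorphism argument carefully so that the names $\lusim{b}_{\alpha, \xi}$ — which are arbitrary names, not the canonical generic reals $\lusim{a}_{\alpha, n}$ — are correctly transported.
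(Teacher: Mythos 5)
Your overall plan (extract supports, apply hypothesis $(4)$ to get a $\Delta$-system with the spreading property, then use automorphisms of $\QQ_{\lambda,\mathcal{A}}$ induced by the order-isomorphisms of the supports) is pointed in the right direction, but there are two genuine gaps. First, you never carry out — and never identify the hypotheses needed for — the uniformization of the \emph{names}. The order-type coherence in $(4$-$a)$ only aligns the supports $w_\alpha$; for an automorphism $\hat g$ built from the order-isomorphisms $g_{\alpha,\beta}\colon w_\beta\cong w_\alpha$ to actually carry $\lusim{b}_{\beta,\xi}$ to $\lusim{b}_{\alpha,\xi}$, you must first shrink to a $D$-positive set on which the names themselves are isomorphic: same maximal antichains $q_{\alpha,n,m,k}$ up to $g_{\alpha,\beta}$ and the same truth-value functions $t_{n,m}$. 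This is a separate counting argument, and it is exactly where hypothesis $(1)$ — $\kappa$-completeness of $D$ together with $\forall\alpha<\kappa\,(|\alpha|^{\aleph_0}<\kappa)$ — is used; your proposal never invokes these assumptions, yet without them the automorphism step has nothing to act on. You flag this as ``the only genuinely subtle step'' but do not resolve it.

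Second, the homogeneity you want to conclude is too strong and is false as stated. An automorphism $\hat g$ transfers \emph{forcing} statements: if $q\Vdash$``$\bigcap_i\lusim{b}_{\alpha_i,t(i)}=\emptyset$ mod $J$'', then $\hat g^{-1}(q)$ forces the corresponding statement for the image tuple — but $\hat g^{-1}(q)$ need not lie in $G$, so the truth value in $V[G]$ of ``$\bigcap_i\lusim{b}_{\beta_i,t(i)}=\emptyset$ mod $J$'' genuinely can depend on the tuple, not just on $t$. Consequently your plan of witnessing case $(\beta)$ with $S_i=X$ for all $i$ does not work. The correct move (taken in the paper) is to argue by contradiction: the failure of case $(\alpha)$ on $X$ yields one bad tuple $\alpha_0,\dots,\alpha_{n(*)-1}$ and a condition $q\in G$ forcing its intersection empty mod $J$; one then defines in the extension the sets $\lusim{Y}_i=\{\beta\in X: g_{\beta,\alpha_i}(q\restriction w_{\alpha_i})\in\dot G\}$, proves each is $D$-positive by a density argument (using finiteness of conditions and the $\Delta$-system to find $\beta$ with $\dom(r)\cap w_\beta\setminus w^*=\emptyset$), and only for $\beta_i\in\lusim{Y}_i$ does the automorphism argument transfer the emptiness, yielding case $(\beta)$ and the desired contradiction. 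Without the $\lusim{Y}_i$ device your dichotomy does not close.
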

\begin{proof}
Let $\QQ=\QQ_{\lambda, \mathcal{A}},$ and suppose $G$ is $\QQ$-generic over $V$. Assume on the contrary that $V[G]\models$``$\neg C^D_T(\kappa, J)$''. Let $p\in G$, $\lusim{X}$ and $\langle \lusim{b}_{\alpha, n}: \alpha\in \lusim{X}, n<\omega \rangle$ be such that

$\hspace{0.5cm}$ $p\Vdash$``$\lusim{X} \subseteq \kappa, \lusim{X} \neq \emptyset$ mod $D$'',

$\hspace{0.5cm}$ $p\Vdash$``$\langle \lusim{b}_{\alpha, n}: \alpha\in \lusim{X}, n<\omega \rangle$ is a counterexample to $C^D_T(\kappa, J)$.

Let $X_1=\{\alpha < \kappa: p \nVdash$``$\alpha\notin \lusim{X}$''$\}.$ Then $X_1\in V$ and $p\Vdash$``$\lusim{X} \subseteq X_1$'', so  $X_1 \neq \emptyset$ mod $D$. For any $\alpha\in X_1,$ let $p_\alpha \leq p$ be such that $p_\alpha\Vdash$``$\alpha\in \lusim{X}$''. We may further assume that $\alpha\in \dom(p).$

For each $\alpha\in X_1,$ we can find $\langle  q_{\alpha, n,m,k}, t_{\alpha, n,m}: n, m, k<\omega  \rangle$ such that:

\begin{enumerate}
\item $t_{\alpha, n,m}:\omega \rightarrow 2,$
\item $\{q_{\alpha, n,m,k}: m<\omega \} \subseteq \QQ$ is a maximal antichain below $p$,
\item $q_{\alpha, n,m,k}\Vdash$``$k\in \lusim{b}_{\alpha, n}$''$ \Leftrightarrow t_{\alpha, n,m}(k)=1.$
\end{enumerate}
We may note that then
\begin{center}
$p\Vdash_{\QQ}$``$\lusim{b}_{\alpha, n}=\{\langle q_{\alpha, n,m,k}, k \rangle: m,k<\omega, t_{\alpha,n,m}(k)=1 \}$'',
\end{center}
and so from now on we assume $\lusim{b}_{\alpha, n}$ is of the form. Let
\begin{center}
$w_\alpha=\dom(p_\alpha) \cup \bigcup \{\dom(q_{\alpha,,n,m,k}): n, m, k<\omega   \} \cup \{\omega.\alpha+n: n<\omega  \}.$
\end{center}
Then each $w_\alpha\in [\lambda]^{\aleph_0}.$ As $D$ is $\kappa$-complete and $\kappa>2^{\aleph_0},$ we can find $X_2, \bar{g}$ and $\bar{t}$ such that

\begin{enumerate}
\item [(4)] $X_2 \subseteq X_1, X_2 \neq \emptyset$ mod $D$,
\item [(5)] $\bar{t}=  \langle  t_{n,m}: n,m<\omega \rangle$ and $\forall \alpha\in X_2, t_{\alpha,n,m}=t_{n,m},$
\item [(6)] For all $\alpha, \beta \in X_2,$ $otp(w_\alpha)=otp(w_\beta)$,
\item [(7)] $\bar{g}= \langle  g_{\alpha,\beta}: \alpha, \beta\in X_2 \rangle,$
\item [(8)] $g_{\alpha, \beta}: w_\beta \cong w_\alpha$ is an order preserving bijection,
\item [(9)] $g_{\alpha,\beta}(\beta)=\alpha,$
\item [(10)] $g_{\alpha,\beta}``[q_{\beta,n,m,k}]=q_{\alpha, n,m,k}.$
\end{enumerate}
Consider $\langle w_\alpha: \alpha\in X_2  \rangle.$ By our assumption, we can find $X_3$ and $w^*$ such that
\begin{enumerate}
\item [(11)] $X_3 \subseteq X_2, X_3 \neq \emptyset$ mod $D$,
\item [(12)] For all $\alpha \neq \beta$ in $X_3, w_\alpha \cap w_\beta=w^*$, and for all $\gamma\in w^*, otp(w_\alpha\cap \gamma)=otp(w_\beta)\cap \gamma$,
\item [(13)] if $\alpha_0, \dots, \alpha_{n(*)-1} \in X_3$ are distinct, then
\begin{center}
$\forall u (u\in \mathcal{A}$ and $u \subseteq \bigcup_{i<n(*)}w_{\alpha_i} \Rightarrow \exists i<n(*), u \subseteq w_{\alpha_i}).$
\end{center}
\end{enumerate}
Note that for $\alpha \neq \beta$ in $X_3, g_{\alpha,\beta} \upharpoonright w^* =id \upharpoonright w^*$ (by $(12)$). As the conclusion of the lemma fails, we can find $q\leq p, q\in G, t\in T$ and $\alpha_0, \dots, \alpha_{n(*)-1}\in X_3$ such that
\begin{center}
$q\Vdash$``$\bigcap_{i<n(*)}\lusim{b}_{\alpha_i, t(i)} = \emptyset$ mod $J$''.
\end{center}
We may suppose that $\dom(q) \subseteq \bigcup_{i<n(*)}w_{\alpha_i}.$\footnote{This is because, by our representation of $\lusim{b}_{\alpha_i, t(i)},$ we can imagine each $\lusim{b}_{\alpha_i, t(i)}$ as a $\QQ\upharpoonright w_{\alpha_i}$-name.}

For $\beta\in X_3$ and $i<n(*)$ set
\begin{center}
$q_{i,\beta}=g_{\beta, \alpha_i}(q \upharpoonright w_{\alpha_i}) \in \QQ \upharpoonright w_\beta.$
\end{center}
Let $\lusim{Y}_i$ be such that
\begin{center}
$\Vdash_{\QQ}$``$\lusim{Y}_i=\{\beta\in X_3: q_{i,\beta} \in \dot{G}_{\QQ}    \}$''.
\end{center}
\begin{claim}
$\Vdash_{\QQ}$``$\lusim{Y}_i \neq \emptyset$ mod $D$''.
\end{claim}
\begin{proof}
Assume not; so there are $r\in G, r\leq q$ and $X\in D^+$ such that $r\Vdash$``$X \cap \lusim{Y}_i=\emptyset$''. As $\dom(r)$ is finite, we can find $\beta\in X$ so that $\dom(r)\cap w_\beta \setminus w^* =\emptyset.$ But then $r, q_{i,\beta}$ are compatible, and any common extension of them forces ``$\beta\in X \cap \lusim{Y}_i$'', which is impossible.
\end{proof}
We show that if $\Vdash_{\QQ}$``$\beta_i\in \lusim{Y}_i$'', for $i<n(*),$ then $\Vdash_{\QQ}$``$\bigcap_{i<n(*)}\lusim{b}_{\beta_i,t(i)} =\emptyset$ mod $J$''.
So assume $V[G]\models$``$\beta_i\in Y_i=\lusim{Y}_i[G]$''. Then $g=\bigcup_{i<n(*)}g_{\alpha_i, \beta_i}$ is an order preserving bijection from $\bigcup_{i<n(*)}w_{\beta_i}$ onto $\bigcup_{i<n(*)}w_{\alpha_i},$ and we can extend it to an automorphism of $\lambda,$ in the natural way, so that its restriction to $\lambda\setminus (\bigcup_{i<n(*)}w_{\beta_i} \cup \bigcup_{i<n(*)}w_{\alpha_i})$ is identity. We denote the resulting function still by $g$. $g$ easily extends to an  automorphism $\hat{g}: \QQ \cong \QQ$ of $\QQ$, which in turn also extends to an automorphism of nice names of $\QQ.$

For $i<n(*), q_{i,\beta_i}\in G_{\QQ},$ so
\begin{center}
$\hat{g}(q_{i,\beta_i})=q \upharpoonright w_{\alpha_i} \in \hat{g}``[G].$
\end{center}
Hence, $q= \bigcup_{i<n(*)}q \upharpoonright w_{\alpha_i} \in \hat{g}``[G].$ But
it is easily seen that $\hat{g}(\lusim{b}_{\beta_i, t(i)})=\lusim{b}_{\alpha_i, t(i)},$ and so
\begin{center}
$\hat{g}^{-1}(q)\Vdash$``$\bigcap_{i<n(*)}\lusim{b}_{\beta_i, t(i)} = \emptyset$ mod $J$''.
\end{center}
On the other hand $\hat{g}^{-1}(q)\in G,$ and the result follows.
\end{proof}
\begin{remark}
Condition $(4-b)$ is implicitly used in the argument to guarantee that the restricted conditions and their union which we defined are well-defined.
See also Remark 5.4.
\end{remark}

\subsection{Proof of Theorem 5.2.} Finally in this subsection we present the proof of theorem 5.2. Thus let $n(*), \theta, \kappa, \chi$ and $D$ be as above and $J=[\omega]^{<\omega}.$ Consider the forcing notion $\PP=\PP_\chi * \lusim{\QQ_{\kappa, \mathcal{A}}},$ where $\mathcal{A} \subseteq [\kappa]^{<\aleph_0}$ is the set added by $\PP_\chi$. It follows from Lemmas 5.7 and 5.10 that $\PP$ is a cofinality preserving forcing notion.

Firs we show that $C^D_n(\kappa, J)$ holds for $n<n(*).$ It suffices to show that in $V^{\PP_\chi},$ the pair $(n(*), D)$ satisfies the the demands in Lemma 5.12. Conditions $(1)-(3)$ from the lemma are clear. To prove $(4-a)$, let $\Vdash_{\PP_\chi}$``$\lusim{Y} \subseteq \kappa, \lusim{Y} \neq \emptyset$ mod $D$,  and  $\langle \lusim{w}_\alpha: \alpha\in \lusim{Y}   \rangle$ is a sequence of countable subsets of $\lambda$''. Let $i<\chi$ be such that $\lusim{Y}$ and $\langle \lusim{w}_\alpha: \alpha\in \lusim{Y}   \rangle$ are $\PP_i$-names. By the fact that in $V^{\MPB_i},$ $D$ has has the $\Delta$-system $\theta$-property (see Lemma 5.5), we can find $\PP_i$-names $\lusim{X}$ and $\lusim{w}^*$ such that:
\begin{enumerate}
\item [(1)] $\Vdash_{\PP_i}$``$\lusim{X} \subseteq \lusim{Y}, \lusim{X} \neq \emptyset$ mod $D$'',
\item [(2)] $\Vdash_{\PP_i}$``$\langle \lusim{w}_\alpha: \alpha\in \lusim{X}   \rangle$ forms a $\Delta$-system with root $\lusim{w^*}$''.
\end{enumerate}
Then for some $j\in (i,\chi),$ $\lusim{X}=\lusim{Y}_j$ and $\langle \lusim{w}^*, \langle \lusim{w}_\alpha: \alpha\in \lusim{Y} \rangle  \rangle =\langle \lusim{w}^*_j, \langle \lusim{w}^j_\alpha: \alpha\in \lusim{Y}_j \rangle  \rangle$. Now by our definition of $\QQ_j,$ we can find $\lusim{Z}\in V^{\PP_{j+1}}$ such that
\begin{enumerate}
\item [(3)] $\Vdash_{\PP_{j+1}}$``$\lusim{Z} \subseteq \lusim{Y}_j, \lusim{Z} \neq \emptyset$ mod $D$'',
\item [(4)] $\Vdash_{\PP_{j+1}}$``If $\alpha_0, \dots \alpha_{n(*)-1}\in \lusim{Z}$ are distinct, then for all $u\in\mathcal{A}, u \subseteq \bigcup_{l<n(*)}\lusim{w}^j_{\alpha_l} \Rightarrow \exists l<n(*), u \subseteq \lusim{w}^j_{\alpha_l}$''.
\end{enumerate}
The result follows immediately, as then the above are also forced to be true by $\PP_\chi.$

Now we show that  $C^D_{n(*)}(\kappa, J)$ fails. Let $T=\omega^{n(*)}.$  We show that
\begin{center}
$\Vdash_{\PP_\chi * \lusim{\QQ}_{\lambda, \mathcal{A}}}$``$ \langle  \lusim{a}_{\alpha, n}: \alpha<\kappa, n<\omega    \rangle$ exemplify $\neg C^D_{n(*)}(\kappa, J)$'',
\end{center}
where the names $\lusim{a}_{\alpha, n}$ are defined just after Definition 5.8. To this end, we check conditions in Lemma 5.11.  Conditions $(1)-(3)$ from the lemma are clear. For $(4-a),$ assume on the contrary that in $V^{\PP_\chi}, Y\neq \emptyset$ mode $D$ is given and $p\in \PP_\chi, p \Vdash$``$\lusim{Y}$ is a counterexample for $(4-a)$''. In $V$, let $X_1=\{\delta <\kappa: p \nVdash$``$\delta \notin \lusim{Y}$''$\}.$ Then $X_1\in V$ and $X_1 \neq \emptyset$ mod $D$. For any $\delta\in X_1,$ let $p_\delta \leq p$ be such that $p_\delta\Vdash$``$\delta\in \lusim{Y}$''.

As in the proof of Lemma 5.7$(b)$, and using the fact that $D$ has the $\Delta$-system property, we can find $X_2 \subseteq X_1, X_2 \neq \emptyset$ mod $D$ such that $\langle  \dom(p_\delta): \delta\in X_2 \rangle$ form a $\Delta$-system with root $\Delta$ and for all $\delta, \gamma\in X_2, p_\delta \upharpoonright \Delta \parallel p_\gamma \upharpoonright \Delta$ ($p_\delta \upharpoonright \Delta$ is compatible with $p_\gamma \upharpoonright \Delta$) Now let $t \in \omega^{n(*)}$ and let $\delta_0, \dots, \delta_{n(*)-1}$ be in $X_2$ such that for each $l,$ $\delta_{l+1}> \sup\{w^{p_{\delta_j}(0)}: j \leq l    \}$. Let $q$ be an extension of all $p_{\delta_l}, l<n(*)$ such that
\begin{center}
$t\in \omega^{n(*)} \Rightarrow \{\omega.\delta_l+t(l): l<n(*)   \}\in \mathcal{A}^{q(0)},$
\end{center}
and
\begin{center}
$u\in \mathcal{A}^{q(0)}, k<n(*), v \subseteq \bigcup\{w^{p_{\delta_l}}: l<n(*), l\neq k    \} \Rightarrow (\exists l)v \subseteq w^{p_{\delta_l}}.$
\end{center}
For example we can set
\begin{center}
$q(0)= \langle \bigcup_{l<n(*)}w^{p_{\delta_l}(0)} \cup \{\omega\cdot \delta_l+t(l): l < n(*)    \}, \bigcup_{l<n(*)}\mathcal{A}^{p_{\delta_l}(0)} \cup \{ \{\omega.\delta_l+t(l)   \}: t\in \omega^{n(*)}     \}   \rangle$.
\end{center}
Then $q \leq p$ and $q\Vdash$``$\lusim{Y}$ can not be a counterexample to $(4-a)$'', a contradiction.

For $(4-b),$ again assume for some $p\in \PP_\chi, t\in T$ and $\lusim{Y}_i, i<n(*),$ we have $p\Vdash$``$t, \langle \lusim{Y}_i: i<n(*)  \rangle$ are counterexample to $(4-b)$''. For each $i<n(*)$, let $X_i^*=\{\delta <\kappa: p \nVdash$``$\delta \notin \lusim{Y}_i$''$\}.$ Then $X_i^*\in V$ and $X^*_{i} \neq \emptyset$ mod $D$. For any $\delta\in X_{i}^*,$ let $p_{i,\delta} \leq p$ be such that $p_{i,\delta} \Vdash$``$\delta\in \lusim{Y}_i$''. Now proceed as in the proof of Lemma 5.11, case 2, to shrink each $X_i^*$ to some $X_{i, 2},$ such that:
\begin{enumerate}
\item [(5)] $X_{i, 2} \neq \emptyset$ mod $D$,
\item [(6)] $\langle  \dom(p_{i,\delta}): \delta\in X_{i,2} \rangle$ form a $\Delta$-system with root $\Delta_i,$
\item [(7)] $\delta, \gamma\in X_{i,2}, p_{i\delta} \upharpoonright \Delta \parallel p_{i,\gamma} \upharpoonright \Delta,$
\item [(8)] For all $\delta\in X_{i,2}, \gamma\in X_{j,2}, \dom(p_{i,\delta})\cap \dom(p_{j, \gamma})=\Delta_{i,j},$ for some fixed set $\Delta_{i,j},$
\item [(9)] For all $\delta\in X_{i,2}, \gamma\in X_{j,2}, p_{i,\delta}\upharpoonright \Delta_{i,j} \parallel p_{j,\gamma}\upharpoonright \Delta_{i,j}.$
\end{enumerate}
Let $\delta_l\in X_{l,2}, l<n(*)$. Let $q$ be an extension of all $p_{l, \delta_l}, l<n(*)$, such that
\begin{center}
$q(0)= \langle \bigcup_{l<n(*)}w^{p_{\delta_l}} \cup \{ \omega \cdot \delta_l+n: n<\omega, l< n(*)    \}, \bigcup_{l<n(*)}\mathcal{A}^{p_{\delta_l}}     \}   \rangle$.
\end{center}
Then $q \leq p$ and
$\{\omega.\delta_l+t(l): l<n(*)   \}\notin \mathcal{A}^q,$
so
\begin{center}
$q\Vdash$``$\{\omega.\delta_l+t(l): l<n(*)   \}\notin \lusim{\mathcal{A}}$''.
 \end{center}
 So $q\Vdash$``$t, \langle \lusim{Y}_i: i<n(*)  \rangle$ can not be  counterexamples to $(4-b)$'', a contradiction.

\section{On $C^s(\kappa)$ v.s. $C(\kappa)$}
In this section, we consider the difference between the combinatorial principles $C^s(\kappa)$ and $C(\kappa),$ and prove the consistency of ``$C(\kappa)$ holds but $C^s_T(\kappa)$ fails for all non-trivial $T$''.

\begin{lemma}
Assume that:

$(1)$ $\kappa=cf(\kappa) > \aleph_0,$

$(2)$ $S^* \subseteq \kappa$ is a stationary subset of $\kappa,$

$(3)$ $\bar{C}=\langle C_\delta: \delta\in S^* \rangle$ is  such that:

$\hspace{0.5 cm}$ $(a)$ Each $C_\delta$ is a club of $\delta,$

$\hspace{0.5 cm}$ $(b)$ For every club $E$ of $\kappa,$ the set $\{\delta\in S^*: \sup(C_\delta\setminus E)=\delta\}$ is not stationary.

$(4)$ $2 \leq n(*) < \omega$

Then there is $\mathcal{A} \subseteq [\kappa]^{n(*)}$ such that:

$(\alpha)$ If $S_l \subseteq S^*$ is stationary for $l<n(*),$ then we can find $\alpha_l, \beta_l\in S_l, $ for $l<n(*)$ such

$\hspace{0.5 cm}$ that $\alpha_0 < \dots < \alpha_{n(*)-1} < \beta_0 < \dots < \beta_{n(*)-1}$ and $\{\alpha_l: l<n(*)  \} \in \mathcal{A}, \{\beta_l: l<$

$\hspace{0.5 cm}$ $n(*)  \} \notin \mathcal{A}$

$(\beta)$ If $Y\subseteq \kappa$ is unbounded, then for some unbounded subset $Z \subseteq Y$ we have $[Z]^{n(*)}\cap \mathcal{A} \in$

$\hspace{0.5 cm}$ $ \{\emptyset, [Z]^{n(*)}  \}.$
\end{lemma}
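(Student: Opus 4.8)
The plan is to produce $\mathcal A$ as (the color-$1$ part of) a two-valued coloring $c\colon [S^*]^{n(*)}\to 2$ defined from the club-sequence $\bar C$, and to arrange $\mathcal A\subseteq [S^*]^{n(*)}$ from the outset. This normalization immediately disposes of $(\beta)$ whenever $Y\cap S^*$ is bounded: a final segment $Z\subseteq Y$ then misses $S^*$, so $[Z]^{n(*)}\cap\mathcal A=\emptyset$. Hence both $(\alpha)$ and $(\beta)$ reduce to configurations lying inside $S^*$, and for $(\beta)$ we may assume $Y\subseteq S^*$.

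The coloring itself I would define by a \emph{walk down} the sequence $\bar C$. Given $s=\{\delta_0<\dots<\delta_{n(*)-1}\}\subseteq S^*$, read off, for each $l<n(*)-1$, a bit recording the position of $\delta_l$ relative to $C_{\delta_{l+1}}$ (for instance whether $\delta_l$ is a limit point of $C_{\delta_{l+1}}$, or the side of $\sup(C_{\delta_{l+1}}\cap\delta_l)$), and let $c(s)$ be a fixed Boolean combination of these bits. The design constraints are exactly the two properties to be verified: the bits must be \emph{stabilizable} along unbounded sets (for $(\beta)$) yet \emph{both values must be forceable} between arbitrary stationary sets (for $(\alpha)$); reconciling these is the crux, and it is precisely hypothesis $(3)(b)$ that makes it possible.

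For $(\beta)$, given unbounded $Y\subseteq S^*$ I would thin it recursively to $Z=\{z_\xi:\xi<\kappa\}$, choosing $z_\xi\in Y$ above all earlier choices and large enough that the relevant pieces of $C_{z_\xi}$ sit above the previously chosen block; since $Y$ is unbounded this process runs for $\kappa$ steps and yields an unbounded $Z$. Along such a $Z$ the position-bits of every $n(*)$-tuple depend only on the isomorphism type of the configuration, not on the actual ordinals, so $c$ is constant on $[Z]^{n(*)}$ (after one more shrinking to fix that common value), giving $[Z]^{n(*)}\cap\mathcal A\in\{\emptyset,[Z]^{n(*)}\}$. Only unboundedness of $Y$ is used here.

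For $(\alpha)$, given stationary $S_0,\dots,S_{n(*)-1}\subseteq S^*$, the engine is hypothesis $(3)(b)$ together with Fodor's lemma. Using $(3)(b)$ I would pass to a club $E$ and to stationary subsets on which a tail of each $C_\delta$ lies in $E$, then press down to freeze the finitely many parameters that the walk consults; this lets me realize the bit-pattern giving color $1$ on a suitable increasing tuple $\alpha_0<\dots<\alpha_{n(*)-1}$ with $\alpha_l\in S_l$. Because the $S_l$ are stationary, hence unbounded above $\alpha_{n(*)-1}$, I can then repeat the argument entirely above $\alpha_{n(*)-1}$ to realize the opposite pattern, producing $\beta_0<\dots<\beta_{n(*)-1}$ with $\beta_l\in S_l$, $\{\alpha_l:l<n(*)\}\in\mathcal A$ and $\{\beta_l:l<n(*)\}\notin\mathcal A$, in the required block ordering. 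The main obstacle is twofold: choosing the position-bits so that \emph{both} color classes are genuinely reachable between any stationary sets (this is where $(3)(b)$ is indispensable, as it prevents the clubs $C_\delta$ from shielding a stationary set from one of the two patterns), and the bookkeeping in this last step that simultaneously secures the memberships $\alpha_l,\beta_l\in S_l$ and the strict ordering $\alpha_0<\dots<\alpha_{n(*)-1}<\beta_0<\dots<\beta_{n(*)-1}$.
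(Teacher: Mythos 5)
There is a genuine gap: your proposal is a programme rather than a proof, because the set $\mathcal{A}$ is never actually defined. You describe a family of candidate ``position bits'' (whether $\delta_l$ is a limit point of $C_{\delta_{l+1}}$, ``the side of $\sup(C_{\delta_{l+1}}\cap\delta_l)$'') and then state that reconciling $(\alpha)$ with $(\beta)$ ``is the crux''; but that reconciliation is the entire content of the lemma, and none of the bits you float is shown to work. Indeed the first one fails for $(\alpha)$: given stationary $S_l$, there is in general no way to find $\alpha_l\in S_l$ below $\alpha_{l+1}$ which is a limit point of $C_{\alpha_{l+1}}$, since $S_l$ is stationary in $\kappa$ but need not meet the club $\lim(C_{\alpha_{l+1}})$ of $\alpha_{l+1}$ no matter how $\alpha_{l+1}$ is chosen. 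The paper's choice is $\mathcal{A}=\{\{\alpha_0<\dots<\alpha_{n(*)-1}\}:\alpha_{n(*)-1}\in S^*$ and $\otp(\alpha_l\cap C_{\alpha_{n(*)-1}})$ is odd for all $l<n(*)-1\}$; two features of this are essential and are absent from your sketch: only the \emph{top} element's club is consulted (so all bits of a tuple are governed by one club), and the bit is the \emph{parity of an order type}, which is constant on each interval between consecutive points of $C_\delta$ and takes both values on consecutive intervals --- this is what lets $(3)(b)$ (a tail of $C_\delta$ lies in the club of common limit points of the $S_l$, for stationarily many $\delta\in S_{n(*)-1}$) produce both an odd-type block and an even-type block of witnesses for $(\alpha)$.

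The one concrete step you do give for $(\beta)$ does not work: you cannot choose $z_\xi\in Y$ ``large enough that the relevant pieces of $C_{z_\xi}$ sit above the previously chosen block,'' because $\bar C$ is fixed in advance and $C_{z_\xi}\cap z_\eta$ is entirely outside your control once $z_\xi$ is picked from $Y$; consequently the assertion that the bits ``depend only on the isomorphism type of the configuration'' is unjustified. The missing mechanism is a stabilization argument: choose $\alpha_i\in Y$ recursively above $\sup_{j<i}\alpha_j$ so as to \emph{minimize} $\sup(C_{\alpha_i}\cap\bigcup_{j<i}\alpha_j)$; on the club $E=\{\delta:\delta=\sup_{j<\delta}\alpha_j\}$ use $(3)(b)$ and this minimality to get stationarily many $\delta\in E\cap S^*$ with $\sup(C_{\alpha_\delta}\cap\delta)<\delta$; then apply Fodor twice to freeze $\alpha^*=\sup(C_{\alpha_\delta}\cap\delta)$ and $\delta^*=\otp(C_{\alpha_\delta}\cap\alpha^*)$. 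For any $\delta<\delta'$ in the resulting stationary set one then computes $\otp(C_{\alpha_{\delta'}}\cap\alpha_\delta)=\delta^*+1$, a single constant, so $[Z]^{n(*)}\cap\mathcal{A}$ is $\emptyset$ or all of $[Z]^{n(*)}$ according to the parity of $\delta^*+1$. Without the minimization, the double pressing-down, and a coloring that reads only the top club, neither half of your argument goes through.
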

\begin{proof}
Let
\begin{center}
$\mathcal{A}=\{ \{\alpha_0, \dots, \alpha_{n(*)-1} \}: \alpha_{n(*)-1}\in S^*$ and  $l<n(*)-1 \Rightarrow otp(\alpha_l \cap C_{\alpha_{n(*)-1}})$ is odd$\}$.
\end{center}
Let's show that $\mathcal{A}$ is as required:

($\alpha$) should be clear; let's prove
($\beta$). So assume  $Y\subseteq \kappa$ is unbounded. So there is $Z_1 \subseteq Y$ of size $\kappa$ such that $Z_1 \subseteq S^*$ or $Z_1\cap S^* =\emptyset.$ If $Z_1\cap S^*=\emptyset,$ then obviously $[Z_1]^{n(*)}\cap \mathcal{A}=\emptyset$ and we are done; so assume $Z_1 \subseteq S^*.$ Define the sequence $\langle \alpha_i: i<\kappa \rangle,$ by induction on $i<\kappa,$ such that:
\begin{enumerate}
 \item [(1)] $\alpha_i\in Z_1, \alpha_i > \sup\{\alpha_j: j<i  \},$
 \item [(2)] $\sup(C_{\alpha_i} \cap \bigcup_{j<i}\alpha_j)$   is minimal.
 \end{enumerate}
Let $E=\{\delta<\kappa: \delta=\sup_{j<\delta}\alpha_j$ is a limit ordinal$   \};$ so that $E$ is a club of $\kappa.$
Set
\[
W_1=\{\delta \in E \cap S^*: \delta < \sup(C_{\alpha_\delta} \cap \delta)            \} \subseteq S^*.
\]
Then $W_1$ is a stationary subset of $\kappa,$ as otherwise we can find a club $C \subseteq E$ which is disjoint from $W_1$
and we get a contradiction with $(3-b)$.
It follows from Fodor's lemma that for some $\alpha^*<\kappa,$ the set
$$W_2=\{ \delta\in W_1: \sup(C_{\alpha_\delta} \cap \delta) =\alpha^* < \delta \}$$
 is stationary. Again by Fodor's lemma, there exists $\delta^* < \kappa$ such that the set
$$W_3=\{ \delta\in W_2: \sup(C_{\alpha_\delta} \cap \alpha^*) =\delta^*  \}$$
is stationary.
Let $Z=\{\alpha_\delta: \delta\in W_3  \}.$ Clearly $Z$ is an unbounded subset of $Y$. We show that $[Z]^{n(*)}\cap \mathcal{A} \in  \{\emptyset, [Z]^{n(*)}  \}$.
Thus suppose that $[Z]^{n(*)}\cap \mathcal{A} \neq \emptyset$.
Let $\delta_0 < \dots < \delta_{n(*)-1} \in W_3$. Then $\alpha_{\delta_{n(*)-1}} \in S^*$ and for $l < n(*)-1$ we have

$\hspace{1.5cm}$ $otp(C_{\alpha_{\delta_{n(*)-1}}} \cap \alpha_{\delta_l}) = otp(C_{\alpha_{\delta_{n(*)-1}}} \cap \alpha^*)+ otp((C_{\alpha_{\delta_{n(*)-1}}}\setminus \alpha^*) \cap \alpha_{\delta_l})$

$\hspace{4.6cm}$ $=\delta^*+otp((C_{\alpha_{\delta_{n(*)-1}}}\setminus \delta_{n(*)-1}) \cap \alpha_{\delta_l})$

$\hspace{4.6cm}$ $=\delta^*+1$ (as $C_{\alpha_{\delta_{n(*)-1}}}\setminus \delta_{n(*)-1}) \cap \alpha_{\delta_l}=\{\alpha^*\}$),

which is  odd. So $\{\alpha_0, \dots, \alpha_{n(*)-1} \} \in \mathcal{A},$ as required.
\end{proof}

\begin{remark}
We can replace $(3-b)$ with $(\alpha)$ $\&$ $ (\beta)$, where:

 $(\alpha)$ For every club $E_1$ of $\kappa,$ there exists a club $E_2 \subseteq E_1$ of $\kappa$, such that for every $\delta\in$

$\hspace{0.5 cm}$ $ S^* \cap E_2,$ we have $\delta=\sup\{ \alpha<\delta: otp(C_\delta\cap \alpha)$ is even$  \}=\sup\{ \alpha<\delta: otp(C_\delta\cap \alpha)$

 $\hspace{0.5 cm}$ is odd$  \},$

 $(\beta)$ There is no increasing continuous sequence $\langle \alpha_i: i<\kappa \rangle$ of ordinals $<\kappa$ such that

$\hspace{0.5 cm}$ $C_{\alpha_{2i+1}} \supseteq \{  \alpha_{2j}: j<i\}$ $($ note that this holds if $\sup\{C_\delta: \delta\in S^*  \}<\kappa).$
\end{remark}
\begin{remark}
We can force the existence of such an $S^*$ and $\bar{C}$ by forcing.
\end{remark}
\begin{theorem}
Assume $\kappa=cf(\kappa)>\aleph_0$, $\forall \alpha<\kappa (|\alpha|^{\aleph_0}<\kappa),$ and let $\mathcal{A} \subseteq [\kappa]^{n(*)}$ be as in the conclusion of Lemma 6.1. Then  for any non-trivial tree $T\subseteq \omega^{\leq n(*)}$, we have $V^{\QQ_{\kappa, \mathcal{A}}}\models$``$C_{T, n(*)}(\kappa)+\neg C^s_{T, n(*)}(\kappa)$''.
\end{theorem}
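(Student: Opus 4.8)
The plan is to prove the two conjuncts separately, reducing each to a criterion from Section~5. By Remark~3.2 together with Remark~5.1 (which licenses $J=[\omega]^{<\omega}$), writing $D_{\mathrm{cl}}$ for the club filter and $D_{\mathrm{cb}}$ for the co-bounded filter on $\kappa$, we have $C^s_{T,n(*)}(\kappa)=C^{D_{\mathrm{cl}}}_{T,n(*)}(\kappa,J)$ and $C_{T,n(*)}(\kappa)=C^{D_{\mathrm{cb}}}_{T,n(*)}(\kappa,J)$, so it suffices to force $\neg C^{D_{\mathrm{cl}}}_{T}$ and $C^{D_{\mathrm{cb}}}_{T}$ over $\QQ_{\kappa,\mathcal{A}}$. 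Both filters carry the relevant $\Delta$-system property by Lemma~5.4: the club filter is normal and concentrates on the limit ordinals, which gives the $\aleph_0$-version needed in Lemma~5.11, while $D_{\mathrm{cb}}$ gives the $\aleph_1$-version for countable sets (using $\forall\alpha<\kappa\,(|\alpha|^{\aleph_0}<\kappa)$) and is moreover $\kappa$-complete, as Lemma~5.12 requires. In both directions the finiteness dictionary of Lemma~5.10(c) turns every assertion about $\bigcap_i a_{\alpha_i,t(i)}$ into one about membership in $\mathcal{A}$, so all that remains is to verify clause~(4) of the pertinent lemma from the two combinatorial properties of $\mathcal{A}$ supplied by Lemma~6.1.

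For the failure of $C^s$ I would follow the proof of Lemma~5.11 with $D=D_{\mathrm{cl}}$ and $Y^*=S^*$, but applied to the matrix $\langle a_{\xi,n}:\xi<\kappa,\ n<\omega\rangle$ with $a_{\xi,n}=a_{\delta_\xi}$, where $\langle\delta_\xi:\xi<\kappa\rangle$ is an increasing continuous enumeration of $S^*$. By Lemma~5.10(c) the intersection along any top node is finite exactly when $\{\delta_{\xi_i}:i<n(*)\}\in\mathcal{A}$, so the column values are immaterial and a stationary set of rows corresponds precisely to a stationary subset of $S^*$; this is what removes the $\omega$-block shift that would otherwise obstruct the canonical matrix. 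Clause~($\alpha$) of Lemma~6.1, fed any such stationary set, returns at once an $n(*)$-tuple whose set lies in $\mathcal{A}$ (a finite intersection, killing the first horn of the $C^s$-dichotomy) and one whose set lies outside $\mathcal{A}$ (an infinite intersection, killing the second horn); non-triviality of $T$ enters only to guarantee a node of length $n(*)$, so that finite intersections occur at all. This yields $\Vdash_{\QQ_{\kappa,\mathcal{A}}}\neg C^s_{T,n(*)}(\kappa)$.

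For the validity of $C$ I would verify clause~(4) of Lemma~5.12 with $D=D_{\mathrm{cb}}$. Given an unbounded $Y$ and countable sets $\langle w_\alpha:\alpha\in Y\rangle$, I would first thin $Y$ to an unbounded $\Delta$-system with root $w^*$ (its private parts $w_\alpha\setminus w^*$ pairwise disjoint) using Lemma~5.4(b); this already delivers clause~(4-a). For clause~(4-b) — that no $u\in\mathcal{A}$ with $u\subseteq\bigcup_{i<n(*)}w_{\alpha_i}$ occupies two distinct private parts — I would invoke clause~($\beta$) of Lemma~6.1, applied to an unbounded set of representatives drawn from the private parts, so as to land on its $\emptyset$-alternative $[Z]^{n(*)}\cap\mathcal{A}=\emptyset$: a straddling $u$ would then be an $n(*)$-set lying in $[Z]^{n(*)}\cap\mathcal{A}$, which is impossible, forcing every such $u$ into a single $w_{\alpha_i}$. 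Feeding this into Lemma~5.12 gives $\Vdash_{\QQ_{\kappa,\mathcal{A}}}C_{T,n(*)}(\kappa)$.

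The step I expect to be the main obstacle is exactly this use of~($\beta$) for clause~(4-b). Property~($\beta$) offers only a dichotomy — $[Z]^{n(*)}\cap\mathcal{A}$ is either empty or all of $[Z]^{n(*)}$ — and its full side is genuinely realised on subsets of $S^*$, so one must choose the representatives so that every straddling configuration falls on the empty side. The delicate configurations are those $u\in\mathcal{A}$ that mix points of the root $w^*$ with points from several private parts and whose maximum — the coordinate that Lemma~6.1 pins into $S^*$ — sits inside a private part; excluding them means arranging the selection so that the order-type parities defining $\mathcal{A}$ cannot all be odd once the private parts have been separated. Securing this compatibility between the club-guessing data $\langle C_\delta:\delta\in S^*\rangle$ and the root of the $\Delta$-system is the crux; after it, the automorphism and genericity bookkeeping internal to Lemma~5.12 goes through unchanged.
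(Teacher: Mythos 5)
Your decomposition is exactly the paper's: the paper's entire proof of this theorem consists of citing Lemmas 5.11 and 6.1($\alpha$) for the failure of $C^s_{T,n(*)}(\kappa)$ and Lemmas 5.12 and 6.1($\beta$) for $C_{T,n(*)}(\kappa)$, and your remarks about which filter carries which $\Delta$-system property, about the choice of $J$, and about why non-triviality of $T$ is needed are all correct. For the first half your plan is essentially sound, with one caveat: the reindexing $a_{\xi,n}=a_{\delta_\xi}$ via the increasing enumeration of $S^*$ silently assumes that this enumeration maps stationary sets to stationary sets, which is not automatic for a non-closed $S^*$. It is cleaner to run Lemma 5.11 with $Y^*=S^*$ itself, so that every positive set arising in the dichotomy is already a stationary subset of $S^*$ and Lemma 6.1($\alpha$) applies verbatim; this also avoids tampering with the $\omega$-block indexing of the canonical matrix.

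The genuine gap is the one you flag yourself: deriving clause (4-b) of Lemma 5.12 from clause ($\beta$) of Lemma 6.1, and the mechanism you sketch for it would not work as stated. Clause ($\beta$) only produces \emph{some} unbounded $Z\subseteq Y$ on which $[Z]^{n(*)}\cap\mathcal{A}$ is empty or full, and the full alternative is genuinely realized --- the proof of 6.1($\beta$) lands on it whenever $Z_1\subseteq S^*$ --- so you cannot simply ``land on the $\emptyset$-alternative''; taking $w_\alpha=\{\alpha\}$ with $Y\subseteq S^*$ already shows that on the full side clause (4-b) fails outright for such a $Z$. Moreover, a straddling $u\in\mathcal{A}$ with $u\subseteq\bigcup_{i<n(*)}w_{\alpha_i}$ need not be a transversal of the private parts: it may place several points in one $w_{\alpha_i}\setminus w^*$ and others in the root, so controlling $[Z]^{n(*)}\cap\mathcal{A}$ for a set $Z$ of one representative per private part says nothing about such $u$. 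What is actually required is a thinning argument tailored to the definition of $\mathcal{A}$ --- using that any $u\in\mathcal{A}$ has $\max(u)\in S^*$ and is determined by the parities $otp(\gamma\cap C_{\max(u)})$, one must arrange $X$ so that no point of $S^*$ inside $\bigcup_i w_{\alpha_i}$ can serve as the top of a set meeting two private parts --- and this interaction between $\bar{C}$, the root $w^*$, and the private parts is precisely what neither your proposal nor, admittedly, the paper's two-line citation spells out. As written, your argument for the conjunct $C_{T,n(*)}(\kappa)$ therefore does not close.
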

\begin{proof}
That $C^s_{T, n(*)}(\kappa)$ fails in $V^{\QQ_{\kappa, \mathcal{A}}}$ follows from Lemmas 5.11 and 6.1$(\alpha)$. Also,
$C_{T,  n(*)}(\kappa)$ holds in $V^{\QQ_{\kappa, \mathcal{A}}}$ by Lemmas 5.12 and 6.1$(\beta)$.
\end{proof}

The following lemma can be proved similar to the proof of Lemma 6.1.
\begin{lemma}
Let $S^*$ and $\bar{C}$ be as in Lemma 6.1, and assume any $\delta\in S^*$ has uncountable cofinality. Then there is $\mathcal{A} \subseteq [\kappa]^{<\omega}$ such that:

$(\alpha)$ If $n(*)< \omega$ and $S_l \subseteq S^*$ is stationary for $l<n(*),$ then we can find $\alpha_l, \beta_l\in S_l, $ for

$\hspace{0.5 cm}$ $l<n(*)$ such that $\alpha_0 < \dots < \alpha_{n(*)-1} < \beta_0 < \dots < \beta_{n(*)-1}$ and $\{\alpha_l: l<n(*)  \}$

$\hspace{0.5 cm}$ $\in \mathcal{A}, \{\beta_l: l< n(*)  \} \notin \mathcal{A}$

$(\beta)$ If $Y\subseteq \kappa$ is unbounded, then for some unbounded subset $Z \subseteq Y$ we have $[Z]^{<\omega}\cap \mathcal{A} \in$

$\hspace{0.5 cm}$ $ \{\emptyset, [Z]^{<\omega}  \}.$
\end{lemma}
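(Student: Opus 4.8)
The plan is to follow the proof of Lemma 6.1 essentially verbatim, keeping the same parity-based recipe but now admitting finite subsets of every size. First I would set
\[
\mathcal{A}=\{w\in[\kappa]^{<\omega}: \max(w)\in S^*\text{ and }\forall\alpha\in w\setminus\{\max w\},\ otp(C_{\max w}\cap\alpha)\text{ is odd}\}.
\]
The point of this formulation is that for each fixed $n(*)$ the slice $\mathcal{A}\cap[\kappa]^{n(*)}$ is exactly the witness produced by Lemma 6.1 for that $n(*)$. Consequently property $(\alpha)$ needs no new argument at all: given stationary $S_0,\dots,S_{n(*)-1}\subseteq S^*$, the tuples $\alpha_0<\dots<\alpha_{n(*)-1}$ and $\beta_0<\dots<\beta_{n(*)-1}$ are produced exactly as in the proof of $(\alpha)$ of Lemma 6.1 (choosing the top element in $S_{n(*)-1}$ and then selecting lower elements of the other $S_l$ lying in intervals of the top's club of the prescribed parity), and since this is done for each $n(*)$ separately the joint statement is immediate.

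The genuinely new content is property $(\beta)$, and the key observation is that membership of a finite set $w$ in $\mathcal{A}$ depends only on the pairs $(\max w,\alpha)$ with $\alpha\in w\setminus\{\max w\}$. Thus it suffices to produce an unbounded $Z\subseteq Y$ that is homogeneous already for \emph{pairs}: either $Z\cap S^*=\emptyset$, in which case no finite subset of $Z$ has its maximum in $S^*$ and $[Z]^{<\omega}\cap\mathcal{A}=\emptyset$; or $Z\subseteq S^*$ and $otp(C_{z'}\cap z)$ is odd for all $z<z'$ in $Z$, in which case every finite $w\subseteq Z$ satisfies the defining condition (singletons lying in $\mathcal{A}$ vacuously, since their maximum is in $S^*$), so $[Z]^{<\omega}\cap\mathcal{A}=[Z]^{<\omega}$. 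This is the mechanism that upgrades fixed-size homogeneity to homogeneity across all finite sizes simultaneously.

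To obtain such a $Z$ I would run the construction of Lemma 6.1 unchanged: thin $Y$ to $Z_1$ of size $\kappa$ with $Z_1\cap S^*=\emptyset$ or $Z_1\subseteq S^*$; in the second case build the increasing sequence $\langle\alpha_i:i<\kappa\rangle$ in $Z_1$ minimizing $\sup(C_{\alpha_i}\cap\bigcup_{j<i}\alpha_j)$, take the club $E$ of closure points, and extract, using hypothesis $(3\text{-}b)$ together with the two pressing-down arguments, a stationary set $W_3$ on which $\sup(C_{\alpha_\delta}\cap\delta)=\alpha^*$ and the order type of the club below $\alpha^*$ is a fixed odd ordinal. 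Setting $Z=\{\alpha_\delta:\delta\in W_3,\ \alpha_\delta>\alpha^*\}$, for any pair $\delta<\delta'$ in $W_3$ one has $\alpha_\delta<\delta'$ while $C_{\alpha_{\delta'}}\cap(\alpha^*,\delta')=\emptyset$, so $otp(C_{\alpha_{\delta'}}\cap\alpha_\delta)$ collapses to that fixed odd value, giving the pair condition.

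I expect the main obstacle to be precisely the uniformity in this last step. Unlike in Lemma 6.1, where only tuples of a single fixed length are tested, here \emph{every} element of $Z$ must be allowed to play the role of the maximum of some tested tuple, and the lower element $\alpha_\delta$ of a pair may sit arbitrarily close to the top $\alpha_{\delta'}$ (with $\delta'$ possibly a limit of $W_3$); so the emptiness of the gap $(\alpha^*,\delta')$, hence the odd order type, must be secured for all pairs at once. This is exactly where the extra hypothesis that every $\delta\in S^*$ has uncountable cofinality is used: it is what lets the regressive map $\delta'\mapsto\sup(C_{\alpha_{\delta'}}\cap\delta')$ be defined on a stationary set (the suprema being attained strictly below the corresponding limit points) so that Fodor's lemma delivers the constant $\alpha^*$ uniformly over all potential tops. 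Once this uniform gap is in hand the pair condition holds throughout $Z$, and the full $[Z]^{<\omega}$ homogeneity follows.
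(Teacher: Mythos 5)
Your proposal is correct and takes exactly the route the paper intends: the paper offers no separate argument for this lemma beyond the remark that it ``can be proved similar to the proof of Lemma 6.1,'' and your definition of $\mathcal{A}$ (the level-by-level union of the Lemma 6.1 witnesses) together with the key observation that membership in $\mathcal{A}$ depends only on the pairs $(\max w,\alpha)$ --- so that the pair-homogeneous unbounded set $Z$ already extracted in the proof of Lemma 6.1 is homogeneous for all finite sizes simultaneously --- is precisely the content of that remark. (The only caveat is that clause $(\alpha)$ must be read with $n(*)\geq 2$, as in Lemma 6.1, since for $n(*)\leq 1$ it is unsatisfiable for any $\mathcal{A}$ whatsoever; with that reading, the fact that your $\mathcal{A}$ contains all singletons from $S^*$ causes no harm.)
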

Finally, we have the following; whose proof is the same as the proof of Theorem 6.4, using Lemma 6.5, instead of Lemma 6.1.
\begin{theorem}
Assume $\kappa=cf(\kappa)>\aleph_0$, $\forall \alpha<\kappa (|\alpha|^{\aleph_0}<\kappa),$ and let $\mathcal{A} \subseteq [\kappa]^{<\omega}$ be as in the conclusion of Lemma 6.5. Then   $V^{\QQ_{\kappa, \mathcal{A}}}\models$``$C(\kappa)$+ For any non-trivial tree $T\subseteq \omega^{<\omega},\neg C^s_{T}(\kappa)$''.
\end{theorem}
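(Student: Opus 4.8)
The plan is to read both principles through the relative form of Definition~3.1. By Remark~3.2, $C^s_T(\kappa)$ is $C^D_T(\kappa,J)$ with $D$ the club filter on $\kappa$, while $C(\kappa)$ is $C^D_T(\kappa,J)$, ranged over all trees $T\subseteq\omega^{<\omega}$, with $D$ the filter of co-bounded subsets of $\kappa$; throughout I take $J=[\omega]^{<\omega}$, which is legitimate by Remark~5.1. Keeping the single family $\mathcal{A}\subseteq[\kappa]^{<\omega}$ of Lemma~6.5 fixed, the statement then splits into two applications of the forcing criteria of Subsection~5.3, exactly as in Theorem~6.4, the one change being that the height $n(*)$ is no longer fixed. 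As a preliminary bookkeeping step I would match $\mathcal{A}$ to the index family $\{\omega\cdot\alpha+n:\alpha<\kappa,\ n<\omega\}$ underlying the generic reals $\lusim{a}_{\alpha,n}$ via the order isomorphism of that family with $\kappa$; since stationarity, unboundedness and the two filters are invariant under this isomorphism, the conclusions $(\alpha)$ and $(\beta)$ of Lemma~6.5 persist on the index set, which is all that the criteria of Subsection~5.3 see, through the intersection computation of Lemma~5.10$(c)$.

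For the failure of $C^s_T(\kappa)$, for every non-trivial $T\subseteq\omega^{<\omega}$, I would apply the negative criterion, Lemma~5.11, with $D$ the club filter, so $D^+$ is the collection of stationary sets. Its hypotheses $(1)$--$(3)$ are immediate, and the club filter has the $\Delta$-system $\aleph_0$-property by the Erdos--Rado lemma, Lemma~5.4$(a)$, since the limit ordinals form a club and $\forall\alpha(|\alpha|^{<\aleph_0}<\kappa)$ holds by regularity. Hypothesis $(4)$, taken with $Y^*=S^*$, is exactly Lemma~6.5$(\alpha)$: given stationary $Y_i\subseteq S^*$ and a node $t$ furnished by the non-triviality of $T$, one locates an increasing tuple whose index set lands in $\mathcal{A}$ and another whose index set avoids $\mathcal{A}$, which are clauses $(4a)$ and $(4b)$. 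Because Lemma~6.5$(\alpha)$ is quantified over every finite $n(*)$, a single $\mathcal{A}$ takes care of all non-trivial $T$ at once.

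For $C(\kappa)$ I would apply the positive criterion, Lemma~5.12, with $D$ the co-bounded filter, which is $\kappa$-complete, checking its hypotheses for the given $\mathcal{A}$ and each $T\subseteq\omega^{<\omega}$. Hypotheses $(1)$--$(3)$ are clear; the substantive point is hypothesis $(4)$. Given a $D$-positive $Y$ and countable sets $\langle w_\alpha:\alpha\in Y\rangle$, the $\Delta$-system lemma for countable families (available since $\kappa$ is regular and $\forall\alpha(|\alpha|^{\aleph_0}<\kappa)$, cf.\ Lemma~5.4$(b)$), followed by a further thinning to align the $<\kappa$ possible order-type patterns below the root, produces the $\Delta$-subsystem of clause $(4a)$; then applying Lemma~6.5$(\beta)$ to the associated index set yields an unbounded $Z$ with $[Z]^{<\omega}\cap\mathcal{A}\in\{\emptyset,[Z]^{<\omega}\}$, which is the coherence clause $(4b)$. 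With $(4)$ in hand, Lemma~5.12 gives $\Vdash_{\QQ_{\kappa,\mathcal{A}}}$``$C^D_T(\kappa,J)$'' for the given $T$, hence $C(\kappa)$.

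The main obstacle, and the reason Lemma~6.5 rather than Lemma~6.1 is needed, is the passage from a single fixed arity to unbounded trees. Lemma~5.12 as stated asks its coherence clause $(4b)$ only of $n(*)$-element sets, which is enough for $T\subseteq\omega^{<n(*)}$; to reach every $T\subseteq\omega^{<\omega}$ one needs that clause for finite sets of all sizes simultaneously, and only the strong homogeneity $[Z]^{<\omega}\cap\mathcal{A}\in\{\emptyset,[Z]^{<\omega}\}$ of Lemma~6.5$(\beta)$ supplies it, as opposed to the $[Z]^{n(*)}$-homogeneity of Lemma~6.1$(\beta)$. The hypothesis that every $\delta\in S^*$ has uncountable cofinality is what makes this uniform homogeneity attainable, since it keeps each countable $w_\alpha$ meeting $C_\delta$ below $\delta$, so that the parity-of-order-type computation defining $\mathcal{A}$ stabilises across all arities. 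The only routine point left is to confirm that the proof of Lemma~5.12 treats $n(*)$ purely as the common arity, so that with $(4b)$ available for every arity it applies to each node $t\in T$ separately and thereby covers unbounded $T$ verbatim.
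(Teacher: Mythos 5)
Your proposal is correct and follows essentially the same route as the paper: the paper's own proof of this theorem simply repeats the two-line argument of Theorem 6.4, deducing the failure of $C^s_T(\kappa)$ from Lemma 5.11 together with Lemma 6.5$(\alpha)$ and the truth of $C(\kappa)$ from Lemma 5.12 together with Lemma 6.5$(\beta)$. Your additional remarks — the identification of $\mathcal{A}$ with the index family $\{\omega\cdot\alpha+n\}$ and the observation that the $[Z]^{<\omega}$-homogeneity of Lemma 6.5$(\beta)$ is exactly what lets the fixed-arity criterion of Lemma 5.12 cover trees of unbounded height — are faithful elaborations of details the paper leaves implicit.
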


School of Mathematics, Institute for Research in Fundamental Sciences (IPM), P.O. Box:
19395-5746, Tehran-Iran.

E-mail address: golshani.m@gmail.com

Einstein Institute of Mathematics, The Hebrew University of Jerusalem, Jerusalem,
91904, Israel, and Department of Mathematics, Rutgers University, New Brunswick, NJ
08854, USA.

E-mail address: shelah@math.huji.ac.il

\end{document}